\newtheorem{theorem}{Theorem}[section]
\newtheorem{lemma}{Lemma}[section]
\newtheorem{corollary}{Corollary}[section]
\newdefinition{rem}{Remark}[section]
\numberwithin{equation}{section}
\def\mi{{\bf i}}
\def\e{\varepsilon}
\begin{document}
\begin{frontmatter}
\title{Reducibility for wave equations of finitely smooth potential with periodic boundary conditions}
\author[SDUJ,SDUW]{Jing Li}
\ead{xlijing@sdu.edu.cn}
\author[FDU]{Yingte Sun\corref{cor}}
\ead{sunyt15@fudan.edu.cn}
\author[SDUJ,SDUW]{Bing Xie}
\cortext[cor]{Corresponding author.} \ead{xiebing@sdu.edu.cn}
\address[SDUJ]{School of Mathematics , Shandong University, Jinan 250100, P.R. China}
\address[SDUW]{School of Mathematics and Statistics, Shandong University, Weihai 264209, P.R. China}
\address[FDU]{School of Mathematics, Fudan University, Shanghai 200433,  P.R. China}

\journal{}

\begin{abstract}
In the present paper, the reducibility is derived for the wave equations with finitely smooth and
time-quasi-periodic potential subjects to periodic boundary conditions.
More exactly, the linear wave equation $u_{tt}-u_{xx}+Mu+\varepsilon (V_0(\omega t)u_{xx}+V(\omega t, x)u)=0,\;x\in \mathbb{R}/2\pi \mathbb{Z}$
can be reduced to a linear Hamiltonian system of a constant coefficient operator which is of pure imaginary point
spectrum set, where $V$ is finitely smooth in
$(t, x)$, quasi-periodic in time $t$ with Diophantine frequency $\omega\in \mathbb{R}^{n},$ and $V_0$ is finitely smooth and
 quasi-periodic in time $t$ with Diophantine frequency $\omega\in \mathbb{R}^{n},$
Moreover, it is proved that the corresponding wave operator possesses the property of pure point
spectra and zero Lyapunov exponent.
\end{abstract}

\begin{keyword}
KAM theory; Reducibility; Quasi-periodic wave operator;
Finitely smooth potential; Periodic boundary conditions; Pure-point spectrum
\par
MSC:35P05; 37K55; 81Q15
\end{keyword}
\end{frontmatter}

\section{Introduction}

In the present paper, we investigate the reducibility of
 \begin{equation}\label{1*}
u_{tt}-u_{xx}+Mu+\varepsilon (V_0(\omega \, t)u_{xx}+V(\omega \, t,x)u)=0,\;\;x\in\mathbb{R}/2\pi \mathbb{Z}.
\end{equation}
To that end, we need the following conditions:

\ \

{\bf Assumption A.} {\it Assume $M>0$ is a constant, and
$V_0,V_1$ are $C^N$-smooth and  quasi-periodic in time $t$ with frequency $\omega\in \mathbb R^n$: that is,
there are hull functions $\mathcal{V}_0(\theta)\in {C}^{N}(\mathbb{T}^{n}, \mathbb{R})$, $\mathcal{V}(\theta,x)\in {C}^{N}(\mathbb{T}^{n}\times [0, 2\pi], \mathbb{R})$
such that
$$ V_0(\omega\, t)=\mathcal{V}_0(\theta)|_{\theta=\omega t}, \  V(\omega\, t,x)=\left.\mathcal{V}(\theta, x)\right|_{\theta=\omega t},
\;\;\mathbb{T}^{n}=\mathbb{R}^{n}/2\pi\mathbb{Z}^{n},$$
where $N>200\, n$.}

\ \

{\bf Assumption B.} {\it
Assume $\omega\in [1, 2]^{n}\subset \mathbb{R}^{n}$ satisfies Diophantine conditions:

\begin{equation}\label{eq3}
|\langle k, \omega \rangle |\geq \frac{\gamma}{|k|^{n+1}},\;\;k\in \mathbb{Z}^{n}\setminus \{0\},
\end{equation}
where $\gamma$ is a constant and $0<\gamma\ll 1$.}

\ \

We recall the reducibility problem for a time dependent linear system
 \begin{equation}\label{1.1}
 \dot x=A(t)x,\; x\in \mathbb{R}^n,
 \end{equation}
 where $A(t)$ is an $n \times n$ real or complex value matrix. If $A(t)$ is time $T$- periodic and continuous, it follows from Floquet theory that there exists a
 continuous time $T$- periodic coordinate change
 \begin{equation}\label{1.2}
 x=P(t)y
 \end{equation}
 such that \eqref{1.1} is changed into a constant system
 \begin{equation}\label{1.3}
 \dot y=By,
 \end{equation}
 where $B$ is an $n \times n$ complex value matrix independent of time $t$. However, there usually does not exist the change \eqref{1.2} such that
  \eqref{1.1} is reduced to \eqref{1.3} when $A(t)$ is time quasi-periodic. See \cite{Eliasson98}. Let us consider a special case:
  $A(t)=\Lambda+\varepsilon Q(t),$ where $\Lambda$ is a constant, $Q(t)$ is time quasi-periodic and $\varepsilon$ is small. The well known KAM (Kolmogorov-Arnold-Moser)
  theory can be applied to this case. See \cite{Bogoljubov1969,Eliasson92,Jorba92,Li-Zhu}, for example. In recent decades, there have been many literatures  dealing with the
  reducibility of time quasi-periodic, infinite dimensional linear systems via KAM technique. One model is the time-quasi-periodic Schr\"{o}dinger operator
  \begin{equation}\label{1.4}
  \mi\,\dot u= (H_{0}+\e W(\omega \, t, x, -\mi\,\nabla u))u,\;\; x\in\mathbb R^d \;\;\mbox{or}\;\;x\in\mathbb T^d=\mathbb R^d/2\pi \mathbb{Z}^{d},
  \end{equation}
  where $H_0=-\triangle +V(x)$ or an abstract self-adjoint (unbounded) operator while the perturbation $W$ is quasi-periodic in time $t$ and it may or may not depend on $x$ or/and $\nabla$. See \cite{Bambusi17arxiv, Bambusi17CMP, Combescure1987Ann, Duclos-Stovicek96, Duclos-Stovicek02, Eliasson98, Wang17nonlinearity}, and the references therein.

Another model is the time-quasi-periodic wave operator or linear wave equation
\begin{equation}\label{1.5}
u_{tt}=(-\triangle+\e V(\phi_0+\omega t,x;\omega))u.
\end{equation}
Up to now, the reducibility of \eqref{1.5} has not been explicitly dealt with. Note that a reducibility procedure has been included in classical KAM for the existence of lower-dimensional invariant tori for infinite dimensional Hamiltonian partial differential equations. It can be implicitly derived from the classical KAM \cite{Chierchia-You,Kuk1,Poschel1996,Way} that
      \eqref{1.5} with $d=1$ and subject to Dirichlet boundary condition or periodic boundary condition can be
  reduced to a constant coefficient equation for ``most
  \footnote{Here the word ``most" means that for a given set $\Pi\subset \mathbb{R}^{n}$ with Lebesgue measure equals to 1,
  there exists a subset $\Pi_{\varepsilon}\subset \Pi$ with measure $\Pi\setminus \Pi_{\varepsilon} \rightarrow 0$ as $\varepsilon \rightarrow 0$ such that
  for ``any $\omega\in\Pi_{\varepsilon}$".}" frequency $\omega,$ provided that $V$ is analytic. For $d=1$ and \eqref{1.5} with a finitely smooth potential $V$ and subject to Dirichlet boundary condition, it has been recently proved that \eqref{1.5} can still
  be reduced to a constant system for ``most" frequency $\omega.$ See \cite{Li}.

\ \

In this paper, we will prove the following reducibility theorem:
\begin{theorem}\label{thm1.1} With Assumptions {\bf A, B}, for any given $0<\gamma\ll 1$,
 there exists an $\epsilon^*$ with $ \;0<\varepsilon^{*}=\varepsilon^{*} (n, \gamma)\ll \gamma,$
and exists a subset $\Pi\subset [1, 2]^{n}$ with
$$\mbox{Measure}\, \Pi\geq 1-O (\gamma^{1/3})$$
such that for any $0<\varepsilon<\varepsilon^{*}$ and for any $\omega \in \Pi,$ there is a quasi-periodic  symplectic change
such that
\begin{equation}\label{eq1}
 u_{tt}-u_{xx}+Mu+\varepsilon(V_0(\omega \, t)u_{xx}+ V(\omega \, t,x)u)=0,\;\;x\in\mathbb{R}/2\pi \mathbb{Z}
\end{equation}
is reduced to a linear Hamiltonian system
\begin{equation}\label{yuan1.7}
\left\{
  \begin{array}{ll}
    \dot{\tilde{q}}=({\Lambda}+\varepsilon \tilde{Q})\tilde{p}, \\
    \dot{\tilde{p}}=-({\Lambda}+\varepsilon \tilde{Q})\tilde{q},
  \end{array}
\right.
\end{equation}
where
$\Lambda=diag \left({\Lambda_{j}}: j=0, 1, 2, \cdots\right),$
${\Lambda_{0}}=\rho\sqrt{M},\;\Lambda_{j}=\rho\sqrt{j^{2}+M}E_{22},$
$\rho$ is a constant close to $1$, $E_{22}$ is a $2\times 2$ unit matrix,
 and
$\tilde{Q}=diag (\tilde{Q}_{i}: \;i=0, 1, 2, \cdots)$ is independent of time with
$\tilde{Q}_{0}\in\mathbb{R}$,
$\tilde{Q}_{i}
$ being a real $2\times 2$ matrix, and $|\tilde{Q}_{i}|\leq C/i,\;i=1, 2, \cdots.$ Here $|\cdot|$ denotes the sup-norm for real matrices.
\end{theorem}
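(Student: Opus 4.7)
The plan is to cast \eqref{eq1} as an infinite-dimensional linear Hamiltonian system, pre-process the unbounded coefficient $V_0(\omega t)u_{xx}$ by a symplectic rescaling, approximate the $C^N$ potentials by analytic ones on shrinking complex strips, and then run a KAM iteration whose convergence is ensured by $N>200\,n$. Setting $v=u_t$ and expanding $(u,v)$ in the real Fourier basis $\{1,\cos jx,\sin jx:j\geq 1\}$ associated with the periodic boundary condition, the unperturbed wave operator $-\partial_x^2+M$ becomes block-diagonal with scalar value $M$ at $j=0$ and a doubled eigenvalue $j^2+M$ on a $2\times 2$ block for each $j\geq 1$, which is exactly the structure of $\Lambda$ in \eqref{yuan1.7}. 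A preliminary time-quasi-periodic symplectic change of the form $(q_j,p_j)\mapsto(\alpha_j(\omega t)q_j,\,\alpha_j(\omega t)^{-1}p_j+\beta_j(\omega t)q_j)$, applied block-wise with $\alpha_j,\beta_j$ determined by $V_0$, absorbs the unbounded $V_0(\omega t)u_{xx}$ term into a rescaling of the dispersion $\lambda_j\to\rho\lambda_j$ with $\rho=\rho(\e)$ close to $1$ (morally the time-average of $\sqrt{1-\e V_0}$); the residual is bounded, time-quasi-periodic, and absorbed into $\e V u$.

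\emph{Analytic smoothing and KAM.} Since $\mathcal V_0,\mathcal V$ are only $C^N$ in $\theta$ and $x$, approximate them by analytic functions $\mathcal V_0^{(\nu)},\mathcal V^{(\nu)}$ on complex strips $|\mathrm{Im}\,\theta|<s_\nu$ with $s_\nu\searrow 0$ geometrically and Jackson-type error $\|\mathcal V-\mathcal V^{(\nu)}\|_{C^0}\leq C s_\nu^N$. At each KAM step $\nu$ one seeks a symplectic change generated by a Hamiltonian $F_\nu$ that removes the off-diagonal (in $j,j'$) and non-autonomous Fourier modes of the current perturbation; this amounts to a family of Sylvester homological equations
\begin{equation*}
-\mi\langle k,\omega\rangle F_{jj'}(k)+\Lambda_j^{(\nu)}F_{jj'}(k)\pm F_{jj'}(k)\Lambda_{j'}^{(\nu)}=Q_{jj'}(k),
\end{equation*}
for the $jj'$-block, solvable under second Melnikov conditions of the form $|\langle k,\omega\rangle\pm(\Lambda_j^{(\nu)}\pm\Lambda_{j'}^{(\nu)})|\geq \gamma_\nu/(1+|k|)^{n+1}$. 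Each step excludes a set of $\omega$'s of measure $\lesssim\gamma_\nu$; choosing $\sum_\nu\gamma_\nu=O(\gamma^{1/3})$ produces the claimed set $\Pi$. The decay $|\tilde Q_i|\leq C/i$ is inherited from the regularising factor $\lambda_j^{-1}\sim j^{-1}$ appearing in the Darboux coordinates on each block.

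\emph{Main obstacle.} Two difficulties interact. First, periodic boundary conditions produce an \emph{exact} double eigenvalue $\lambda_j$ on every $j\geq 1$ block, so the homological equation is genuinely matrix-valued; the second Melnikov conditions must be imposed block-wise and the normal form preserves the $2\times 2$ block structure throughout the iteration rather than diagonalising it. Second, because the potential is only finitely smooth, at each step one pays an analytic-truncation error $\lesssim s_\nu^N$ that must beat both the small-divisor loss $\gamma_\nu^{-p}$ and the Cauchy loss $(s_{\nu-1}-s_\nu)^{-r}$ from differentiating in $\theta$. The hypothesis $N>200\,n$ is precisely what lets a geometrically decreasing pair $(s_\nu,\gamma_\nu)$ yield a super-exponentially decreasing perturbation, so that the infinite composition of symplectic changes converges in a $C^0$-topology in $(t,x)$ to the reducing transformation. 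Verifying this balance, together with the matrix-valued second Melnikov estimates, is the technical core of the argument.
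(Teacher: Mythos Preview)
Your overall architecture---Hamiltonian formulation, analytic smoothing of the $C^N$ data, KAM iteration with block $2\times2$ homological equations forced by the periodic boundary condition---matches the paper. The gap is in your pre-processing of the unbounded term $\varepsilon V_0(\omega t)u_{xx}$.

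A block-wise symplectic rescaling $(q_j,p_j)\mapsto(\alpha(\omega t)q_j,\alpha(\omega t)^{-1}p_j+\beta(\omega t)q_j)$ with $\alpha=(1-\varepsilon V_0)^{-1/4}$ symmetrises the system to
\[
\dot q_j=a_0(\theta)\sqrt{\lambda_j}\,p_j+\text{(lower)},\qquad
\dot p_j=-a_0(\theta)\sqrt{\lambda_j}\,q_j+\text{(lower)},
\]
with $a_0(\theta)=\sqrt{1-\varepsilon V_0(\theta)}$, but it does \emph{not} make the leading coefficient time-independent. The residual $(a_0(\theta)-\rho)\sqrt{\lambda_j}$ is still of order $\varepsilon\sqrt{\lambda_j}$, hence unbounded in $j$; it cannot be ``absorbed into $\varepsilon Vu$'', and the subsequent KAM scheme, which requires a bounded perturbation of a constant normal form, would not start. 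No shear in the $p$-variable fixes this, and the gauge $z_j\mapsto e^{\mi\sqrt{\lambda_j}\,b(\theta)}z_j$ that formally works is an unbounded phase-space map.

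The missing ingredient is a \emph{reparametrisation of the angle variable}: a diffeomorphism of $\mathbb{T}^n$ of the form $\vartheta=\theta+\omega\,a(\theta)$, with $a$ obtained by solving
\[
\rho\bigl(1+\omega\!\cdot\!\partial_\theta a(\theta)\bigr)=a_0(\theta),\qquad
\rho=\frac{1}{(2\pi)^n}\int_{\mathbb{T}^n}a_0(\theta)\,d\theta,
\]
which is possible precisely because $\omega$ is Diophantine (Assumption~B). In the new angle $\vartheta$ the leading coefficient is literally the constant $\rho$, and only then is the remaining perturbation bounded. This step also costs a fixed amount of smoothness (the paper loses about $2n+3$ derivatives here), which must be budgeted into your $N>200n$ balance. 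Once this is in place, the rest of your outline---Jackson--Moser--Zehnder smoothing, matrix homological equations via $1\otimes\Lambda_i-(\Lambda_j)^T\otimes1$, and the $1/j$ decay of $\tilde Q_j$ coming from the $\lambda_j^{-1/4}\cdot\lambda_j^{-1/4}$ weights---is essentially the paper's route.

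A smaller point: the measure loss at step $m$ is $O(\gamma_m^{1/3})$, not $O(\gamma_m)$; with $\gamma_m=\gamma/2^m$ this sums to $O(\gamma^{1/3})$. Your bookkeeping ``choose $\sum\gamma_\nu=O(\gamma^{1/3})$'' would force the small divisors themselves to be of size $\gamma^{1/3}$, which then feeds back badly into the KAM convergence estimate.
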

The
more exact statement of Theorem 1.1 can be found in Theorem 2.1 in Section 2. From Theorem
1.1, the following two corollaries can be obtained.
\begin{corollary}\label{col1}
With Assumptions {\bf A, B}, for $\omega\in \Pi$ and $0<\varepsilon<\varepsilon^{*},$ the wave operator
\[\mathcal L \, u(t,x)=(\partial_t^2-\partial_x^2+M+\varepsilon (V_0(\omega\, t)\partial_x^2+V(\omega\, t,x))u(t,x),\;\; x\in\mathbb{R}/2\pi \mathbb{Z} \]
is of pure point spectrum property and of zero Lyapunov exponent.
\end{corollary}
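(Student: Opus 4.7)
The plan is to use Theorem~\ref{thm1.1} as a black box: once the wave equation has been conjugated to the constant-coefficient Hamiltonian system \eqref{yuan1.7}, both claims reduce to elementary properties of that autonomous system.

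First, I would recast $\mathcal{L}u=0$ as a first-order linear Hamiltonian system on a suitable sequence space. Setting $q=u$, $p=u_t$ and expanding in the spatial eigenbasis of $-\partial_x^{2}+M$ on $\mathbb{R}/2\pi\mathbb{Z}$ yields a quasi-periodically forced Hamiltonian system of exactly the form handled in Theorem~\ref{thm1.1}. That theorem then supplies a quasi-periodic (in $t$) symplectic change $(q,p)\mapsto(\tilde q,\tilde p)$ converting the system into \eqref{yuan1.7}; both the change and its inverse are uniformly bounded in the energy norm, a property recorded in the sharper Theorem~2.1.

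Second, I would analyze \eqref{yuan1.7} directly. For $\varepsilon$ small the block-diagonal operator $A:=\Lambda+\varepsilon\tilde Q$ is self-adjoint and strictly positive, because the blocks $\Lambda_j=\rho\sqrt{j^{2}+M}\,E_{22}$ dominate the $O(1/j)$ perturbation $\varepsilon\tilde Q_j$. Diagonalizing $A$ block by block produces an orthonormal eigenbasis $\{w_j^{\pm}\}$ with positive eigenvalues $\{\mu_j^{\pm}\}$, and the solutions of $\dot{\tilde q}=A\tilde p$, $\dot{\tilde p}=-A\tilde q$ are explicit superpositions of the harmonic modes $(\tilde q,\tilde p)(t)=(\cos(\mu_j^{\pm}t)\,w_j^{\pm},\,-\sin(\mu_j^{\pm}t)\,w_j^{\pm})$. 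In particular the flow is an isometry of the energy norm, and every orbit is almost periodic.

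Zero Lyapunov exponent follows immediately: since the change of variables is bounded with bounded inverse uniformly in $t$, the energy norm of any solution $u(t)$ of $\mathcal{L}u=0$ is comparable to the constant energy of the reduced solution, so $\|u(t)\|$ is bounded and $\limsup_{t\to\infty}(\log\|u(t)\|)/t=0$. For pure point spectrum, pulling back the normal modes through the quasi-periodic symplectic change produces $(n+1)$-frequency quasi-periodic Floquet-type eigenfunctions of $\mathcal{L}$, which form a complete orthonormal system in the natural Floquet phase space; hence the quasi-energy operator associated with $\mathcal{L}$ has pure point spectrum. The only matters requiring care are the bookkeeping of the transformation norms and the completeness of the pulled-back basis; these are routine, since all the substantive KAM work already sits inside Theorem~\ref{thm1.1}.
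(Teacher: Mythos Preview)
Your approach is correct and is precisely the argument the paper has in mind. In fact the paper does not give a separate proof of Corollary~\ref{col1} at all: it simply states that the two corollaries ``can be obtained'' from Theorem~\ref{thm1.1}, and the only trace of an argument is a commented-out line in Section~11 remarking that the reduced constant-coefficient operator ``possesses the property of pure point spectra and zero Lyapunov exponent.'' Your proposal fills in exactly this gap along the expected route.

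One small point worth making explicit in your write-up: when you assert that $A=\Lambda+\varepsilon\tilde Q$ is self-adjoint, you need each block $\tilde Q_j$ to be symmetric. Theorem~\ref{thm1.1} as stated only says ``real $2\times2$ matrix,'' but the symmetry follows from the iterative construction (condition~(ii) of $(A1)_\nu$ in the Iterative Lemma records that the $\mu_j^{(i)}$ are real \emph{symmetric} matrices, and the limit $\tilde Q_j=\sum_i\varepsilon_i\mu_j^{(i)}/\varepsilon$ inherits this). With that in hand, your block-diagonalization, the isometry of the reduced flow, and the pull-back of the normal modes to Floquet eigenfunctions all go through as you outline.
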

\begin{corollary}\label{col2}
With Assumptions {\bf A, B}, for any $\omega\in \Pi$ and $0<\varepsilon<\varepsilon^{*},$ there exists a unique solution
$u(t, x)$ with initial values $(u(0, x), u_{t}(0, x))=(u_{0}(x), v_{0}(x))\in\mathcal{H}^{N}\times \mathcal{H}^{N},$ which is
almost-periodic in time and
$$\frac{1}{C}(\|u_{0}\|_{\mathcal{H}^{N}}+\|v_{0}\|_{\mathcal{H}^{N}})\leq \|u(t)\|_{\mathcal{H}^{N}}+\|u_{t}(t)\|_{\mathcal{H}^{N}}\leq C(\|u_{0}\|_{\mathcal{H}^{N}}+\|v_{0}\|_{\mathcal{H}^{N}}),$$ where $C>0$ is a constant, $\mathcal{H}^{N}=\mathcal{H}^{N}(\mathbb{T}^{n})$
is the usual Sobolev space.
\end{corollary}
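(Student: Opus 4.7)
The plan is to deduce Corollary~\ref{col2} from Theorem~\ref{thm1.1} by solving the reduced constant-coefficient system explicitly and then transporting the conclusion back through the symplectic change of variables. I first rewrite \eqref{eq1} as a first-order Hamiltonian system in $(q,p)=(u,u_t)$ with data $(u_0,v_0)\in\mathcal{H}^{N}\times\mathcal{H}^{N}$, and apply Theorem~\ref{thm1.1} to obtain a quasi-periodic-in-time symplectic change of variables $(q,p)=\Psi(\omega t)(\tilde q,\tilde p)$ conjugating the system to the block-diagonal system~\eqref{yuan1.7}. Existence and uniqueness of $u$ then follow by solving~\eqref{yuan1.7} in the new coordinates and pulling back through the linear invertible $\Psi(\omega t)$.

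Because $\Lambda+\varepsilon\tilde Q$ is block-diagonal in the spatial Fourier basis of $\mathbb{R}/2\pi\mathbb{Z}$, each block of~\eqref{yuan1.7} is a finite-dimensional linear ODE with constant, real, symmetric coefficients whose eigenvalues are the purely imaginary numbers $\pm\mi\mu_j$, where $\mu_j=\rho\sqrt{j^2+M}+O(\varepsilon/j)$. The flow of each block is an orthogonal rotation, so $(\tilde q(t),\tilde p(t))$ is an explicit superposition of $\cos(\mu_j t)$ and $\sin(\mu_j t)$ terms; this is Bohr almost-periodic in $t$ with frequency module generated by $\{\mu_j:j\geq 0\}$. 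Since each block preserves the Euclidean norm of its projection and the Sobolev weights $(1+j^{2})^{N}$ are time-independent, one obtains the exact conservation
\begin{equation*}
\|\tilde q(t)\|_{\mathcal{H}^{N}}+\|\tilde p(t)\|_{\mathcal{H}^{N}}\equiv \|\tilde q(0)\|_{\mathcal{H}^{N}}+\|\tilde p(0)\|_{\mathcal{H}^{N}}.
\end{equation*}

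To return to the original variables, I would use that $\Psi(\omega t)$ is, uniformly in $t$, a bounded invertible linear operator on $\mathcal{H}^{N}\times\mathcal{H}^{N}$ with bounds of the form $1+O(\varepsilon)$; this is a standard byproduct of the KAM construction behind Theorem~\ref{thm1.1}. Composing with the conservation law above yields
\begin{equation*}
\tfrac{1}{C}\bigl(\|u_0\|_{\mathcal{H}^{N}}+\|v_0\|_{\mathcal{H}^{N}}\bigr)\leq \|u(t)\|_{\mathcal{H}^{N}}+\|u_t(t)\|_{\mathcal{H}^{N}}\leq C\bigl(\|u_0\|_{\mathcal{H}^{N}}+\|v_0\|_{\mathcal{H}^{N}}\bigr).
\end{equation*}
The almost-periodicity of $(u,u_t)$ then follows because a Bohr almost-periodic function valued in $\mathcal{H}^{N}\times\mathcal{H}^{N}$, composed with a quasi-periodic family of bounded operators on the same space, is again Bohr almost-periodic, with frequency module generated by $\{\omega_1,\ldots,\omega_n\}\cup\{\mu_j:j\geq 0\}$.

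The main obstacle is not the explicit integration but the verification of the auxiliary fact that the limiting transformation $\Psi$ supplied by Theorem~\ref{thm1.1} is continuous from $\mathbb{T}^{n}$ into the bounded operators on $\mathcal{H}^{N}\times\mathcal{H}^{N}$, with uniform-in-$\theta$ invertibility bounds. This is where the smoothness hypothesis $N>200n$ enters: at each KAM step one must control the loss of regularity incurred by solving the cohomological equations under small Diophantine divisors, and $N>200n$ is chosen precisely so that the infinite composition of such steps still acts boundedly on $\mathcal{H}^{N}\times\mathcal{H}^{N}$ with bounds summing to $1+O(\varepsilon)$. Once this is in place, the explicit ODE integration, the Bohr almost-periodicity of the superposition, and the pullback to $(u,u_t)$ are routine.
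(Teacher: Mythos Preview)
The paper does not give an explicit proof of this corollary, stating only that it ``can be obtained'' from Theorem~\ref{thm1.1}; your proposal supplies exactly the natural derivation the authors have in mind. One remark: what you flag as ``the main obstacle'' --- the uniform-in-$\theta$ boundedness and invertibility of the conjugating map on $\mathcal{H}^{N}\times\mathcal{H}^{N}$ --- is in fact already part of the conclusion of the sharper Theorem~2.1 (item (iii) gives $\|\Phi(\omega t)-id\|_{h_N\to h_N}\le C\varepsilon$), so no additional work is required there beyond invoking that statement.
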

\begin{rem}\label{rem1.1}
Since  $V_0(\omega t)\partial_{xx}$ appears in \eqref{1*}, the perturbation is unbounded one. This kind of unbounded perturbation, which is of the highest unboundedness, can come from the linearization of some quasi-linear perturbation. For quasi-linear Kdv equations and quasi-linear Schr\"{o}dinger equations, there has been a progress about KAM theory 
\cite{Baldi,Baldi2016, Baldi16mKdv, Giuliani, Feola2016, Feola2015, Riccardo Montalto1, Berti2017, Baldi17, Roberto}. It is still an open problem whether or not there exists KAM theory for quasi-linear wave equations. In the present paper, the potential $V_{0}(\omega t)$ in \eqref{1*} does not depend on the space variable $x$. We find that the methods of Baldi-Berti-Montalto \cite{Baldi,Riccardo Montalto1,Riccardo Montalto} and Roberto-Michela \cite{Roberto} is still valid for the $V_{0}(\omega t)$ in \eqref{1*}.

\end{rem}
\begin{rem}
Here we would like to compare the results of Theorem \ref{thm1.1} with some existent results. As mentioned before, without $V_0(\omega t)$, when $d=1$ and the potential $V$ is
analytic, the reducibility of \eqref{1.5} can be implicitly derived from the classical KAM theorems. However, there are some differences between the analytic potential $V$
 and the finitely smooth one, not to mention the existence of $V_0$. In this paper, by several times elegant variable and symplectic changes, the wave equation \eqref{1*} can be written as a linear Hamiltonian system with Hamiltonian
 $$H=\langle\widetilde{{\Lambda}} z,\overline{z}\rangle+\varepsilon \left[\langle \widetilde{R}^{zz}(\theta)z, z\rangle
+\langle \widetilde{R}^{z\overline{z}}(\theta)z, \overline{z}\rangle+\langle \widetilde{R}^{\overline{z}\overline{z}}(\theta)\overline{z}, \overline{z}\rangle\right].$$
  See \eqref{eq21} for more details. The basic task is to search a series of symplectic coordinate changes to eliminate the perturbations $\widetilde{R}^{zz}(\theta),$  $\widetilde{R}^{z\overline{z}}(\theta)$  and $\widetilde{R}^{\overline{z}\overline{z}}(\theta)$ except the averages of the diagonal of $\widetilde{R}^{z\overline{z}}(\theta).$ To this end, the symplectic coordinate changes are the time-1 map of the flow for the Hamiltonian $\varepsilon F$ where $F$ is of the form
 $$F=\langle F^{zz}(\theta)z, z\rangle+\langle F^{z \overline{z}}(\theta)z, \overline{z}\rangle+\langle F^{\overline{z}\,\overline{z}}(\theta)\overline{z}, \overline{z}\rangle.$$
 \begin{itemize}
   \item  When the potential $V(\theta)\;(\theta=\omega t)$ is analytic in some strip domain $|\text{Im} \theta|\leq s_{\nu}^{*},$ (where $\nu$ is the KAM iteration step), the perturbations
 $\widetilde{R}^{zz}(\theta),$ $\widetilde{R}^{z\overline{z}}(\theta)$ and $\widetilde{R}^{\overline{z}\overline{z}}(\theta)$ are also analytic in $|\text{Im} \theta|\leq s^{*}_{\nu}.$ An important fact in this analytic
 case is that $s^{*}_{\nu}$'s have a uniform non-zero below bound:
 $$s_{\nu}^{*}\geq \frac{s_{0}}{2},\;s_{0}>0,\;\;\mbox{for all}\;\;\nu=1, 2, \cdots .$$
   \item When the potential $V(\theta)$ is finitely smooth of order $N,$ by using Jackson-Moser-Zehnder approximate lemma, we can still make sure that $\widetilde{R}^{zz}(\theta),$ $\widetilde{R}^{z\overline{z}}(\theta)$ and $\widetilde{R}^{\overline{z}\overline{z}}(\theta)$ are analytic in $|\text{Im} \theta|\leq s_{\nu}$ at the $\nu-$ th KAM step. However, the strip width $s_{\nu}$'s have no non-zero below bound. Actually,  $s_{\nu}$ goes to zero very rapidly:
       $$s_{\nu}=\varepsilon_{\nu+1}^{1/N},\;\;\varepsilon_{\nu}=\varepsilon^{(4/3)^{\nu}},\;\;\nu=1,2, \cdots.$$
 \end{itemize}
\begin{itemize}
    \item For the analytic case, we can prove the Hamiltonian $\varepsilon F=O(\varepsilon_{\nu})$ at the $\nu-$th KAM
    step, because $s_{\nu}^{*}\geq \frac{s_{0}}{2}.$ It follows immediately that the new perturbation is $\{\varepsilon F, \varepsilon R\}=O(\varepsilon_{\nu}^{2})=O(\varepsilon_{\nu+1}).$
    \item For the finitely smooth case, the situation is much more complicated. At this case, we find
    $\varepsilon F=O(\varepsilon_{\nu}^{1-\frac{2(3n+4)}{N}})$  at the $\nu-$th KAM step. Thus, for the finitely smooth potential $V\in C^N$, the new perturbation is
    $\{\varepsilon F, \varepsilon R\}=O(\varepsilon_{\nu}^{2-\frac{2(3n+4)}{N}}).$ In order to guarantee the quadratic convergence of the KAM iterations, that is, $O(\varepsilon_{\nu}^{2-\frac{2(3n+4)}{N}})=O(\varepsilon_{\nu}^{4/3})=O(\varepsilon_{\nu+1}),$ it is necessary to assume the smoothness order $N>>1.$ It is enough to assume $N>200 n.$ Clearly, this is not sharp. In this paper, We do not pursuit the lowest smoothness for the potential $V$.
\end{itemize}
\end{rem}

\begin{rem}
The reducibility of \eqref{1*} with finitely smooth potential $V$ subject to Dirichlet boundary condition has been derived in a recent paper \cite{Li}.
However, the results on the reducibility between Dirichlet boundary condition and periodic boundary condition are different. For Dirichlet boundary condition,
the eigenvalues $\lambda_{j}\;(j=1, 2, \cdots)$ are simple. Thus, we can reduce the Hamiltonian
$$H=\langle \widetilde{{\Lambda}} z, \overline{z}\rangle+
\varepsilon (\langle \widetilde{R}^{zz}(\theta)z, z\rangle+\langle \widetilde{R}^{z\overline{z}}(\theta)z, \overline{z}\rangle+\langle \widetilde{R}^{\overline{z}\overline{z}}(\theta)\overline{z}, \overline{z}\rangle)$$
to $$H_{\infty}=\langle \widetilde{\widetilde{\Lambda}}z, z\rangle,$$
where $\widetilde{\widetilde{\Lambda}}=diag(\widetilde{\lambda}_{j}: j=1, 2, \cdots)$ and
$\widetilde{\lambda}_{j}=\sqrt{j^{2}+M}+\xi_{j}.$ Moreover, \eqref{1*} can be reduced to
$$u_{tt}-u_{xx}+M_{\xi}u=0,$$ where $M_{\xi}$ is a Fourier multiplier.
However, for periodic boundary condition, the eigenvalues $\lambda_{j}\;(j=0, 1, \cdots)$ are double:
$$\lambda_{0}^{\sharp}=1,\;\;\lambda_{j}^{\sharp}=2,\;\;j=0, 1, \cdots.$$
In this case, the Hamiltonian $H$ can be reduced to
$$H_{\infty}=\langle ({\Lambda}+\varepsilon \widetilde{Q})u, \overline{u}\rangle,$$
where ${\Lambda} $ and $\widetilde{Q}$ are matrices defined as \eqref{yuan1.7}, $u$ is a vector defined as \eqref{2*} . Although we can still get some dynamical
behaviour from this reducibility, \eqref{1*} can not be reduced to a linear wave
equation with a Fourier multiplier as in Dirichlet boundary condition.
\end{rem}
\begin{rem}
Since $\lambda_{j}^{\sharp}=2,$ the homological equations are no longer scalar. For example, in order to eliminate the term
$\langle R^{u\overline{u}}(\theta)u, \overline{u}\rangle$ (see \eqref{eq211}-\eqref{eq22.3} for more details), the homological equations have the form:
\begin{equation}\label{1.9}
\omega\cdot\partial_{\theta} F-\mi (\Lambda F-F\Lambda)=R,
\end{equation}
where $F=F(\theta)$ is the unknown matrix of order 2, $\Lambda$ is a $2\times 2$ constant matrix, $R=R(\theta)$ is known matrix of order 2. It is more complicated to
find the solution of this matrix equation \eqref{1.9} than that of scalar
homological equations. In this case, the delicate small divisor problem becomes
one dealing with the inverse of the matrix
\begin{equation}\label{1.10}
A:=-\langle k, \omega\rangle(1\otimes 1)+1 \otimes \Lambda-\Lambda\otimes 1
\end{equation}
(see \eqref{eq7.41} for more details). A usual method dealing with
\eqref{1.10} is to investigate $\partial_{\omega}^{4}\det A.$
See \cite{Bogoljubov1969} and \cite{Chierchia-You}, for example. In
the present paper, we use the variation principle of eigenvalues to
deal with the inverse $A^{-1}$. The advantage of the variation principle
of eigenvalues is that the method dealing with scalar small divisor
problem \cite{Poschel1996} can be recovered.
\end{rem}

\begin{rem} \label{rem2}
In \cite{Massimiliano-Berti}, it is proved that there is a quasi-periodic solution for any $d$-dimensional nonlinear wave
equation with a quasi-periodic in time nonlinearity,
\[u_{tt}-\Delta u-V(x)\, u=\varepsilon f(\omega \, t, x, u),\;\; x\in \mathbb T^d,\]
where the multiplicative potential $V$ is in $C^q(\mathbb T^d;\, \mathbb R),\; \omega\in\mathbb R^n$ is a non-resonant frequency vector and $f\in C^q(\mathbb T^n\times \mathbb T^d\times\mathbb R;\, \mathbb R)$.
Because of the application of multi-scale-analysis, it is not clear whether the obtained quasi-periodic solution is linear stable and has zero Lyapunov exponent. As a corollary  of Theorem \ref{thm1.1},  we can prove that the quasi-periodic solution by \cite{Massimiliano-Berti} is linear stable and has zero Lyapunov exponent, when $d=1$.
\end{rem}
\begin{rem} \label{rem3}
When $d>1,$ it is a well-known open problem that \eqref{1.5} subject to Dirichlet or periodic boundary condition is reduced to a linear Hamiltonian system with a constant coefficient
linear operator. See the series of talks by L.H. Eliasson \cite{1,2,3}.  Also see a recent paper \cite{Riccardo Montalto} where the perturbation is a finite rank operator.
\end{rem}

This paper is organized as follows. In Section 2, we redescribe
Theorem 1.1 as Theorem 2.1. In Section 3-10, to prove the main results of the paper, some
preliminary work and many lemmas will be given. The proof of Theorem 2.1 is in the last
section.


\section{Passing to Fourier coefficients}

Consider the differential equation:
\begin{equation}\label{eq4}
\mathcal{L}u=u_{tt}-u_{xx}+Mu+\varepsilon(V_0(\theta)u_{xx}+V(\omega t, x)u)=0
\end{equation}
subject to the boundary condition

\begin{equation}\label{eq5}
u(t,x)=u(t,x+2k\pi),\;\;k\in \mathbb{Z}.
\end{equation}
It is well-known that the Sturm-Liouville problem

\begin{equation*}\label{eq6}
-y''+My=\lambda y,\;\;\;'=\frac{d}{dx},\;\;x\in \mathbb{R}/2\pi\mathbb{Z}
\end{equation*}
has the eigenvalues and eigenfunctions, respectively,

\begin{equation*}\label{eq8}
\lambda_{k}=k^{2}+M,\;\;\;\;k\in \mathbb{Z},
\end{equation*}
\begin{equation*}\label{eq9}
\phi_{k}(x)=e ^{\mi\, kx},\;\;\;\;k\in \mathbb{Z}.
\end{equation*}
Set $-\partial_{xx}+M$ as $D$, the wave equation can be seen as
\begin{equation}
u_{tt}=-Du+\varepsilon V_0(\omega t)Du- \varepsilon V_1(\omega t,x)u,
\end{equation}
where $V_1(\omega t,x)=V(\omega t,x)+M V_0(\omega t)$. Let $u_t=v$, we have
\begin{equation}
v_t=-(1-\varepsilon V_0(\omega t))Du- \varepsilon V_1(\omega t,x)u.
\end{equation}
$\mathbf{Step} \ \mathbf{1:}$

Rescale
\begin{equation*}\label{eq091}
\quad \left\{%
\begin{array}{ll}
u=\beta(\theta)|D|^{-\frac{1}{4}}q,\\
\\
v=(\beta(\theta))^{-1}|D|^{\frac{1}{4}}p.\\
\end{array}%
\right.
\end{equation*}
Then
\begin{eqnarray*}\label{eq091}
\left\{%
\begin{array}{ll}
q_t=\frac{1}{\beta^2(\theta)}|D|^{\frac{1}{2}}p-\frac{\omega \cdot \partial_{\theta}\beta(\theta)}{\beta(\theta)}q,\\
\\
p_t=-(1-\varepsilon V_0(\omega t)) \beta^2(\theta)|D|^{\frac{1}{2}}q+\frac{\omega \cdot \partial_{\theta}\beta(\theta)}{\beta(\theta)}p-\varepsilon|D|^{-\frac{1}{4}}\beta^2(\theta) V_1(\omega t,x)|D|^{-\frac{1}{4}}q.\\
\end{array}%
\right.
\end{eqnarray*}
Choose a suitable $\beta(\theta)$, such that $\beta(\theta)=(1-\varepsilon V_0(\theta))^{-\frac{1}{4}}$. Then
$$\frac{1}{\beta^2(\theta)}=(1-\varepsilon V_0(\omega t))\beta^2(\theta)\triangleq a_0(\theta).$$
Also, set $\frac{\omega\cdot\partial_{\theta}\beta(\theta)}{\beta(\theta)}=\varepsilon a_1(\theta)$, $\beta^2(\theta)V_1(\theta,x)=\widetilde{V_1}(\theta,x)$, we have
\begin{equation*}\label{eq092}
 \left\{%
\begin{array}{ll}
q_t=a_0(\theta)|D|^{\frac{1}{2}}p-\varepsilon a_1(\theta)q,\\
\\
p_t=-a_0(\theta)|D|^{\frac{1}{2}}q+\varepsilon a_1(\theta)p-\varepsilon|D|^{-\frac{1}{4}}\widetilde{V_1}(\theta,x)|D|^{\frac{1}{4}}q.\\
\end{array}%
\right.
\end{equation*}
Clearly, we can see $a_0,\tilde{V}_1 \in {C}^{N}(\mathbb{T}^{n}\times [0, 2\pi], \mathbb{R})$ and $a_1 \in {C}^{N-1}(\mathbb{T}^{n}\times [0, 2\pi], \mathbb{R})$.

$\mathbf{Step} \ \mathbf{2:}$

Now we consider the complex variable
$$z=\frac{q-\mathbf{i}p}{\sqrt{2}}, \quad \bar{z}=\frac{q+\mathbf{i}p}{\sqrt{2}}.  $$
Then, we have
\begin{equation}\label{eq093}
\quad \left\{%
\begin{array}{cl}
\omega\cdot \partial_\theta z=\mi a_0(\theta)|D|^{\frac{1}{2}}z- \varepsilon a_1(\theta)\bar{z}+ \varepsilon \mathbf{i}|D|^{-\frac{1}{4}}\frac{\widetilde{V_1}(\theta,x)}{2}|D|^{-\frac{1}{4}}(z+\bar{z}),\\
\\
\omega\cdot \partial_\theta \bar{z}=-\mi a_0(\theta)|D|^{\frac{1}{2}}\bar{z}-\varepsilon a_1(\theta)z-\varepsilon \mathbf{i}|D|^{-\frac{1}{4}}\frac{\widetilde{V_1}(\theta,x)}{2}|D|^{-\frac{1}{4}}(z+\bar{z}).\\
\end{array}%
\right.
\end{equation}
$\mathbf{Step} \ \mathbf{3:}$

Now we introduce a time variable change, a diffeomorphism of the torus $\mathbb{T}^{n}$ of the form
\begin{equation}
\vartheta=\theta+\omega a(\theta), \quad  \theta=\vartheta+\omega \tilde{a}(\vartheta).
\end{equation}
For any function $h(\theta,x)$ and $\tilde{h}(\vartheta,x)$, we introduce  operators $A$ and  $A^{-1}$, where
\begin{equation}
\begin{split}
h(\theta,x)&=(A^{-1} h)(\vartheta,x)=[h](\vartheta,x)=h(\vartheta+\omega \tilde{a}(\vartheta),x),\\
\tilde{h}(\vartheta,x)&=(A \tilde{h})(\theta,x)=\tilde{h}(\theta+\omega {a}(\theta),x).
\end{split}
\end{equation}
Our aim is to rewrite the equation \eqref{eq093} in the new time variable $\vartheta$. Thus, we can
set
\begin{equation}
\begin{split}
 z(\theta,t)&=z(\vartheta+\omega \tilde{a}(\vartheta),x)=[z](\vartheta,x),\\
 a_i(\theta)&=a_i(\vartheta+\omega \tilde{a}(\vartheta))=[ a_i](\vartheta), \ i=0,1, \\
 \widetilde{V_1}(\theta,x)&=\widetilde{V_1}(\vartheta+\omega \tilde{a}(\vartheta),x)=[\widetilde{V_1}](\vartheta,x),\\
 1+\omega\partial_{\theta}a(\theta)&=1+\omega\partial_{\theta}a(\vartheta+\omega \tilde{a}(\vartheta))=[1+\omega\partial_{\theta}a](\vartheta),
\end{split}
\end{equation}
\begin{equation*}\label{eq094}
\mathcal{T}:\quad \left\{%
\begin{array}{cl}
\omega\cdot \partial_\vartheta [z]=\mathbf{i}\frac{[a_0]}{[1+\omega\partial_{\theta}a]}|D|^{\frac{1}{2}}z-\varepsilon \frac{[a_1]}{[1+\omega\partial_{\theta}a]}\bar{z}+\varepsilon \mathbf{i}|D|^{-\frac{1}{4}}\frac{[\widetilde{V_1}(\theta,x)]}{2[1+\omega\partial_{\theta}a]}|D|^{-\frac{1}{4}}([z]+[\bar{z}]),\\
\\
\omega\cdot \partial_\vartheta [\bar{z}]=-\mathbf{i}\frac{[a_0]}{[1+\omega\partial_{\theta}a]}|D|^{\frac{1}{2}}\bar{z}-\varepsilon \frac{[a_1]}{[1+\omega\partial_{\theta}a]}z-\varepsilon \mathbf{i}|D|^{-\frac{1}{4}}\frac{[\widetilde{V_1}(\theta,x)]}{2[1+\omega\partial_{\theta}a]}|D|^{-\frac{1}{4}}([z]+[\bar{z}]).\\
\end{array}%
\right.
\end{equation*}
We want to choose a function $a$ so that $[a_0]$ is proportional to $[1+\omega\partial_{\theta}a]$. Thus, it is enough to solve the equation
\begin{equation}\label{2.9*}
\rho(1+\omega\partial_{\theta}a(\theta))=a_0(\theta), \quad \rho \in \mathbb{R}.
\end{equation}
Integrating on $\mathbb{T}^n$ we fix the value of $\rho$ as
\begin{equation}
\rho=\frac{1}{(2\pi)^n}\int_{\mathbb{T}^n}a_0(\theta)d\theta.
\end{equation}
By \eqref{2.9*}, we get
\begin{equation}
a(\theta)=(\omega\cdot \partial_{\theta})^{-1}[\frac{a_0}{\rho}-1](\theta).
\end{equation}
For notational simplicity, rename $\vartheta$ ,$[z], [\bar{z}],\frac{[a_1]}{[1+\omega\partial_{\theta}a]},\frac{[\widetilde{V_0}(\theta,x)]}{[1+\omega\partial_{\theta}a]}$ as  $\theta$ $z,\bar{z}, b_0, V$. Then, we have
\begin{equation*}\label{eq094}
\mathcal{T}:\quad \left\{%
\begin{array}{cl}
 z_t=\mathbf{i}\rho|D|^{\frac{1}{2}}z-\varepsilon b_0\bar{z}+\varepsilon\mathbf{i}|D|^{-\frac{1}{4}}\frac{V}{2}|D|^{-\frac{1}{4}}(z+\bar{z}),\\
\\
\bar{z}_t=-\mathbf{i}\rho|D|^{\frac{1}{2}}\bar{z}-\varepsilon b_0z-\varepsilon\mathbf{i}|D|^{-\frac{1}{4}}\frac{V}{2}|D|^{-\frac{1}{4}}(z+\bar{z}).\\
\end{array}%
\right.
\end{equation*}

By Sobolev embedding theorem and inverse function theorem, we see $a  \in {C}^{N-2n-2}(\mathbb{T}^{n}\times [0, 2\pi])$ and $\tilde{a} \in \in {C}^{N-2n-2}(\mathbb{T}^{n}\times [0, 2\pi])$. Thus, we can get $b_0,V \in {C}^{N-2n-3}(\mathbb{T}^{n}\times [0, 2\pi])$. In the following section, we renamed $N-2n-3$ as $N$ for notational simplicity.

Make the ansatz
\begin{equation*}\label{eq10}
z(t, x)=\mathcal{S} (z_k)=\sum_{k\in \mathbb{Z}} z_{k}(t)\phi_{k}(x), \quad \bar{z}(t, x)=\mathcal{S} (\bar{z}_k)=\sum_{k\in \mathbb{Z}} \bar{z}_{k}(t)\phi_{k}(x)
\end{equation*}
and
\begin{equation*}\label{eq11}
V(\omega t, x)=\sum_{k\in \mathbb{Z}} v_{k}(\omega t)\phi_{k}(x).
\end{equation*}

Then \eqref{eq4} can be transformed as
\begin{equation}\label{eq16}
\begin{split}
\frac{dz_{k}}{dt}= \mathbf{i} \rho\sqrt{\lambda_{k}}z_{k}-\varepsilon b_0\bar{z}_k+ \mi \varepsilon \sum_{l\in\mathbb{Z}}\sum_{j\in\mathbb{Z}}\frac{c_{jlk}v_{j}}{2\sqrt[4]{\lambda_{k}\lambda_{l}}}(z_{l}+\bar{z}_{l}),\\
\frac{d\bar{z}_{k}}{dt}=-\mathbf{i}\rho \sqrt{\lambda_{k}}\bar{z}_{k}-\varepsilon b_0z_k-\mi \varepsilon \sum_{l\in\mathbb{Z}}\sum_{j\in\mathbb{Z}}\frac{c_{jlk}v_{j}}{2\sqrt[4]{\lambda_{k}\lambda_{l}}}(z_{l}+\bar{z}_{l}),\\
\end{split}
\end{equation}
where
\begin{equation}\label{eq2.11-}
c_{jlk}
=\int_{0}^{2\pi}e^{\mi\, (j+l-k)x}\, dx=
\left\{%
\begin{array}{ll}
 \;\;0,\;j+l-k\neq 0,\\
2\pi,\; j+l-k=0.\\
\end{array}%
\right.
\end{equation}

Endowed a symplectic transformation with $-\mi d z\wedge d \overline{z}$. Thus \eqref{eq16} is changed into
\begin{equation}\label{eq19}
\left\{%
\begin{array}{cl}
&\dot{z}_{k}=\mi\, \frac{\partial H}{\partial \overline{z}_{k}},\;\;k\in\mathbb{Z},\\
\\
&\dot{\overline{z}}_{k}= - \mi\, \frac{\partial H}{\partial z_{k}},\;\;k\in\mathbb{Z},\\
\end{array}%
\right.
\end{equation}
where
\begin{equation}\label{eq20}
H(z,\overline{z})=\sum_{k\in\mathbb{Z}}\rho \sqrt{\lambda_{k}}z_{k}
\overline{z}_{k}+\varepsilon\mi \sum_{k\in\mathbb{Z}}b_0(\frac{\bar{z}^2_k-z^2_k}{2})+\varepsilon \sum_{k\in\mathbb{Z}}\sum_{l\in\mathbb{Z}}\sum_{j\in\mathbb{Z}}
c_{jlk}\frac{v_{j}(\theta)}{2\sqrt[4]{\lambda_{k}\lambda_{l}}}
\left(z_{l}+\overline{z}_{l}\right)\left(z_{k}+\overline{z}_{k}\right).
\end{equation}
For two sequences $x=(x_{j}\in \mathbb{C},\;j\in \mathbb{Z})$,\;
$y=(y_{j}\in \mathbb{C},\;j\in \mathbb{Z}),$
define
$$\langle x, y\rangle=\sum_{j\in \mathbb{Z}}x_{j}y_{j}.$$
Then we can rewrite \eqref{eq20} as follows:
\begin{equation}\label{eq21}
H(z,\overline{z})=\langle\rho \widetilde{\Lambda} z,\overline{z}\rangle+\varepsilon\mi \frac{b_0}{2}\Big( \langle \overline{z},\overline{z}\rangle-\langle z,z\rangle\Big)+ \varepsilon \left[\langle \widetilde{R}^{zz}(\theta)z, z\rangle
+\langle \widetilde{R}^{z\overline{z}}(\theta)z, \overline{z}\rangle+\langle \widetilde{R}^{\overline{z}\overline{z}}(\theta)\overline{z}, \overline{z}\rangle\right],
\end{equation}
where
\begin{equation*}
\widetilde{\Lambda}=diag \left(\sqrt{\lambda_{j}}: j\in \mathbb{Z}\right),\;\;\theta=\omega t,
\end{equation*}
\begin{equation}\label{eq22.11}
\widetilde{R}^{zz}(\theta)=\left(\widetilde{R}^{zz}_{kl}(\theta): k, l\in \mathbb{Z}\right),
\;\; \widetilde{R}^{zz}_{kl}(\theta)=\frac{1}{2}\sum_{j\in \mathbb{Z}}\frac{c_{jlk}v_{j}(\theta)}{\sqrt[4]{\lambda_{k}}\sqrt[4]{\lambda_{l}}},
\end{equation}
\begin{equation}\label{eq22.21} \widetilde{R}^{z\overline{z}}(\theta)=\left(\widetilde{R}^{z\overline{z}}_{kl}(\theta): k, l\in \mathbb{Z}\right),
\;\; \widetilde{R}^{z\overline{z}}_{kl}(\theta)=\sum_{j\in \mathbb{Z}}\frac{c_{jlk}v_{j}(\theta)}{\sqrt[4]{\lambda_{k}}\sqrt[4]{\lambda_{l}}},
\end{equation}
\begin{equation}\label{eq22.31} \widetilde{R}^{\overline{z}\,\overline{z}}(\theta)=\left(\widetilde{R}^{\overline{z}\,\overline{z}}_{kl}(\theta): k, l\in \mathbb{Z}\right),
\;\; \widetilde{R}^{\overline{z}\,\overline{z}}_{kl}(\theta)=\frac{1}{2}\sum_{j\in \mathbb{Z}}
\frac{c_{jlk}v_{j}(\theta)}{\sqrt[4]{\lambda_{k}}\sqrt[4]{\lambda_{l}}}.
\end{equation}
For the sequence $z=(z_{j}\in \mathbb{C},\;j\in \mathbb{Z}),$ we can rewrite $z$ as
\begin{equation}\label{2*}
z=(z_{0}, \,z_{j}, \,z_{-j}:\;j=1, 2,\cdots)\triangleq u=(u_{j}: \;j=0, 1, 2,\cdots),
\end{equation}
where $u_{0}=z_{0}$, $u_{j}=(z_{j}, \,z_{-j})^{T},\;j=1, 2,\cdots.$
Here $(z_{j}, \,z_{-j})^{T}$ denotes the transpose of the vector $(z_{j}, \,z_{-j}).$
Let $\Lambda_{0}=\sqrt{\lambda_{0}},$ $\Lambda_{j}=\left(
                   \begin{array}{cc}
                     \sqrt{\lambda_{j}} & 0 \\
                     0 & \sqrt{\lambda_{-j}} \\
                   \end{array}
                 \right),\;\;j=1, 2, \cdots.$
Note that $\lambda_{j}=\lambda_{-j}=j^{2}+M,\;j=1, 2, \cdots.$
Then $\Lambda_{j}=\sqrt{\lambda_{j}}E_{22},\;j=1, 2, \cdots,$
 where $E_{22}$ is a $2\times 2$ unit matrix. For $u_{j}=(z_{j}, \,z_{-j})^{T}$
 and $\widetilde{u}_{j}=(\widetilde{z}_{j}, \,\widetilde{z}_{-j})^{T}, $
 define $u_{j}\cdot \widetilde{u_{j}}=z_{j}{\widetilde{z}}_{j}+z_{-j}{\widetilde{z}}_{-j},\;j=1, 2, \cdots.$\\
Then we can also rewrite \eqref{eq20} as
\begin{equation}\label{eq211}
\widetilde{H}=\langle\rho \Lambda u,\overline{u}\rangle+\varepsilon\mi \frac{b_0}{2}\Big(\langle \overline{u},\overline{u}\rangle-\langle u,u\rangle\Big)+ \varepsilon \left[\langle R^{uu}(\theta)u, u\rangle
+\langle R^{u\overline{u}}(\theta)u, \overline{u}\rangle+\langle R^{\overline{u}\overline{u}}(\theta)\overline{u}, \overline{u}\rangle\right],
\end{equation}
where
\begin{equation*}
\Lambda=diag \left({\Lambda_{j}}: j=0, 1, 2, \cdots\right),\;\;\theta=\omega t,
\end{equation*}
\begin{equation}\label{eq22.1}
R^{uu}(\theta)=\left(R^{uu}_{kl}(\theta): k, l=0, 1, 2, \cdots\right),\; R^{u\overline{u}}(\theta)=\left(R^{u\overline{u}}_{kl}(\theta): k, l=0, 1, 2, \cdots\right),
\end{equation}
\begin{equation}\label{eq22.2}
R^{\overline{u}\,\overline{u}}(\theta)=\left(R^{\overline{u}\,\overline{u}}_{kl}(\theta): k, l=0, 1, 2, \cdots\right),\;\;R^{uu}_{kl}(\theta)=R^{\overline{u}\,\overline{u}}_{kl}(\theta)=\frac{1}{2}R^{u\overline{u}}_{kl}(\theta),
\end{equation}
where
\begin{equation}\label{eq22.3}
R^{uu}_{kl}(\theta)=\left\{
                      \begin{array}{ll}
                        R_{0,0}(\theta), & {k=l=0;} \\
                        (R_{0,l}(\theta),\;R_{0,-l}(\theta)), & {k=0, \;l=1, 2, \cdots;} \\
                        (R_{k,0}(\theta),\;R_{-k,0}(\theta))^{T}, & {l=0, \;k=1, 2, \cdots;} \\
                        \left(
                          \begin{array}{cc}
                            R_{k,l}(\theta) & R_{k,-l}(\theta) \\
                            R_{-k,l}(\theta) & R_{-k,-l}(\theta) \\
                          \end{array}
                        \right), & {k, l=1, 2, \cdots,}
                      \end{array}
                    \right.
\end{equation}
and
 $$R_{k,l}(\theta)=\frac{1}{2}\sum_{j\in \mathbb{Z}}\frac{c_{jlk}v_{j}(\theta)}{\sqrt[4]{\lambda_{k}}\sqrt[4]{\lambda_{l}}},\;\; k, l\in \mathbb{Z}.$$
Define a Hilbert space $h_{\widetilde{N}}$ as follows:
\begin{equation}\label{eq231}
{h}_{\widetilde N}=\{x=(x_{k}\in\mathbb{C}:k\in \mathbb{Z}): \|x\|_{\widetilde{N}}^{2}=\sum_{k\in \mathbb{Z}}|k|^{2N}|x_{k}|^{2}\}.
\end{equation}
Similarly define a Hilbert space $h_{N}$ as follows:
\begin{equation}\label{eq23}
{h}_{ N}=\{y=(y_{k}:k=0, 1, \cdots): \|y\|_{{N}}^{2}=\sum_{k=0}^{\infty}|k|^{2N}|y_{k}|^{2}\},
\end{equation}
where $y_{0}\in \mathbb{C},$  $y_{k}=(z_{k}, \,z_{-k})^{T},\;z_{k}, \,z_{-k}\in \mathbb{C},\;k=1, 2,\cdots,$ and $|y_{k}|^{2}=|z_{k}|^{2}+|z_{-k}|^{2}.$
In \eqref{eq231} and \eqref{eq23}, we define $|k|^{2N}=1,$ if $k=0.$
For
$z=(z_{0}, \,z_{j}, \,z_{-j}:\;j=1, 2,\cdots)\in {h}_{\widetilde N},\;\;u=(u_{j}: \;j=0, 1, 2,\cdots)\in {h}_{ N},$
where $u_{0}=z_{0}$, $u_{j}=(z_{j}, \,z_{-j})^{T},\;j=1, 2,\cdots$. It can be obtained that
 $$\|u\|_{{N}}=\|z\|_{\widetilde{N}}.$$
%
\\
Recall that $$\mathcal{V}(\theta,x)\in {C}^{N}(\mathbb{T}^{n}\times [0, 2\pi], \mathbb{R}).$$
Note that the Fourier transformation \eqref{eq10} is isometric from $u\in\mathcal{H}^{N}[0, 2\pi]$
to $(u_{k}: k=0, 1, \cdots)\in h_{N},$ where $\mathcal{H}^{N}[0, 2\pi]$ is the usual Sobolev space.

Now we state a lemma, which is used in the next section.

\begin{lemma}
\begin{eqnarray}\label{eq25}
&&\sup_{\theta\in\mathbb{T}^{n}}\|\sum_{|\alpha|\leq N}\partial^{\alpha}_{\theta}J R^{uu}(\theta) J\|_{h_N\to h_N}\leq C,\nonumber\\
&& \sup_{\theta\in\mathbb{T}^{n}}\|\sum_{|\alpha|\leq N}\partial^{\alpha}_{\theta}J R^{u\overline{u}}(\theta) J\|_{h_N\to h_N}\leq C,\\
&& \sup_{\theta\in\mathbb{T}^{n}}\|\sum_{|\alpha|\leq N}\partial^{\alpha}_{\theta}J R^{\overline{u}\,\overline{u}}(\theta) J\|_{h_N\to h_N}\leq C\nonumber,
\end{eqnarray}
where $ ||\cdot||_{h_N\to h_N}$ is the operator norm from $h_N$ to $h_N$, and $\alpha=(\alpha_{1}, \alpha_{2}, \cdots , \alpha_{n}),$ $|\alpha|=|\alpha_{1}|+|\alpha_{2}|+\cdots+|\alpha_{n}|,$ $\alpha_{j}$'s
 are positive integers, and $J=diag (J_{j}: j=0, 1, \cdots),\;J_{0}=\sqrt[4]{\lambda_{0}},\;J_{j}=\sqrt[4]{\lambda_{j}}E_{22}, j=1, 2, \cdots.$
\end{lemma}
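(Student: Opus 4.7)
The plan is to identify $JR^{uu}(\theta)J$ (and its two analogues) as a constant multiple of the infinite matrix representing multiplication by $V(\theta,\cdot)$ on the Sobolev space $\mathcal H^N[0,2\pi]$, and then to exploit the Banach algebra / multiplier property of $\mathcal H^N$ to bound the operator norm in terms of the $C^N$ size of $V$ in both variables. Because the renamings in Section~2 leave $N$ well above $1/2$ and large relative to the derivative loss coming from $\theta$, the estimate will reduce to a routine size check on $V$.

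First I would evaluate $c_{jlk}=2\pi\,\delta_{j+l,k}$ in the definition of $R_{k,l}(\theta)$ so that the sum over $j$ collapses to
\[
R_{k,l}(\theta)=\frac{\pi\,v_{k-l}(\theta)}{\sqrt[4]{\lambda_k}\,\sqrt[4]{\lambda_l}}.
\]
Reading off the $2\times 2$ block $R^{uu}_{kl}(\theta)$ from \eqref{eq22.3} and sandwiching it between $J_k$ on the left and $J_l$ on the right, the factors $\sqrt[4]{\lambda_k}\sqrt[4]{\lambda_l}$ in the denominator cancel exactly (using $\lambda_j=\lambda_{-j}$), leaving a block whose entries are of the form $\tfrac{\pi}{2}v_{\pm k\pm l}(\theta)$. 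Unfolding the grouping $u_k=(z_k,z_{-k})^T$ via the isometry $\|u\|_N=\|z\|_{\widetilde N}$, the resulting operator on $\widetilde h_N\cong\mathcal H^N[0,2\pi]$ is precisely $\tfrac{\pi}{2}$ times the multiplication operator $M_{V(\theta,\cdot)}$ expressed in the Fourier basis $\{e^{\mi kx}\}_{k\in\mathbb Z}$. The identifications for $R^{u\bar u}$ and $R^{\bar u\bar u}$ are the same up to the factors $\pi$ and $\tfrac{\pi}{2}$ prescribed by \eqref{eq22.11}--\eqref{eq22.31}.

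Differentiation in $\theta$ passes through this identification, giving $\partial^\alpha_\theta JR^{uu}(\theta)J\cong \tfrac{\pi}{2}\,M_{\partial^\alpha_\theta V(\theta,\cdot)}$, and similarly for the other two. Since $N\gg 1/2$ after all renamings, $\mathcal H^N[0,2\pi]$ is a Banach algebra and
\[
\|M_f\|_{\mathcal H^N\to\mathcal H^N}\le C\,\|f\|_{C^N_x}
\]
for every $f\in C^N([0,2\pi])$. Composing with the isometries $\mathcal H^N\cong\widetilde h_N\cong h_N$ transfers this bound to the desired $h_N\to h_N$ operator norm.

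Finally, the triangle inequality pulls the sum over $|\alpha|\le N$ outside the operator norm, reducing the lemma to an estimate of the type
\[
\sum_{|\alpha|\le N}\sup_{\theta\in\mathbb T^n}\bigl\|\partial^\alpha_\theta V(\theta,\cdot)\bigr\|_{C^N_x}\le C,
\]
which follows from Assumption~\textbf{A} together with the derivative counting already performed in Section~2 (recall that $V$ descends from $\mathcal V\in C^{N_0}$ with $N_0>200\,n$, and the working $N$ is $N_0-2n-3$). The main step that needs care is the first one: keeping track of the block structure of $R^{uu}_{kl}$, $R^{u\bar u}_{kl}$, $R^{\bar u\bar u}_{kl}$ and of the regrouping $z\leftrightarrow u$ so that the matrix $JR^{\sharp}J$ is correctly matched to the Fourier entries of $M_V$. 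Once that algebraic identification is in place, the analytic content of the lemma is just the Banach algebra estimate applied term by term.
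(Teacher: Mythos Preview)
Your approach is correct and takes a genuinely different, more conceptual route than the paper. The paper proceeds by direct computation: it expands $\bigl\|(\sum_{|\alpha|\le N}\partial^\alpha_\theta \widetilde J\,\widetilde R^{zz}\widetilde J)\,z\bigr\|_{\widetilde N}^2$ as an explicit sum over Fourier indices, uses the convolution constraint coming from $c_{jlk}=2\pi\delta_{j+l,k}$, inserts the auxiliary weight $\gamma_{lj}=(l+j)j/l$, and applies Cauchy--Schwarz in $j$ to separate the $v_j$ factors from the $z_{l+j}$ factors; the resulting factor $\sum_j|j|^{2N}|\partial^\alpha_\theta v_j|^2$ is then bounded by the spatial $C^N$ size of $\partial^\alpha_\theta V$. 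Your argument packages exactly this calculation into the single observation that $JR^{uu}(\theta)J$, unfolded to the $z$-picture, is a constant times the Fourier matrix of multiplication by $V(\theta,\cdot)$, so its $h_N$ operator norm is controlled by the algebra bound $\|M_f\|_{\mathcal H^N\to\mathcal H^N}\le C\|f\|_{C^N_x}$. This makes transparent why the $J$-conjugation is the right thing to do (it cancels the $\lambda^{-1/4}$ weights and exposes the pure multiplication operator), whereas the paper's weighted Cauchy--Schwarz is, in effect, a bare-hands reproof of the Sobolev algebra property. Both arguments terminate at the same requirement: control of $\partial^\alpha_\theta\partial^\beta_x V$ with $|\alpha|\le N$ and $|\beta|\le N$ separately, hence total order up to $2N$. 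The paper invokes $\partial^\alpha_\theta\partial_x^N\mathcal V$ with $|\alpha|\le N$ without comment; your closing sentence defers this to ``the derivative counting in Section~2'', but note that the renaming there only delivers $V\in C^N$ in the \emph{joint} sense, so strictly speaking one must take the working Sobolev index to be at most half the available smoothness for the lemma as stated---a bookkeeping point the paper itself glosses over.
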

\begin{proof}
By \eqref{eq22.1}, \eqref{eq22.2} and \eqref{eq22.3}, we have that
$$\partial_{\theta}^{\alpha}{J}R^{uu}(\theta){J}\triangleq (A^{uu}_{kl}(\theta): k, l=0, 1, \cdots),$$
where $$A^{uu}_{kl}(\theta)=\left\{
                      \begin{array}{ll}
                        \frac{1}{2}\sum_{j\in \mathbb{Z}}c_{j\,0\,0}\partial_{\theta}^{\alpha}v_{j}(\theta), & {k=l=0;} \\
                        (\frac{1}{2}\sum_{j\in \mathbb{Z}}c_{j\,l\,0}\partial_{\theta}^{\alpha}v_{j}(\theta),\;\frac{1}{2}\sum_{j\in \mathbb{Z}}c_{j\,-l\,0}\partial_{\theta}^{\alpha}v_{j}(\theta)), & {k=0, \;l=1, 2, \cdots;} \\
                        (\frac{1}{2}\sum_{j\in \mathbb{Z}}c_{j\,0\,k}\partial_{\theta}^{\alpha}v_{j}(\theta),\;\frac{1}{2}\sum_{j\in \mathbb{Z}}c_{j\,0\,-k}\partial_{\theta}^{\alpha}v_{j}(\theta))^{T}, & {l=0, \;k=1, 2, \cdots;} \\
                        \left(
                          \begin{array}{cc}
                            \frac{1}{2}\sum_{j\in \mathbb{Z}}c_{j\,l\,k}\partial_{\theta}^{\alpha}v_{j}(\theta) & \frac{1}{2}\sum_{j\in \mathbb{Z}}c_{j\,-l\,k}\partial_{\theta}^{\alpha}v_{j}(\theta) \\
                            \frac{1}{2}\sum_{j\in \mathbb{Z}}c_{j\,l\,-k}\partial_{\theta}^{\alpha}v_{j}(\theta) & \frac{1}{2}\sum_{j\in \mathbb{Z}}c_{j\,-l\,-k}\partial_{\theta}^{\alpha}v_{j}(\theta) \\
                          \end{array}
                        \right), & {k, l=1, 2, \cdots.}
                      \end{array}
                    \right.
$$
For any $u=(u_{k}: k=0, 1, \cdots)\in h_{N},$
\begin{equation}\label{eq2.1}
\left(\sum_{|\alpha|\leq N}\partial_{\theta}^{\alpha}{J}R^{uu}(\theta){J}\right)u=\left(
\sum_{k=0}^{\infty}(\sum_{|\alpha|\leq N}A^{uu}_{lk})u_{k}: l=0, 1, \cdots\right).
\end{equation}
Suppose $\widetilde{J}=diag (\sqrt[4]{\lambda_{j}}: j\in \mathbb{Z}).$ Then for any $z=(z_{k}\in \mathbb{C}: k\in {Z})\in h_{\widetilde{N}},$
\begin{equation}\label{eq2.2}
\left(\sum_{|\alpha|\leq N}\partial_{\theta}^{\alpha}\widetilde{J}\,\widetilde{R}^{zz}(\theta)\widetilde{J}\right)z=\left(\frac{1}{2}
\sum_{j\in \mathbb{Z}}\sum_{k\in \mathbb{Z}}C_{jlk}(\sum_{|\alpha|\leq N}\partial_{\theta}^{\alpha}v_{j}(\theta))z_{k}: l\in \mathbb{{Z}}\right).
\end{equation}
A combination of \eqref{eq231}, \eqref{eq23}, \eqref{eq2.1} and \eqref{eq2.2} gives
\begin{eqnarray}\label{eq2}
&&\left\|\left(\sum_{|\alpha|\leq N}\partial_{\theta}^{\alpha}{J}R^{uu}(\theta){J}\right)u\right\|_{N}^{2}\nonumber\\
&=&\sum_{l=0}^{\infty}l^{2N}\left| \sum_{k=0}^{\infty}(\sum_{|\alpha|\leq N}A^{uu}_{lk})u_{k}\right|^{2}\nonumber\\
&=&\sum_{l\in \mathbb{Z}}|l|^{2N}\left| \frac{1}{2}\sum_{k\in \mathbb{Z}}\sum_{j\in \mathbb{Z}}
C_{jlk}(\sum_{|\alpha|\leq N}\partial_{\theta}^{\alpha}v_{j}(\theta))z_{k}\right|^{2}.
\end{eqnarray}
Let $$\gamma_{\,lj}=\frac{( l+j)j}{l}, \;\;\mbox{where} \;\; l, j = 1, 2, \cdots.$$
Note that $$c_{jlk}=
\left\{%
\begin{array}{ll}
 \;\;0,\;j+l-k\neq 0,\\
2\pi,\; j+l-k=0.\\
\end{array}%
\right.$$
By \eqref{eq2}, one has

\begin{eqnarray*}
&&\left\|\left(\sum_{|\alpha|\leq N}\partial_{\theta}^{\alpha}{J}R^{uu}(\theta){J}\right)u\right\|_{N}^{2}\\
&=&\sum_{l\in \mathbb{Z}}|l|^{2N}\left|\frac{1}{2}\sum_{j\in \mathbb{Z}} C_{jl( l+ j)}(\sum_{|\alpha|\leq N}\partial_{\theta}^{\alpha}v_{j}(\theta))z_{ l+ j}\right|^{2}
=\frac{1}{4}\left|\sum_{j\in \mathbb{Z}}c_{j0j}(\sum_{|\alpha|\leq N}\partial_{\theta}^{\alpha}v_{j}(\theta))z_{j}\right|^{2}\\
&&+\frac{1}{4}\sum_{l\in \mathbb{Z}\setminus \{0\}}|l|^{2N}\left|c_{0ll}(\sum_{|\alpha|\leq N}\partial_{\theta}^{\alpha}v_{0}(\theta))z_{l}+\sum_{j\in \mathbb{Z}\setminus \{0\}} C_{jl(l+j)}(\sum_{|\alpha|\leq N}\partial_{\theta}^{\alpha}v_{j}(\theta))z_{l+j}\right|^{2}\\
&\leq & C\left(\sum_{j\in \mathbb{Z}}|j|^{2N}\Big|\sum_{|\alpha|\leq N}\partial_{\theta}^{\alpha}v_{j}(\theta)\Big|^{2}\right)(\sum_{j\in \mathbb{Z}}|j|^{2N}|z_{j}|^{2})+C\sum_{l\in \mathbb{Z}\setminus \{0\}}|l|^{2N}\Big|\sum_{|\alpha|\leq N}\partial_{\theta}^{\alpha}v_{0}(\theta)
\Big|^{2}|z_{l}|^{2}\\
& &+C\sum_{l\in \mathbb{Z}\setminus \{0\}}\Big|\sum_{j\in \mathbb{Z}\setminus \{0\}}\frac{1}{\gamma_{\,lj}^{N}}\cdot \gamma_{\,lj}^{N}|l|^{N}(\sum_{|\alpha|\leq N}\partial_{\theta}^{\alpha}v_{j}(\theta))
z_{ l+ j}\Big|^{2}\\
&\leq&C||z||_{\widetilde{N}}^{2}+ C\sum_{l\in \mathbb{Z}\setminus \{0\}}\left(\sum_{j\in \mathbb{Z}\setminus \{0\}}\frac{1}{\gamma_{\,lj}^{2N}}\right)(\sum_{j\in \mathbb{Z}\setminus \{0\}}|j|^{2N}\Big|\sum_{|\alpha|\leq N}\partial_{\theta}^{\alpha}v_{j}(\theta)
\Big|^{2}|l+j|^{2N}|z_{l+j}|^{2})\\
&\leq &C||z||_{\widetilde{N}}^{2}+C\sum_{j\in \mathbb{Z}\setminus \{0\}}|j|^{2N}\Big|\sum_{|\alpha|\leq N}\partial_{\theta}^{\alpha}v_{j}(\theta)\Big|^{2}\|z\|_{N}^{2}\\
&\leq &C||z||_{\widetilde{N}}^{2}+C\sup_{(\theta,x)\in \mathbb{T}^{n}\times [0, 2\pi]}
\big|\sum_{|\alpha|\leq N}\partial_{\theta}^{\alpha}\partial_{x}^{N}\mathcal{V}(\theta, x)\big|\|z\|_{N}^{2}\\
&\leq &C \|z\|_{\widetilde{N}}^{2}=C \|u\|_{{N}}^{2},
\end{eqnarray*}
where $C$ is a universal constant which might be different in different places.  It follows that
\begin{equation}\label{eq2.26}
\sup_{\theta\in\mathbb{T}^{n}}\|\sum_{|\alpha|\leq N}\partial^{\alpha}_{\theta}J R^{u{u}}(\theta) J\|_{h_N\to h_N}\leq C.
\end{equation}
The proofs of the last two inequalities in \eqref{eq25} are similar to that of \eqref{eq2.26}.
\end{proof}
Now our goal is to find a symplectic transformation $\Psi$, such that the term $\varepsilon \mi \frac{b_0}{2}\Big( \langle u,u\rangle+\langle \overline{u},\overline{u}\rangle\Big)$ disappear. To this end, let $G$ be a linear Hamiltonian of the form
\begin{equation}
G=b_1(\theta)\Big(\langle \Lambda^{-1}u,u\rangle+\langle\Lambda^{-1}\bar{u},\bar{u}\rangle \Big),
\end{equation}
where $\theta=\omega t$ and $b_{1}(\theta)$ need to be specified. Moreover, let
\begin{equation}
\Psi=X^t_{\varepsilon G}|_{t=1},
\end{equation}
where $X^t_{\varepsilon G}$ is the flow of Hamiltonian, $X_{\varepsilon G}$ is the vector field  of the Hamiltonian $\varepsilon G$ with the symplectic $\mi du \wedge d\bar{u}$. Let
\begin{equation}
H_0=\widetilde{H}\circ \Psi.
\end{equation}
Recall that $$
\widetilde{H}=\langle\rho \Lambda u,\overline{u}\rangle+\varepsilon\mi \frac{b_0}{2}\Big(\langle \overline{u},\overline{u}\rangle-\langle u,u\rangle\Big)+ \varepsilon \left[\langle R^{uu}(\theta)u, u\rangle
+\langle R^{u\overline{u}}(\theta)u, \overline{u}\rangle+\langle R^{\overline{u}\overline{u}}(\theta)\overline{u}, \overline{u}\rangle\right].
$$
Then we have $\widetilde{H}=N+\varepsilon Q+\varepsilon R_{0}$, where
\begin{equation}
 N=\langle\rho \Lambda u,\overline{u}\rangle, \quad Q=\mi\frac{b_0}{2}\Big( \langle \overline{u},\overline{u}\rangle-\langle u,u\rangle\Big),
\end{equation}
\begin{equation}
R_{0}=\left[\langle R^{uu}(\theta)u, u\rangle
+\langle R^{u\overline{u}}(\theta)u, \overline{u}\rangle+\langle R^{\overline{u}\overline{u}}(\theta)\overline{u}, \overline{u}\rangle\right].
\end{equation}
 Since the Hamiltonian $\widetilde{H}=\widetilde{H}(\omega t, u, \overline{u})$ depends on time $t,$ we introduce a fictitious
 action $I=$ constant, and let $\theta=\omega t$ be angle variable. Then the non-autonomous $\widetilde{H}(\omega t, u, \overline{u})$ can be written as
 $$\omega I+\widetilde{H}(\theta, u, \overline{u})$$
 with symplectic structure $d I\wedge d\theta +\mi\,  d u\wedge d \overline{u}$. See Section 45 (B) in \cite{Arnold}.
By Taylor formula, we have
\begin{equation}
\begin{split}
H_0=&\widetilde{H}\circ X^1_{\varepsilon G}\\
=&N+\varepsilon Q+\varepsilon\{N,G\}+\varepsilon^2\int^{1}_0\{Q,G\}\circ X^\tau_{\varepsilon G}d\tau \\
&+ \varepsilon^2\int^{1}_0(1-\tau)\{\{N,G\},G\}\circ X^\tau_{\varepsilon G}d\tau+\varepsilon R_{0}\circ X^1_{\varepsilon G}.
\end{split}
\end{equation}
where $\{N,G\}=\omega \cdot \partial_{\theta} b_1\Big(\langle\Lambda^{-1} u,u\rangle+ \langle\Lambda^{-1} \bar{u},\bar{u}\rangle\Big)-i \rho b_1 \Big(\langle \bar{u},\bar{u}\rangle-\langle u,u\rangle\Big).$
Let $ b_1=\frac{b_0}{2\rho}$, then we have $H_{0}=N+R$,
 where
 \begin{eqnarray}
 R&=&\varepsilon \omega \cdot \partial_{\theta} b_1\Big(\langle\Lambda^{-1} u,u\rangle- \langle\Lambda^{-1} \bar{u},\bar{u}\rangle\Big)\label{eq2.24}\\
 &+&\varepsilon^2\int^{1}_0\{{Q},G\}\circ X^\tau_{\varepsilon G}d\tau\label{eq2.27}\\
 &+& \varepsilon^2\int^{1}_0(1-\tau)\{\{N,G\},G\}\circ X^\tau_{\varepsilon G}d\tau\label{eq2.28}\\
  &+&\varepsilon R_{0}\circ X^1_{\varepsilon G}.\label{eq2.25}
  \end{eqnarray}
The aim of following section is to estimate $R$.

$\bullet$ Estimate of \eqref{eq2.24}.

Let
\begin{equation*}
\overline{G}^{*}=\left(
            \begin{array}{cc}
            \frac{\omega\cdot \partial_{\theta}b_0}{2\rho}\Lambda^{-1} & 0 \\
            0 & -\frac{\omega\cdot \partial_{\theta}b_0}{2\rho}\Lambda^{-1}\\
            \end{array}
          \right),
\;\;\widetilde{u}=\left(
                       \begin{array}{c}
                         u \\
                         \overline{u} \\
                       \end{array}
                     \right).
\end{equation*}
Then, we have \eqref{eq2.24}$=\langle \varepsilon \overline{G}^*\widetilde{u},\widetilde{u}\rangle$. Obviously, $$\sup_{\theta\in\mathbb{T}^{n}}\|\sum_{|\alpha|\leq {N-1}}\partial^{\alpha}_{\theta}J \overline{G}^{*}(\theta) J\|_{h_{N}\to h_{N}}\leq C.$$

$\bullet$ Estimate of \eqref{eq2.25}.
Let
\begin{equation*}
\widehat{R}=\left(
            \begin{array}{cc}
             R^{uu}(\theta, \omega) & \frac{1}{2}R^{u\,\overline{u}}(\theta, \omega) \\
            \frac{1}{2} R^{u\overline u}(\theta, \omega) & R^{\overline {u}\overline{u}}(\theta, \omega)\\
            \end{array}
          \right),
\;\;\mathcal{J}=\left(\begin{array}{cc}
             0& -\mi\, id \\
            \mi\, id & 0\\
            \end{array}
          \right).
\end{equation*}
and
\begin{equation}
\overline{G}=\left(
            \begin{array}{cc}
            \frac{b_0}{2\rho}\Lambda^{-1} & 0 \\
            0 & -\frac{b_0}{2\rho}\Lambda^{-1}\\
            \end{array}
          \right).
\end{equation}
Then we have
$$R_{0}=\langle \widehat{R}(\theta)\left(
                       \begin{array}{c}
                         u \\
                         \overline{u} \\
                       \end{array}
                     \right),
                     \left(
                       \begin{array}{c}
                         u \\
                         \overline{u} \\
                       \end{array}
                     \right)\rangle.
$$
It follows that
\begin{equation}\label{eq9.2}
\varepsilon^{2}\{R_{0}, G\}
=4\varepsilon^{2} \langle {\widehat{R}}(\theta)\mathcal{J}\overline{G}(\theta)\widetilde{u}, \widetilde{u}\rangle.
\end{equation}
Let $\widehat{G}=\mathcal{J}\overline{G}(\theta)$ and $[\widehat{R}, \widehat{G} ]=\widehat{R}\widehat{G}+(\widehat{R}\widehat{G})^{T}$. By Taylor formula, we have
$$\eqref{eq2.25}=\varepsilon\langle R_1^{*}\tilde{u},\tilde{u} \rangle,$$
where
\begin{equation}
R_1^{*}=\widehat{R}+2^{2}\varepsilon\widehat{R}\widehat{G}
+\sum_{j=2}^{\infty}\frac{2^{j+1}\varepsilon^{j}}{j!}\underbrace{[\cdots [\widehat{R},\widehat{G}], \cdots , \widehat{G}]}_{j-1-\mbox{fold}}\widehat{G}.
\end{equation}
Thus, we can see
$$\sup_{\theta\in\mathbb{T}^{n}}\|\sum_{|\alpha|\leq {N}}\partial^{\alpha}_{\theta}J R_1^{*}(\theta) J\|_{h_{N}\to h_{N}}\leq C.$$

$\bullet$ Estimate of \eqref{eq2.27}.
\begin{equation}
\{ {Q},G\}=\frac{2b^2_0}{\rho}\langle \Lambda^{-1}u,\bar{u} \rangle= \langle K^{*}\widetilde{u}, \widetilde{u}\rangle,
\end{equation}
where
\begin{equation}
K_1^{*}=\left(
            \begin{array}{cc}
            0 & \frac{2b^2_0}{\rho}\Lambda^{-1} \\
            \frac{2b^2_0}{\rho}\Lambda^{-1} & 0 \\
            \end{array}
          \right).
\end{equation}
By Taylor formula, we have
$$\eqref{eq2.27}=\varepsilon^2 \langle K^{*}\tilde{u},\tilde{u} \rangle$$
where
\begin{equation}
K^{*}=K^{*}_1
+\sum_{j=2}^{\infty}\frac{2^{j-1}\varepsilon^{j}}{j!}\underbrace{[\cdots K^{*}_1, \cdots , \widehat{G}]}_{j-2-\mbox{fold}}\widehat{G}.
\end{equation}
Now we have
$$\sup_{\theta\in\mathbb{T}^{n}}\|\sum_{|\alpha|\leq {N-1}}\partial^{\alpha}_{\theta}J K_1^{*}(\theta) J\|_{h_{N}\to h_{N}}\leq C.$$

$\bullet$ Estimate of \eqref{eq2.28}.

By  directly calculation, we have
\begin{equation}
\{\{N,G\},G\}=\langle H^{*}_1\bar{u},\bar{u} \rangle,
\end{equation}
where
\begin{equation}
H_1^{*}=\left(
            \begin{array}{cc}
            0 & -\frac{b^2_0}{\rho}\Lambda^{-1}  \\
            -\frac{b^2_0}{\rho}\Lambda^{-1}  & 0\\
            \end{array}
          \right).
\end{equation}
By Taylor formula, we have
$$\eqref{eq2.28}=\varepsilon^2 \langle H^{*}\tilde{u},\tilde{u} \rangle,$$
where
\begin{equation}
H^{*}=\frac{K^{*}_1}{2}
+\sum_{j=3}^{\infty}\frac{2^{j-2}\varepsilon^{j-1}}{j!}\underbrace{[\cdots H^{*}_1, \cdots , \widehat{G}]}_{j-3-\mbox{fold}}\widehat{G}.
\end{equation}
Now we have
$$\sup_{\theta\in\mathbb{T}^{n}}\|\sum_{|\alpha|\leq {N}}\partial^{\alpha}_{\theta}J H_1^{*}(\theta) J\|_{h_{N}\to h_{N}}\leq C.$$
In conclusion,
$$\sup_{\theta\in\mathbb{T}^{n}}\|\sum_{|\alpha|\leq {N-1}}\partial^{\alpha}_{\theta}J R J\|_{h_{N}\to h_{N}}\leq C.$$
Now, Theorem 1.1 can be transformed into a more exact expression.
\begin{theorem}\label{thm2.1}
With Assumptions {\bf A, B}, for given $1\gg\gamma>0$,
 there exists $\epsilon^*$ with $ \;0<\varepsilon^{*}=\varepsilon^{*} (n, \gamma)\ll \gamma,$
and exists a subset $\Pi\subset [1, 2]^{n}$ with
$$\mbox{Measure}\, \Pi\geq 1-O (\gamma^{1/3})$$
such that for any $0<\varepsilon<\varepsilon^{*}$ and any $\omega \in \Pi,$ there is a time-quasi-periodic  symplectic change
$$\left(
    \begin{array}{c}
      u \\
      \overline{u} \\
    \end{array} \right)=\Phi(\omega t)\left(
                                \begin{array}{c}
                                  \widetilde{{u}} \\
                                  \overline{ \widetilde{{u}}} \\
                                \end{array}
                              \right)
$$
such that the Hamiltonian system \eqref{eq211} is changed into
\begin{equation*}
\left\{%
\begin{array}{cl}
&\dot{\widetilde{u}}_{k}=\mi\, \frac{\partial \widetilde{H}}{\partial \overline{\widetilde{u}}_{k}},\;\;k\in\mathbb{Z},\\
\\
&\dot{\overline{\widetilde{u}}}_{k}=-\mi\, \frac{\partial \widetilde{H}}{\partial \widetilde{u}_{k}},\;\;k\in\mathbb{Z},\\
\end{array}%
\right.
\end{equation*}
where $$\widetilde{H}(\widetilde{u}, \overline{\widetilde{u}})=\Lambda_{0}^{\infty}\widetilde{u}_{0}\overline{\widetilde{u}}_{0}
+\sum_{j=1}^{\infty}(\Lambda_{j}^{\infty}\widetilde{{u}}_{j})\cdot \overline{\widetilde{{u}}}_{j},$$
where
$$\Lambda_{0}^{\infty}=\sqrt{\lambda_{0}}+\varepsilon Q_{0},\;\;\Lambda_{j}^{\infty}=\sqrt{\lambda_{j}}E_{22}+\varepsilon Q_{j}$$
with
\begin{description}
  \item[(i)] $Q_{0}$ and $Q_{k}\,(k=1, 2, \cdots)$ are independent of time $t,$ and $Q_{0}\in \mathbb{R},$ $Q_{k}$ is a $2 \times 2$  real matrix $(k=1, 2, \cdots)$;
  \item[(ii)] $\widetilde{Q}=diag (Q_{j})$ satisfies $\|J\widetilde{Q}J\|_{h_{N}\rightarrow h_{N}}\leq C,\;$ $J=diag (J_{j}: j=0, 1, \cdots),\;J_{0}=\sqrt[4]{\lambda_{0}},\;J_{j}=\sqrt[4]{\lambda_{j}}E_{22}, j=1, 2, \cdots$;
  \item[(iii)] $\Phi=\Phi(\omega t)$ is quasi-periodic in time and close to the identity map:
  \[\|\Phi(\omega t)-id\|_{h_{N}\rightarrow h_{N}}\leq C\varepsilon,\]where $id$ is the identity map from $h_{N}\rightarrow h_{N}.$
\end{description}
\end{theorem}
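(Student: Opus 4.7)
The plan is to prove Theorem \ref{thm2.1} by a KAM iteration scheme adapted to the finitely smooth setting. Writing $H_0=N+\varepsilon R$ with $N=\langle\rho\Lambda u,\overline u\rangle$ and the perturbation $R$ bounded as shown in Section 2, I would first apply the Jackson--Moser--Zehnder analytic approximation lemma to replace $R(\theta)$ by a sequence of real-analytic approximants $R^{(\nu)}$ on the shrinking strips $|\mathrm{Im}\,\theta|\le s_\nu$, with $s_\nu=\varepsilon_{\nu+1}^{1/N}$ and $\varepsilon_\nu=\varepsilon^{(4/3)^\nu}$, so that $\|R-R^{(\nu)}\|\lesssim s_\nu^{N}$ on the strip. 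This substitutes the missing analyticity by a controlled analytic tail, at the price that the approximation error eats a factor $\varepsilon_\nu^{2(3n+4)/N}$ at each step, which is absorbed because $N>200\,n$.

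At the $\nu$-th step I would look for a symplectic map $\Psi_\nu=X_{\varepsilon F_\nu}^{1}$ generated by a quadratic Hamiltonian
$$F_\nu=\langle F^{uu}(\theta)u,u\rangle+\langle F^{u\overline u}(\theta)u,\overline u\rangle+\langle F^{\overline u\,\overline u}(\theta)\overline u,\overline u\rangle,$$
so that $(N_\nu+\varepsilon R^{(\nu)})\circ\Psi_\nu=N_{\nu+1}+\varepsilon R^{(\nu+1)}$ with $N_{\nu+1}$ still block-diagonal in the $u_j=(z_j,z_{-j})^T$ structure. Matching the linear part in $\varepsilon$ produces three homological equations of the form
$$\omega\cdot\partial_\theta F^{u\overline u}_{kl}-\mi(\Lambda_k F^{u\overline u}_{kl}-F^{u\overline u}_{kl}\Lambda_l)=R^{u\overline u}_{kl},$$
and analogous equations for $F^{uu}$, $F^{\overline u\,\overline u}$; the average of the diagonal blocks of $R^{u\overline u}$ is kept as the new correction $\varepsilon\widetilde Q_\nu$ of the normal form. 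Passing to Fourier coefficients in $\theta$ turns each equation into linear systems whose coefficient operators are of the form $A(k,\omega)=-\langle k,\omega\rangle(1\otimes 1)+1\otimes\Lambda_k-\Lambda_l\otimes 1$, as in \eqref{1.10}.

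The main obstacle is the small divisor problem associated with $A(k,\omega)$. Because the eigenvalues $\lambda_j$ are double under periodic boundary conditions, $A$ is matrix-valued and the scalar P\"oschel-type arguments do not apply directly. Following the strategy sketched in Remark 1.5, I would use the variation principle of eigenvalues to control $\|A(k,\omega)^{-1}\|$ by $|k|^{n+1}/\gamma_\nu$ on a large set of $\omega\in[1,2]^n$, proving the non-degeneracy estimate $|\partial_\omega^\alpha \mathrm{spec}(A)|\gtrsim 1$ for some low-order derivative, and then excising at each step a union of resonant strips whose total Lebesgue measure is $O(\gamma_\nu^{1/3})$; summing over $\nu$ yields the measure bound $\mathrm{Measure}\,\Pi\ge 1-O(\gamma^{1/3})$.

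With the homological equations solved, I would estimate $\varepsilon F_\nu=O(\varepsilon_\nu^{1-2(3n+4)/N})$ on the strip $s_\nu$, deduce that the new perturbation satisfies $\{\varepsilon F_\nu,\varepsilon R^{(\nu)}\}+(\text{higher commutators})=O(\varepsilon_\nu^{4/3})=O(\varepsilon_{\nu+1})$ thanks to $N>200\,n$, and check that $\Psi_\nu$ is close to the identity in the operator norm on $h_N$ using Lemma 2.1 together with the symplectic flow estimate from the finite-smoothness operator calculus. Composing $\Phi=\cdots\circ\Psi_2\circ\Psi_1$ then gives convergence in the $h_N\to h_N$ norm with $\|\Phi-\mathrm{id}\|\le C\varepsilon$, producing the limit normal form $\widetilde H=\langle\Lambda^\infty\widetilde u,\overline{\widetilde u}\rangle$ with $\Lambda^\infty=\Lambda+\varepsilon\widetilde Q$, $\widetilde Q=\sum_\nu\widetilde Q_\nu$ real and $\|J\widetilde Q J\|_{h_N\to h_N}\le C$, which are exactly properties (i)--(iii) of Theorem \ref{thm2.1}.
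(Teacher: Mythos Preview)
Your proposal is correct and follows essentially the same approach as the paper: the Jackson--Moser--Zehnder telescoping approximation, the iteration scheme with $s_\nu=\varepsilon_{\nu+1}^{1/N}$ and $\varepsilon_\nu=\varepsilon^{(4/3)^\nu}$, the quadratic generating Hamiltonian $F_\nu$ solving the block homological equations via the tensor-product operator $A(k,\omega)$, the eigenvalue-variation argument for the small divisors with measure loss $O(\gamma_m^{1/3})$, and the composition of the $\Psi_m$ into the limiting $\Phi$ all match the paper's Sections 3--11. The only quantitative slip is the small-divisor bound: the paper needs $A_k=|k|^{2n+4}+8$ rather than $|k|^{n+1}$ (to absorb both the $(|i-j|+1)$ factor and the Fourier summation), which is what produces the exponent $2(3n+4)/N$ you quote; with $N>200\,n$ this is harmless.
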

\section{Analytical Approximation Lemma}
We need to find a series of operators which are analytic in some complex strip domains to approximate the operators
 $R^{uu}(\theta), R^{u\overline{u}}(\theta)$ and $R^{\overline{u}\,\overline{u}}(\theta)$. To this end, we cite an approximation lemma
(see \cite{Jackson, Salamon1989, Salamon2004} for the details). This method is used in \cite{yuan-zhang}, too.

We start by recalling some definitions and setting some new notations. Assume $X$ is a Banach space with the norm
$||\cdot||_{X}$. First recall that $C^{\mu}(\mathbb{R}^{n}; X)$ for $0< \mu <1$ denotes the space of bounded
H\"{o}lder continuous functions $f: \mathbb{R}^{n}\mapsto X$ with the form
$$\|f\|_{C^{\mu}, X}=\sup_{0<|x-y|<1}\frac{\|f(x)-f(y)\|_{X}}{|x-y|^{\mu}}+\sup_{x\in \mathbb{R}^{n}}\|f(x)\|_{X}.$$
If $\mu=0$ then $\|f\|_{C^{\mu},X}$ denotes the sup-norm. For $\ell=k+\mu$ with $k\in \mathbb{N}$ and $0\leq \mu <1,$
we denote by $C^{\ell}(\mathbb{R}^{n};X)$ the space of functions $f:\mathbb{R}^{n}\mapsto X$ with H\"{o}lder continuous partial derivatives, i.e., $\partial ^{\alpha}f\in C^{\mu}(\mathbb{R}^{n}; X_{\alpha})$ for all muti-indices $\alpha=(\alpha_{1}, \cdots, \alpha_{n})\in \mathbb{N}^{n}$ with the assumption that
$|\alpha|:=|\alpha_{1}|+\cdots+|\alpha_{n}|\leq k$ and $X_{\alpha}$ is the Banach space of bounded operators
$T:\prod^{|\alpha|}(\mathbb{R}^{n})\mapsto X$ with the norm
$$\|T\|_{X_{\alpha}}=\sup \{||T(u_{1}, u_{2}, \cdots, u_{|\alpha|})||_{X}:\|u_{i}\|=1, \;1\leq i \leq |\alpha|\}.$$
We define the norm
$$||f||_{C^{\ell}}=\sup_{|\alpha|\leq \ell}||\partial ^{\alpha}f||_{C^{\mu}, X_{\alpha}}$$
\begin{lemma}(Jackson-Moser-Zehnder)\label{Jackson}
Let $f\in C^{\ell}(\mathbb{R}^{n}; X)$ for some $\ell>0$ with finite $C^{\ell}$ norm over $\mathbb{R}^{n}.$
Let $\phi$ be a radial-symmetric, $C^{\infty}$ function, having as support the closure of the unit ball centered at the origin, where $\phi$ is completely flat and takes value 1. Let $K=\widehat{\phi}$ be its Fourier transform. For all $\sigma >0$ define
$$ f_{\sigma}(x):=K_{\sigma}\ast f=\frac{1}{\sigma^{n}}\int_{\mathbb{R}^{n}}K(\frac{x-y}{\sigma})f(y)dy.$$
Then there exists a constant $C\geq 1$ depending only on $\ell$ and $n$ such that the following holds: for any $\sigma >0,$ the function $f_{\sigma}(x)$ is a real-analytic function from $\mathbb{C}^{n}/(\pi \mathbb{Z})^{n}$ to $X$ such that if $\Delta_{\sigma}^{n}$ denotes the $n$-dimensional complex strip of width $\sigma,$
$$\Delta_{\sigma}^{n}:=\{x\in \mathbb{C}^{n}\big ||\mathrm{Im} x_{j}|\leq \sigma,\;1\leq j\leq n\},$$
then for any $\alpha\in\mathbb{N}^{n}$ such that $|\alpha|\leq \ell$ one has

\begin{equation*}\label{cite3.1}
\sup_{x\in \Delta_{\sigma}^{n}}||\partial ^{\alpha}f_{\sigma}(x)
-\sum_{|\beta|\leq \ell-|\alpha|}\frac{\partial^{\beta+\alpha}f(\mathrm{Re}x)}{\beta !}(\sqrt{-1}\mathrm{Im}x)^{\beta}||_{X_{\alpha}}\leq C ||f||_{C^{\ell}}\sigma^{\ell-|\alpha|},
\end{equation*}

and for all $0\leq s\leq \sigma,$
\begin{equation*}\label{cite3.2}
\sup_{x\in \Delta_{s}^{n}}\|\partial^{\alpha} f_{\sigma}(x)-\partial^{\alpha}f_{s}(x)\|_{X_{\alpha}}
\leq C ||f||_{C^{\ell}}\sigma^{\ell-|\alpha|}.
\end{equation*}

The function $f_{\sigma}$ preserves periodicity (i.e., if $f$ is T-periodic in any of its variable $x_{j}$, so is $f_{\sigma}$). Finally, if $f$ depends on some parameter $\xi\in \Pi\subset\mathbb{R}^{n}$ and
$$||f(x,\xi)||_{C^{\ell}(X)}^{\mathcal{ L}}:=\sup_{\xi\in\Pi}||\partial_{\xi}\,f(x,\xi)|
|_{C^{\ell}(X)}$$ are uniformly bounded by a constant $C$,
then all the above estimates hold true with $\|\cdot\|$ replaced by $\|\cdot\|^{\mathcal{L}}.$
\end{lemma}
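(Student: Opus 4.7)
The plan is to build $f_\sigma$ as the convolution $f_\sigma = K_\sigma \ast f$ and exploit two structural features of the kernel. First, because $\phi$ is a compactly supported $C^\infty$ bump that is identically $1$ on a neighborhood of $0$, its Fourier transform $K=\widehat\phi$ extends by the Paley--Wiener theorem to an entire function on $\mathbb{C}^n$ with super-polynomial decay in the real direction and exponential growth controlled by the support of $\phi$. This alone shows that $f_\sigma(x)$, viewed as a function of $x$, extends analytically to the strip $\Delta_\sigma^n$ and preserves any periodicity of $f$ (since translating $x$ by a period can be absorbed by translating the dummy variable $y$). Second, the flatness of $\phi$ at the origin translates, via $\phi(\xi)=\int K(w)e^{-2\pi i w\cdot\xi}\,dw$ and differentiation at $\xi=0$, into the vanishing moment conditions
\begin{equation*}
\int_{\mathbb{R}^n} K(w)\,dw=1,\qquad \int_{\mathbb{R}^n}w^\beta K(w)\,dw=0\ \ \text{for all }|\beta|\geq 1.
\end{equation*}

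Next I would prove the main inequality. Fix $x\in\Delta_\sigma^n$, write $x=\mathrm{Re}\,x+i\,\mathrm{Im}\,x$, and change variables $y=\mathrm{Re}\,x-\sigma w$ in the convolution to obtain
\begin{equation*}
\partial^\alpha f_\sigma(x)=\int_{\mathbb{R}^n} K\!\left(w+i\frac{\mathrm{Im}\,x}{\sigma}\right)\partial^\alpha f(\mathrm{Re}\,x-\sigma w)\,dw.
\end{equation*}
Now I would Taylor expand $\partial^\alpha f(\mathrm{Re}\,x-\sigma w)$ around $\mathrm{Re}\,x$ to order $\ell-|\alpha|$; the Hölder hypothesis on the top derivatives yields a remainder bounded by $C\|f\|_{C^\ell}\sigma^{\ell-|\alpha|}(1+|w|)^{\ell-|\alpha|}$, which integrates against the fast-decaying $K(w+i\,\mathrm{Im}\,x/\sigma)$ to give exactly the claimed error term. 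The polynomial part produces $\sum_{|\beta|\leq\ell-|\alpha|}\frac{\partial^{\alpha+\beta}f(\mathrm{Re}\,x)}{\beta!}(-\sigma)^{|\beta|}\int w^\beta K(w+i\,\mathrm{Im}\,x/\sigma)\,dw$, so everything reduces to evaluating the shifted moments.

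The identity I need is
\begin{equation*}
\int_{\mathbb{R}^n} w^\beta K(w+i\eta)\,dw=(-i\eta)^\beta,\qquad \eta=\mathrm{Im}\,x/\sigma.
\end{equation*}
To get this, shift contour using the analyticity and decay of $K$, substitute $w'=w+i\eta$, and expand $(w'-i\eta)^\beta$ by the multinomial theorem; only the $\gamma=0$ term survives thanks to the vanishing moment conditions, yielding $(-i\eta)^\beta$. Combined with $(-\sigma)^{|\beta|}(-i\,\mathrm{Im}\,x/\sigma)^\beta=(i\,\mathrm{Im}\,x)^\beta$, this exactly reproduces the Taylor polynomial in the statement, so the first inequality follows. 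The second inequality, comparing $\partial^\alpha f_\sigma$ and $\partial^\alpha f_s$ for $s\leq \sigma$, is obtained by subtracting the two Taylor expansions at a common base point $\mathrm{Re}\,x$: the polynomial parts coincide (the identity above depends only on $\eta\sigma=\mathrm{Im}\,x$, not on $\sigma$ itself), and the two Hölder remainders are separately $O(\|f\|_{C^\ell}\sigma^{\ell-|\alpha|})$.

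The preservation of periodicity is an immediate change of variable, and the parameter-dependent estimate is proved by the identical argument applied to $\partial_\xi f$, whose $C^\ell$ norm is bounded uniformly in $\xi$ by hypothesis; the mollification commutes with $\partial_\xi$, so no new ideas are required. The main technical obstacle, as I see it, is the contour shift used to evaluate $\int w^\beta K(w+i\eta)\,dw$: one must carefully justify moving the integration contour in $\mathbb{C}^n$ using the Paley--Wiener decay of $K$ and a limiting argument that truncates $w$ to a large ball, so that $\phi$ supported in a fixed ball and flat near $0$ genuinely forces the higher moments of $K$ to vanish on any horizontal slice, not just on $\mathbb{R}^n$. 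Once that identity is in hand, the rest is bookkeeping between the Hölder remainder and the decay of the mollifier.
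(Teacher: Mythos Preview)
Your proposal is correct and in fact supplies more than the paper does: the paper does not prove this lemma at all. Immediately after the statement it simply remarks that ``the proof of this lemma consists in a direct check which is based on standard tools from calculus and complex analysis'' and refers the reader to Zehnder, Chierchia, and Yuan--Zhang. Your sketch is precisely that direct check: Paley--Wiener for the analyticity of $K$, the vanishing-moment conditions coming from the flatness of $\phi$ at the origin, a Taylor expansion of $\partial^\alpha f$ with H\"older remainder, and the contour-shift identity $\int w^\beta K(w+i\eta)\,dw=(-i\eta)^\beta$. This is the standard argument in the cited references, so there is no substantive difference in approach---you have just written out what the paper leaves to the literature.
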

The proof of this lemma consists in a direct check which is based on standard tools from calculus and complex analysis. It is used to deal with KAM theory for finite smooth systems by Zehnder \cite{E. Zehnder}. Also see \cite{Chierchia} and \cite{yuan-zhang} and references therein, for example. For simplicity of notation, we shall replace $\|\cdot\|_{X}$ by
$\|\cdot\|.$ Now let us apply this lemma to the perturbation $P(\phi).$

Fix a sequence of fast decreasing numbers $s_{\nu}\downarrow 0, \upsilon\geq 0,$ and $s_{0}\leq \frac{1}{2}.$
For an $X$-valued function $P(\phi),$
construct a sequence of real analytic functions $P^{(\upsilon)}(\phi)$ such that the following conclusions hold:
\begin{description}
  \item[(1)] $P^{(\upsilon)}(\phi)$ is real analytic on the complex strip $\mathbb{T}^{n}_{s_{\upsilon}}$ of the width $s_{\upsilon}$ around $\mathbb{T}^{n}.$
  \item[(2)] The sequence of functions $P^{(\upsilon)}(\phi)$ satisfies the bounds:
  \begin{equation}\label{cite3.3}
  \sup_{\phi\in\mathbb{T}^{n}}\|P^{(\upsilon)}(\phi)-P(\phi)\|\leq C \|P\|_{C^{\ell}}s_{\upsilon}^{\ell},
  \end{equation}
  \begin{equation}\label{cite3.4}
  \sup_{\phi\in\mathbb{T}^{n}_{s_{\upsilon+1}}}\|P^{(\upsilon+1)}(\phi)-P^{(\upsilon)}(\phi)\|\leq C \|P\|_{C^{\ell}}s_{\upsilon}^{\ell},
  \end{equation}
  where $C$ denotes (different) constants depending only on $n$ and $\ell.$
  \item[(3)] The first approximate $P^{(0)}$ is ``small" with the perturbation $P$. Precisely speaking, for arbitrary $\phi\in\mathbb{T}^{n}_{s_{0}},$ we have
      \begin{eqnarray}\label{cite3.5}
      \|P^{(0)}(\phi)\|&\leq &
      C\|P\|_{C^{\ell}},
      \end{eqnarray}
      where the constant $C$ is independent of $s_{0},$ and the last inequality holds true due to the hypothesis that
      $s_{0}\leq \frac{1}{2}.$
  \item[(4)] From the first inequality \eqref{cite3.3}, we have the equality below. For any arbitrary $\phi\in\mathbb{T}^{n},$
      \begin{equation}\label{cite3.6}
      P(\phi)=P^{(0)}(\phi)+\sum_{\upsilon=0}^{+\infty}(P^{(\upsilon+1)}(\phi)-P^{(\upsilon)}(\phi)).
      \end{equation}
\end{description}

Now take a sequence of real numbers $\{s_{v}\geq 0\}_{v=0}^{\infty}$ with $s_{v}>s_{v+1}$ going fast to zero.
Let $R^{p,q}(\theta)=P(\theta)$ for $p, q\in \{u, \overline{u}\}.$ Then by \eqref{cite3.6} and \eqref{eq25},
 for $p, q\in\{u, \overline{u}\},$ we have,
  \begin{equation}\label{*}
 R^{p,q}(\theta)=R_{0}^{p,q}(\theta)+\sum_{l=1}^{\infty}R^{p,q}_{l}(\theta),
 \end{equation}
 where $R^{p,q}_{0}(\theta)$ is analytic in $\mathbb{T}_{s_{0}}^{n}$ with
  \begin{equation}\label{**}
 \sup_{\theta\in\mathbb{T}^{n}_{s_{0}}}\|R_{0}^{p,q}(\theta)\|_{h_{N}\to h_{N}}\leq C,
  \end{equation}
 and $R_{l}^{p,q}(\theta)\;(l\geq 1)$ is analytic in $\mathbb{T}^{n}_{s_{l}}$ with
 \begin{equation}\label{***}
 \sup_{\theta\in\mathbb{T}^{n}_{s_{l}}}\|J R_{l}^{p,q}(\theta) J\|_{h_{N}\to h_{N}}\leq Cs^{N}_{l}.
  \end{equation}
 \section{Iterative parameters of domains}
 Let

\begin{itemize}
  \item $\varepsilon_{0}=\varepsilon, \varepsilon_{\nu}=\varepsilon^{(\frac{4}{3})^{\nu}}, \nu=0, 1, 2, \cdots,$
  which measures the size of perturbation at $\nu-th$ step.
  \item $s_{\nu}=\varepsilon_{\nu+1}^{1/N}, \nu=0, 1, 2, \cdots,$
  which measures the strip-width of the analytic domain $\mathbb{T}_{s_{\nu}}^{n},$
  $\mathbb{T}_{s_{\nu}}^{n}=\{\theta\in\mathbb{C}^{n}/2\pi\mathbb{Z}^{n}: |Im \theta|\leq s_{\nu}\}.$
  \item $C({\nu})$ is a constant which may be different in different places, and it is of the form
   $$C({\nu})= C_{1} 2^{C_{2}\nu},$$
  where $C_{1},$ $C_{2}$ are  constants.
  \item $K_{\nu}=100 s_{\nu}^{-1}2^{\nu}|\log \varepsilon|.$
  \item$\gamma_{\nu}=\frac{\gamma}{2^{\nu}},\,0<\gamma\ll 1.$
  \item A family of subsets $\Pi_{\nu}\subset [1, 2]^{n}$ with $[1, 2]^{n}\supset \Pi_{0}\supset \cdots \supset \Pi_{\nu}\supset \cdots,$
  and $$mes \Pi_{\nu}\geq mes \Pi_{\nu-1}-C\gamma^{1/3}_{\nu-1}.$$

  \item For an operator-value (or a vector-value) function $B(\theta, \omega),$ whose domain is
  $(\theta, \omega)\in \mathbb{T}_{s_{\nu}}^{n}\times \Pi_{\nu}.$ Set
  $$ \|B\|_{\mathbb{T}^{n}_{s_{\nu}}\times \Pi_{\nu}}=\sup_{(\theta, \omega)
  \in \mathbb{T}^{n}_{s_{\nu}}\times \Pi_{\nu}}\|B(\theta, \omega)\|_{h_{N}\to h_{N}},$$ where $\|\cdot\|_{h_{N}\to h_{N}}$ is the operator norm, and set
  $$ \|B\|^{\mathcal{L}}_{\mathbb{T}^{n}_{s_{\nu}}\times \Pi_{\nu}}=\sup_{(\theta, \omega)
  \in \mathbb{T}^{n}_{s_{\nu}}\times \Pi_{\nu}}\|\partial_{\omega}B(\theta, \tau)\|_{h_{N}\to h_{N}}.$$
\end{itemize}
\section{Iterative Lemma}
In the following, for a function $f(\omega)$, denote by $\partial_{\omega}$ the derivative of $f(\omega)$ with respect to $\omega$ in Whitney's sense.
\begin{lemma}
For $p, q\in \{u, \overline{u} \},$ let $R_{0,0}^{p,q}=R_{0}^{p,q}, R_{l,0}^{p,q}=R_{l}^{p,q},$ where
$R_{0}^{p,q}, R_{l}^{p,q}$ are defined by \eqref{*}, \eqref{**} and \eqref{***}.
Assume that we have a family of Hamiltonian functions $H_{\nu}$:
\begin{equation}\label{eq26}
H_{\nu}=\Lambda_{0}^{(\nu)}u_{0}\overline{u}_{0}+\sum_{j=1}^{\infty}(\Lambda_{j}^{(\nu)}u_{j})\cdot\overline{u}_{j}
+\sum_{l\geq \nu}^{\infty}\varepsilon_{l}(\langle R^{uu}_{l,\nu}u,u\rangle+\langle R^{u\overline{u}}_{l,\nu}u, \overline{u}\rangle
+\langle R^{\overline{u}\,\overline{u}}_{l,\nu}\overline{u}, \overline{u}\rangle),\;\nu=0,1, \cdots, m,
\end{equation}
where $R^{uu}_{l,\nu}, R^{u\overline{u}}_{l,\nu}, R^{\overline{u}\overline{u}}_{l,\nu}$ are operator-valued functions defined on the domain $\mathbb{T}_{s_{\nu}}^{n}\times \Pi_{\nu},$ and
\begin{equation*}\label{eq27}
\theta=\omega t,\;\;\omega=(\omega_{1}, \omega_{2}, \cdots\omega_{n}).
\end{equation*}
\begin{description}
  \item[$(A1)_{\nu}$]
\begin{eqnarray}
  &&\Lambda_{0}^{(0)}=\rho\sqrt{\lambda_{0}},\;\;
\Lambda_{0}^{(\nu)}=\rho\sqrt{\lambda_{0}}+\sum_{i=0}^{\nu-1}\varepsilon_{i}\mu_{0}^{(i)},\;\;\nu\geq 1; \label{eq28}\\
&&\Lambda_{j}^{(0)}=\rho\sqrt{\lambda_{j}}E_{22},\;\;
\Lambda_{j}^{(\nu)}=\rho\sqrt{\lambda_{j}}E_{22}+\sum_{i=0}^{\nu-1}\varepsilon_{i}\mu_{j}^{(i)},\;\; j=1, 2, \cdots,\;\;\nu\geq 1,\label{eq281}
\end{eqnarray}
where
\begin{description}
  \item[(i)] $\mu_{0}^{(i)}=\mu_{0}^{(i)}(\omega): \Pi_{i}\rightarrow \mathbb{R}$ with
\begin{eqnarray}
 &&|\mu_{0}^{(i)}|_{\Pi_{i}}:=\sup_{\omega\in\Pi_{i}}|\mu_{0}^{(i)}(\omega)|\leq C(i),\;
  0\leq i\leq\nu-1,\label{eq291}\\ &&|\mu_{0}^{(i)}|_{\Pi_{i}}^{\mathcal{L}}:=\sup_{\omega\in\Pi_{i}}\max_{1\leq l\leq n}|\partial_{\omega_{l}}\mu_{0}^{(i)}(\omega)|\leq C(i),\;
  0\leq i\leq\nu-1.\label{eq0291}
  \end{eqnarray}
Here $|\cdot|$ denotes the absolute value of a function,
  \item[(ii)]$\mu_{j}^{(i)}=\mu_{j}^{(i)}(\omega)\;(j=1, 2, \cdots,\;0\leq i\leq\nu-1,\;\nu\geq 1)$ are $2\times 2$ real symmetry matrices with
\begin{eqnarray}
 &&|\mu_{j}^{(i)}|_{\Pi_{i}}:=\sup_{\omega\in\Pi_{i}}|\mu_{j}^{(i)}(\omega)|\leq C(i)/j, \label{eq29}\\
  &&|\mu_{j}^{(i)}|_{\Pi_{i}}^{\mathcal{L}}:=\sup_{\omega\in\Pi_{i}}\max_{1\leq l\leq n}|\partial_{\omega_{l}}\mu_{j}^{(i)}(\omega)|\leq C(i)/j.
  \label{eq029}
  \end{eqnarray}
Here $|\cdot|$ denotes the sup-norm for real matrices.
\end{description}

 \item[$(A2)_{\nu}$]
 For $p, q\in \{u,\overline{u}\},$ $R_{l,\nu}^{p,q}=R_{l,\nu}^{p,q}(\theta, \omega)$ is defined in
  $\mathbb{T}^{n}_{s_{l}}\times \Pi_{\nu}$ with $l\geq \nu,$ and is analytic in $\theta$ for fixed $\omega\in \Pi_{\nu},$
  and
  \begin{equation}\label{eq30}
  \|J R^{p,q}_{l,\nu} J\|_{\mathbb{T}^{n}_{s_{l}}\times \Pi_{\nu}}\leq C(\nu),
  \end{equation}
  \begin{equation}\label{eq31}
  \|J R^{p,q}_{l,\nu} J\|^{\mathcal{L}}_{\mathbb{T}^{n}_{s_{l}}\times \Pi_{\nu}}\leq C(\nu).
  \end{equation}
\end{description}
Then there exists a compact set $\Pi_{m+1}\subset \Pi_{m}$ with
 \begin{equation}\label{eq32}
 mes \Pi_{m+1}\geq mes \Pi_{m}-C\gamma_{m}^{1/3},
  \end{equation}
  and exists a symplectic coordinate change
  \begin{equation*}\label{eq33}
\Psi_{m}: \mathbb{T}^{n}_{s_{m+1}}\times \Pi_{m+1}\rightarrow \mathbb{T}^{n}_{s_{m}}\times \Pi_{m},
  \end{equation*}
  \begin{equation}\label{eq33H}
||\Psi_{m}-id ||_{h_{N}\to h_{N}}\leq \varepsilon^{1/2},\;(\theta,\omega)\in \mathbb{T}^{n}_{s_{m+1}}\times \Pi_{m+1}
  \end{equation}
  such that the Hamiltonian function $H_{m}$ is changed into
 \begin{eqnarray}\label{eq34}
H_{m+1}& \triangleq & H_{m}\circ \Psi_{m}\nonumber\\
&=&\Lambda_{0}^{(m+1)}u_{0}\overline{u}_{0}+\sum_{j=1}^{\infty}(\Lambda_{j}^{(m+1)}u_{j})\cdot\overline{u}_{j}
+\sum_{l\geq m+1}^{\infty}\varepsilon_{l}\left [\langle R^{uu}_{l,m+1}u,u\rangle\right.\\
&&\left.+\langle R^{u\overline{u}}_{l,m+1}u, \overline{u}\rangle
+\langle R^{\overline{u}\,\overline{u}}_{l,m+1}\overline{u}, \overline{u}\rangle\right ]\nonumber,
  \end{eqnarray}
  which is defined on the domain $\mathbb{T}^{n}_{s_{m+1}}\times \Pi_{m+1},$
  and ${\Lambda_{j}^{(m+1)}}^{,}s$ satisfy the assumptions $(A 1)_{m+1}$ and
  $R_{l, m+1}^{p,q} (p,q\in \{u, \overline{u}\})$ satisfy the assumptions $(A 2)_{m+1}.$
\end{lemma}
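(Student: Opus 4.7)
The plan is one standard KAM step: construct an auxiliary quadratic Hamiltonian
$$F_m = \langle F^{uu}_m(\theta) u,u\rangle + \langle F^{u\overline u}_m(\theta)u,\overline u\rangle + \langle F^{\overline u\,\overline u}_m(\theta)\overline u,\overline u\rangle$$
and take $\Psi_m := X^{t}_{\varepsilon_m F_m}\big|_{t=1}$, the time-$1$ map of its Hamiltonian flow (autonomized by the fictitious action $I$, as in Section 2). I would choose $F_m$ to solve the homological equation
$$\omega\cdot\partial_\theta F_m - \{N_m,F_m\}_{u,\overline u} = R_{m,m} - \langle [R^{u\overline u}_{m,m}]_{\mathrm{blk\,avg}}\, u,\overline u\rangle,$$
where $[\,\cdot\,]_{\mathrm{blk\,avg}}$ is the block-diagonal $\theta$-average. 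The right-hand average defines $\mu_j^{(m)}$ and yields the update $\Lambda_j^{(m+1)} = \Lambda_j^{(m)} + \varepsilon_m\mu_j^{(m)}$ in \eqref{eq28}--\eqref{eq281}. A Taylor expansion of $H_m\circ\Psi_m$ then produces $N_{m+1}$ plus three correction terms — two quadratic-in-$F_m$ Lie integrals of formal order $\varepsilon_m^2$, and $\sum_{l>m}\varepsilon_l R^{p,q}_{l,m}\circ\Psi_m$ — which, together with the tail of the Fourier truncation in $\theta$ at $|k|\le K_m$, will be reassembled into the new perturbations $\varepsilon_l R^{p,q}_{l,m+1}$ for $l\geq m+1$.

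Passing to Fourier series in $\theta$ truncated at $|k|\leq K_m$ and writing the Poisson bracket with $N_m$ block by block, the homological equation decouples into three families of matrix equations, the $u\overline u$ family being
$$\langle k,\omega\rangle\,\widehat F^{u\overline u}_{ij}(k) - \bigl(\Lambda_i^{(m)}\widehat F^{u\overline u}_{ij}(k) - \widehat F^{u\overline u}_{ij}(k)\Lambda_j^{(m)}\bigr) = -\mi\,\widehat R^{u\overline u}_{ij}(k),\qquad 0<|k|\leq K_m,$$
(plus the off-block-diagonal part at $k=0$), with the families for $F^{uu}, F^{\overline u\,\overline u}$ carrying $+$ between the two $\Lambda$-terms; each is an equation in a $1\times 1$, $1\times 2$, $2\times 1$, or $2\times 2$ matrix, and solvability reduces to invertibility of the linear operators $A^{p,q}_{ij}(k,\omega)$ on the left.

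The main obstacle is the small-divisor analysis of these $A^{p,q}_{ij}$. Because periodic boundary conditions give double eigenvalues $\sqrt{\lambda_j}$, the $A^{p,q}_{ij}$ are genuinely matrix-valued rather than scalar. Following the variation-principle strategy indicated in Remark 1.4, I would diagonalize the symmetric matrices $\Lambda^{(m)}_i(\omega), \Lambda^{(m)}_j(\omega)$ pointwise in $\omega$ — this is legitimate because their Whitney $\omega$-derivatives are controlled by $(A1)_m$ — reducing the invertibility of $A^{p,q}_{ij}(k,\omega)$ to scalar divisor estimates
$$|\langle k,\omega\rangle \pm \lambda(\omega)\pm\mu(\omega)|\geq \frac{\gamma_m}{|k|^{n+1}},$$
where $\lambda,\mu$ are eigenvalues of $\Lambda^{(m)}_i,\Lambda^{(m)}_j$. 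The asymptotics $\lambda\sim\rho\, i,\;\mu\sim\rho\, j$ restrict the dangerous second-Melnikov (``$-$'') cases to finitely many $(i,j)$ per $k$; a standard Diophantine excision based on \eqref{eq3} then defines $\Pi_{m+1}$ satisfying \eqref{eq32}.

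For $\omega\in\Pi_{m+1}$, a Cauchy estimate on the truncated Fourier series, trading the loss of analyticity $s_m - s_{m+1}$ against a factor $K_m^{\,n+1}$, would yield
$$\|JF_m J\|_{\mathbb T^n_{s_{m+1}}\times\Pi_{m+1}} + \|JF_m J\|^{\mathcal{L}}_{\mathbb T^n_{s_{m+1}}\times\Pi_{m+1}} \leq C(m)\,\gamma_m^{-2}\,\varepsilon_m^{-2(3n+4)/N},$$
so that $\varepsilon_m F_m$ is of size $\varepsilon_m^{1-2(3n+4)/N}\ll \varepsilon^{1/2}$ under the hypothesis $N>200n$. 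Hence $\Psi_m$ is well-defined on $\mathbb T^n_{s_{m+1}}$, symplectic, and satisfies \eqref{eq33H}. The two Lie-integral corrections are then $O(\varepsilon_m^{2-4(3n+4)/N}) \leq O(\varepsilon_m^{4/3})=O(\varepsilon_{m+1})$, and the Fourier truncation tail is $O(e^{-K_m(s_m-s_{m+1})})\leq O(\varepsilon_{m+1})$; together they supply $\varepsilon_{m+1}R^{p,q}_{m+1,m+1}$ satisfying $(A2)_{m+1}$, while each $R^{p,q}_{l,m}\circ\Psi_m$ with $l>m$ defines $R^{p,q}_{l,m+1}$ with the same bounds up to a universal constant. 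Finally, the block-diagonal $\theta$-average used to define $\mu_j^{(m)}$ inherits \eqref{eq291}--\eqref{eq029} directly from \eqref{eq30}--\eqref{eq31}, completing $(A1)_{m+1}$.
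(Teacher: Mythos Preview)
Your overall architecture matches the paper's (Sections 6--10): solve truncated homological equations for a quadratic $F_m$, take the time-$1$ flow, Taylor-expand $H_m\circ\Psi_m$ into $N_{m+1}$ plus a truncation tail, two Lie integrals, and $\sum_{l>m}\varepsilon_l R_{lm}\circ\Psi_m$, and reassemble. Two steps, however, would not go through as written.

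\emph{The matrix divisor.} Diagonalizing $\Lambda_i^{(m)}(\omega),\Lambda_j^{(m)}(\omega)$ pointwise in $\omega$ is unsafe here: since $\Lambda_j^{(m)}=\rho\sqrt{\lambda_j}\,E_{22}+O(\varepsilon_0/j)$ is an $O(\varepsilon_0/j)$ perturbation of a scalar matrix, its two eigenvalues nearly coincide and the eigenvector matrices are generically \emph{not} Lipschitz in $\omega$---your Lipschitz bound on $F$ would then fail. The paper avoids any diagonalization: it vectorizes each block equation to
\[\bigl(-\langle k,\omega\rangle\,(1\otimes1)+1\otimes\Lambda_i^{(m)}-(\Lambda_j^{(m)})^T\otimes 1\bigr)\,\mathrm{vec}\,\widehat F_{ij}(k)=\mathrm{vec}\,(\mi\,\widehat R_{ij}(k)),\]
bounds the inverse via the spectral norm (only eigenvalues enter), and for the Lipschitz estimate applies $\partial_\omega$ to the \emph{equation} rather than to the solution formula---this costs a second divisor and is why the Lipschitz exponent is $-6(3n+4)/N$, not your $-2(3n+4)/N$. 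The variation principle you cite from Remark 1.4 is used only for eigenvalue Lipschitzness in the measure estimate (Section 10), never for eigenvectors.

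\emph{The off-diagonal weight.} Your divisor condition $\geq\gamma_m/|k|^{n+1}$ is too weak. The paper imposes
\[\bigl|{-}\langle k,\omega\rangle+\mu^{ml}_{kij}\bigr|\geq\frac{(|i-j|+1)\,\gamma_m}{|k|^{2n+4}+8},\]
and the factor $(|i-j|+1)$ is essential: it produces the entrywise decay $\|J_i\widehat F_{ij}(k)J_j\|_2\leq C/(|i-j|+1)$ which, via the P\"oschel-type Lemma 7.3 (modeled on Theorem A.1 of \cite{Poschel1996}), is what converts back into the operator-norm bound $\|JFJ\|_{h_N\to h_N}$ required for $(A2)_{m+1}$. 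The measure estimate still closes because on the resonant set the asymptotic $\mu^{ml}_{kij}=\rho(\sqrt{\lambda_i}-\sqrt{\lambda_j})+O(\varepsilon_0)$ forces $|i-j|\lesssim|k|$, so the extra numerator costs only a polynomial in $|k|$; note also that there are infinitely many $(i,j)$ per $k$ (all pairs with $|i-j|$ bounded), not finitely many---the paper handles this by reducing, for large $i,j$, to the integer divisor $|{-}\langle k,\omega\rangle+\rho(i-j)|$.
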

\section{Derivation of homological equations}
Our end is to find a symplectic transformation $\Psi_{\nu}$ such that the terms ${R^{uu}_{l, v}}$, ${R^{u\overline{u}}_{l, v}}$, ${R^{\overline{u}\,\overline{u}}_{l, v}}$
(with $l=v$) disappear. To this end, let $F$ be a linear Hamiltonian of the form
 \begin{equation}\label{eq35}
F=\langle F^{uu}(\theta,\omega)u, u\rangle+\langle F^{u \overline{u}}(\theta, \omega)u, \overline{u}\rangle+\langle F^{\overline{u}\,\overline{u}}(\theta, \omega)\overline{u}, \overline{u}\rangle,
  \end{equation}
where $\theta=\omega t,$ $(F^{uu}(\theta, \omega))^{T}=F^{uu}(\theta, \omega),$ $(F^{u\overline{u}}(\theta, \omega))^{T}=F^{u\overline{u}}(\theta, \omega),$ $(F^{\overline{u}\overline{u}}(\theta, \omega))^{T}=F^{\overline{u}\overline{u}}(\theta, \omega).$
Moreover, let
\begin{equation}\label{eq36}
\Psi=\Psi_{m}=X_{\varepsilon_{m}F}^{t}\big | _{t=1},
 \end{equation}
 where $X^{t}_{\varepsilon_{m}F}$ is the flow of the Hamiltonian, $X_{\varepsilon_{m}F}$ is the vector field
 of the Hamiltonian $\varepsilon_{m}F$ with the symplectic structure $\mi\,  du\wedge d\overline{u}.$
 Let
 \begin{equation}\label{eq37}
H_{m+1}=H_{m}\circ \Psi_{m}.
 \end{equation}
 By \eqref{eq26}, we have
 \begin{equation}\label{eq38}
H_{m}=N_{m}+R_{m},
 \end{equation}
 with
\begin{equation}\label{eq39}
N_{m}=\omega I+\Lambda_{0}^{(m)}u_{0}\overline{u}_{0}+\sum_{j=1}^{\infty}(\Lambda_{j}^{(m)}u_{j})\cdot\overline{u}_{j},
 \end{equation}
 \begin{equation}\label{eq40}
R_{m}=\sum_{l=m}^{\infty}\varepsilon_{l}R_{lm},
 \end{equation}
 \begin{equation}\label{eq41}
R_{lm}=\langle R_{l,m}^{uu}(\theta)u, u\rangle+\langle R_{l,m}^{u\overline{u}}(\theta)u, \overline{u}\rangle+\langle R_{l,m}^{\overline{u}\,\overline{u}}(\theta)\overline{u}, \overline{u}\rangle,
 \end{equation}
 where $(R_{l,m}^{uu}(\theta))^{T}=R_{l,m}^{uu}(\theta), $ $(R_{l,m}^{u\overline{u}}(\theta))^{T}=R_{l,m}^{u\overline{u}}(\theta), $ $(R_{l,m}^{\overline{u} \overline{u}}(\theta))^{T}=R_{l,m}^{\overline{u}\overline{u}}(\theta).$

Recall that the sequence $z=(z_{j}\in \mathbb{C},\;j\in \mathbb{Z})$ can be rewriten as
$$z=(z_{0}, \,z_{j}, \,z_{-j}:\;j=1, 2,\cdots)= u=(u_{j}: \;j=0, 1, 2,\cdots),$$
where $u_{0}=z_{0}$, $u_{j}=(z_{j}, \,z_{-j})^{T},\;j=1, 2,\cdots.$
Suppose $\{\cdot, \cdot\}$ is the Poisson bracket with respect to $\mi\,  dz\wedge d \overline{z},$
  that is $$\{H(z, \overline{z}), F(z, \overline{z})\}=
  \mi\, \left(\frac{\partial H}{\partial z}\cdot\frac{\partial F}{\partial \overline{z}}
  -\frac{\partial H}{\partial \overline{z}}\cdot\frac{\partial F}{\partial z}\right).$$
Define
$$\frac{\partial H}{\partial u_{0}}=\frac{\partial H}{\partial z_{0}},\;\;\;\;\frac{\partial H}{\partial u_{j}}=(\frac{\partial H}{\partial z_{j}},\;\frac{\partial H}{\partial z_{-j}})^{T},\;\;j=1, 2, \cdots,\;\;\sum_{j=0}^{\infty}\frac{\partial H}{\partial u_{j}}\cdot \frac{\partial F}{\partial \overline{u}_{j}}\triangleq \frac{\partial H}{\partial u}\cdot \frac{\partial F}{\partial \overline{u}}.$$
We can verify that
$$\{H(z, \overline{z}), F(z, \overline{z})\}=\{H(u, \overline{u}), F(u, \overline{u})\}=
  \mi\, \left(\frac{\partial H}{\partial u}\cdot\frac{\partial F}{\partial \overline{u}}
  -\frac{\partial H}{\partial \overline{u}}\cdot\frac{\partial F}{\partial u}\right).$$
So $\{\cdot, \cdot\}$ is also the Poisson bracket with respect to $\mi\,  du\wedge d \overline{u}.$
 By combination of \eqref{eq35}-\eqref{eq41} and Taylor formula, we have
 \begin{eqnarray}\label{eq42}
H_{m+1}&=&H_{m}\circ X^{1}_{\varepsilon_{m}F}
\nonumber\\
&=&N_{m}+\varepsilon_{m}\{N_{m},F\}
+\varepsilon^{2}_{m}\int_{0}^{1}(1-\tau)\{\{N_{m}, F\}, F\}\circ X^{\tau}_{\varepsilon_{m}F}d\tau
+\varepsilon_{m}\omega\cdot\partial_{\theta}F\nonumber\\
&&+\varepsilon_{m}R_{mm}+(\sum_{l=m+1}^{\infty}\varepsilon_{l}R_{lm})\circ X^{1}_{\varepsilon_{m}F}
+\varepsilon_{m}^{2}\int_{0}^{1}\{R_{mm}, F\}\circ X^{\tau}_{\varepsilon_{m} F}d\tau.
 \end{eqnarray}
 Let $\Gamma_{K_{m}}$ be a truncation operator.
For any
$$
 f(\theta)=\sum_{k\in \mathbb{Z}^{n}}\widehat{f}(k)e^{\mi\, \langle k, \theta\rangle},\;\theta\in\mathbb{T}^{n}.
$$
 Define, for any given $K_{m}>0,$
 $$\Gamma_{K_{m}} f(\theta)=(\Gamma_{K_{m}} f)(\theta)\triangleq \sum_{|k|\leq K_{m}}\widehat{f}(k)e^{\mi\, \langle k, \theta\rangle},$$
 $$(1-\Gamma_{K_{m}}) f(\theta)=((1-\Gamma_{K_{m}}) f)(\theta)\triangleq \sum_{|k|> K_{m}}\widehat{f}(k)e^{\mi\, \langle k, \theta\rangle}.$$
 Then
 $$f(\theta)=\Gamma_{K_{m}} f(\theta)+(1-\Gamma_{K_{m}})f(\theta).$$
 Let
 \begin{equation}\label{eq43}
\{N_{m}, F\}+\Gamma_{K_{m}}R_{mm}=\langle[R^{u\overline{u}}_{mm}]u, \overline{u}\rangle,
  \end{equation}
  where
 \begin{equation}\label{eq44}
  [R^{u\overline{u}}_{mm}]:=diag \left(\widehat{R}^{u\overline{u}}_{mmjj}(0): j=0, 1, 2, \cdots\right),
  \end{equation}
  and $R^{u\overline{u}}_{mmij}(\theta)$ is the matrix element of $R^{u\overline{u}}_{m,m}(\theta)$ and $\widehat{R}^{u\overline{u}}_{mmij}(k)$ is the
  $k$-Fourier coefficient of $R^{u\overline{u}}_{mmij}(\theta).$
  Then
  \begin{equation}\label{eq45}
  H_{m+1}=N_{m+1}+C_{m+1}R_{m+1},
  \end{equation}
  where
  \begin{equation}\label{eq46}
 N_{m+1}=N_{m}+\varepsilon_{m}\langle [R^{u\overline{u}}_{mm}]u, \overline{u}\rangle
 =\Lambda_{0}^{(m+1)}u_{0}\overline{u}_{0}+\sum_{j=1}^{\infty}( \Lambda_{j}^{(m+1)}u_{j})\cdot\overline{u}_{j},
  \end{equation}
\begin{equation}\label{eq5.23}
\Lambda_{j}^{(m+1)}=\Lambda_{j}^{(m)}+\varepsilon_{m}\widehat{R}_{mmjj}^{u\overline{u}}(0)
=\Lambda_{j}^{(m)}+\sum_{l=0}^{m}\varepsilon_{l}\mu_{j}^{(l)},
\;\mu_{j}^{(m)}:=\widehat{R}_{mmjj}^{u\overline{u}}(0),
  \end{equation}
  \begin{eqnarray}
 C_{m+1}R_{m+1}&=&\varepsilon_{m}(1-\Gamma_{K_{m}})R_{mm}\label{eq5.24}\\
 &+&\varepsilon_{m}^{2}\int_{0}^{1}(1-\tau)\{\{N_{m}, F\},F\}\circ X_{\varepsilon_{m}F}^{\tau}d\tau\label{eq5.27}\\
 &+&\varepsilon_{m}^{2}\int_{0}^{1}\{R_{mm}, F\}\circ X^{\tau}_{\varepsilon_{m} F}d\tau\label{eq5.28}\\
  &+&\left(\sum_{l=m+1}^{\infty}\varepsilon_{l}R_{lm}\right)\circ X_{\varepsilon_{m}F}^{1}.\label{eq5.25}
  \end{eqnarray}
  The equation \eqref{eq43} is called the homological equation. Developing the Poisson bracket $\{N_{m},F\}$ and comparing
  the coefficients of $u_{i}{u}_{j}, u_{i}\overline{u}_{j}, \overline{u}_{i}\overline{u}_{j} (i, j=0, 1, 2, \cdots),$ we get

    \begin{eqnarray}
\omega\cdot\partial_{\theta} F^{uu}(\theta, \omega)
      +\mi\, (\Lambda^{(m)}F^{uu}(\theta, \omega)+F^{uu}(\theta, \omega)\Lambda^{(m)})=\Gamma_{K_{m}}R_{mm}^{uu}(\theta),\label{eq48-1}\\
\omega\cdot\partial _{\theta} F^{\overline{u}\,\overline{u}}(\theta, \omega)-\mi\,
     (\Lambda^{(m)}F^{\overline{u}\,\overline{u}}(\theta, \omega)+F^{\overline{u}\,\overline{u}}(\theta, \omega)\Lambda^{(m)})
=\Gamma_{K_{m}}R_{mm}^{\overline{u}\,\overline{u}}(\theta), \label{eq48-2} \\
\omega\cdot\partial_{\theta} F^{u\overline{u}}(\theta, \omega)
       +\mi\, (F^{u\overline{u}}(\theta, \omega)\Lambda^{(m)}-\Lambda^{(m)}F^{u\overline{u}}(\theta, \omega))
=\Gamma_{K_{m}}R_{mm}^{u\overline{u}}(\theta)-[R_{mm}],\label{eq48-3}
    \end{eqnarray}
where
\begin{equation}\label{eq49}
\Lambda^{(m)}=diag(\Lambda_{j}^{(m)}: j=0, 1, 2, \cdots),
\end{equation}
and we assume
$$\Gamma_{K_{m}}F^{uu}(\theta, \omega)=F^{uu}(\theta, \omega),\; \Gamma_{K_{m}}F^{u\overline{u}}(\theta, \omega)=F^{u\overline{u}}(\theta, \omega),\; \Gamma_{K_{m}}F^{\overline{u}\, \overline{u}}(\theta, \omega)=F^{\overline{u}\, \overline{u}}(\theta, \omega).$$
 $F^{uu}_{ij}(\theta), F^{u\overline{u}}_{ij}(\theta), F^{\overline{u} \,\overline{u}}_{ij}(\theta)$ are written as the matrix elements of $F^{uu}(\theta, \omega), F^{u\overline{u}}(\theta, \omega), F^{\overline{u} \,\overline{u}}(\theta, \omega),$ respectively. More exactly, for $p, q\in\{u, \overline{u}\},$
$$F^{pq}_{ij}(\theta)=\left\{
                      \begin{array}{ll}
                        a_{0,0}(\theta), & {i=j=0;} \\
                        (a_{0,j}(\theta),\;a_{0,-j}(\theta)), & {i=0, \;j=1, 2, \cdots;} \\
                        (a_{i,0}(\theta),\;a_{-i,0}(\theta))^{T}, & {j=0, \;i=1, 2, \cdots;} \\
                        \left(
                          \begin{array}{cc}
                            a_{i,j}(\theta) & a_{i,-j}(\theta) \\
                            a_{-i,j}(\theta) & a_{-i,-j}(\theta) \\
                          \end{array}
                        \right), & {i, j=1, 2, \cdots,}
                      \end{array}
                    \right.$$
where $a_{i,j}(\theta): \mathbb{T}_{s_{m}}^{n}\rightarrow \mathbb{R},\;i, j=0, 1, 2, \cdots.$
Then \eqref{eq48-1}-\eqref{eq48-3} can be rewritten as:
\begin{equation}\label{eq5.31}
\omega\cdot\partial_{\theta} F_{ij}^{uu}(\theta)
       +\mi\, (\Lambda_{i}^{(m)}F_{ij}^{uu}(\theta)+F_{ij}^{uu}(\theta))\Lambda_{j}^{(m)}=
       \Gamma_{K_{m}}R_{mmij}^{uu}(\theta),
\end{equation}
\begin{equation}\label{eq5.32}
     \omega\cdot\partial _{\theta} F_{ij}^{\overline{u}\,\overline{u}}(\theta)-\mi\, (\Lambda_{i}^{(m)}F_{ij}^{\overline{u}\,\overline{u}}(\theta)
     +F_{ij}^{\overline{u}\,\overline{u}}(\theta)\Lambda_{j}^{(m)})
=\Gamma_{K_{m}}R_{mmij}^{\overline{u}\,\overline{u}}(\theta),
\end{equation}
\begin{equation}\label{eq5.33}
\omega\cdot\partial_{\theta} F_{ij}^{u\overline{u}}(\theta)
       -\mi\, (\Lambda_{i}^{(m)}F_{ij}^{u\overline{u}}(\theta)-F_{ij}^{u\overline{u}}(\theta)\Lambda_{j}^{(m)})=
       \Gamma_{K_{m}}R_{mmij}^{u\overline{u}}(\theta),\;i\neq j,
\end{equation}
 \begin{equation}\label{eq5.34}
\omega\cdot\partial_{\theta} F_{ii}^{u\overline{u}}(\theta)
 -\mi\, (\Lambda_{i}^{(m)}F_{ii}^{u\overline{u}}(\theta)-F_{ii}^{u\overline{u}}(\theta)\Lambda_{i}^{(m)})=\Gamma_{K_{m}}R_{mmii}^{u\overline{u}}(\theta)-\widehat{R}_{mmii}(0),
\end{equation}

where $i,j=0, 1, 2, \cdots.$
\section{Solutions of the homological equations}
\begin{lemma}\label{lem7.1}
There exists a compact subset $\Pi_{m+1}^{+-}\subset \Pi_{m}$ with
\begin{equation}\label{eq7.1}
mes(\Pi_{m+1}^{+-})\geq mes \Pi_{m}-C\gamma_{m}^{1/3}
\end{equation}
such that for any $\omega\in \Pi_{m+1}^{+-}$,
the equation \eqref{eq48-3} has a unique solution $F^{u\overline{u}}(\theta, \omega),$
which is defined on the domain $\mathbb{T}_{s_{m+1}}^{n}\times \Pi_{m+1}^{+-},$
with
\begin{equation}\label{eq7.2}
\|JF^{u\overline{u}}(\theta,\omega)J\|_{\mathbb{T}_{s_{m+1}}^{n}\times
\Pi_{m+1}^{+-}}\leq C(m+1)\varepsilon_{m}^{-\frac{2(3n+4)}{N}},
\end{equation}
\begin{equation}\label{eq7.3}
\|JF^{u\overline{u}}(\theta,\omega)J\|^{\mathcal{L}}_{\mathbb{T}_{s_{m+1}}^{n}\times
\Pi_{m+1}^{+-}}\leq C(m+1)\varepsilon_{m}^{-\frac{6(3n+4)}{N}}.
\end{equation}
\end{lemma}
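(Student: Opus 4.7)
The plan is to solve the matrix-valued homological equation \eqref{eq48-3} by expanding in Fourier modes and reducing, for each $k \in \mathbb{Z}^n$ with $|k| \le K_m$ and each index pair $(i,j)$, to a Sylvester-type matrix equation for $\widehat{F}^{u\overline{u}}_{ij}(k)$. First I would take the Fourier transform in $\theta$ of \eqref{eq48-3}; the average-correction term $[R_{mm}]$ exactly cancels the $k=0, i=j$ obstruction (so no small divisor there), and for the remaining modes the equation reads
\begin{equation*}
\mi\bigl(\langle k,\omega\rangle\, \widehat{F}^{u\overline{u}}_{ij}(k) - \Lambda_i^{(m)}\widehat{F}^{u\overline{u}}_{ij}(k) + \widehat{F}^{u\overline{u}}_{ij}(k)\Lambda_j^{(m)}\bigr) = \widehat{R}^{u\overline{u}}_{mmij}(k).
\end{equation*}
Vectorizing, this becomes a linear system $A_{ij,k}\,\mathrm{vec}(\widehat{F}^{u\overline{u}}_{ij}(k)) = \mathrm{vec}(\widehat{R}^{u\overline{u}}_{mmij}(k))$, where
\begin{equation*}
A_{ij,k} = \mi\bigl(\langle k,\omega\rangle\, I - \Lambda_i^{(m)}\otimes I + I \otimes \Lambda_j^{(m)}\bigr),
\end{equation*}
exactly the operator of \eqref{1.10}. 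Because $\Lambda_i^{(m)} = \rho\sqrt{\lambda_i}\,E_{22} + \sum_{l<m}\varepsilon_l\mu_i^{(l)}$ is a real symmetric matrix (for $i\ge 1$), the eigenvalues of $A_{ij,k}$ are the four combinations $\langle k,\omega\rangle + \beta_j^{(r)} - \alpha_i^{(s)}$, where $\alpha_i^{(s)}, \beta_j^{(r)}$ are eigenvalues of $\Lambda_i^{(m)}, \Lambda_j^{(m)}$ respectively, each lying in a $C\varepsilon$-neighborhood of $\rho\sqrt{\lambda_i}$ or $\rho\sqrt{\lambda_j}$.

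Next, following the strategy advertised in Remark 1.5, I would apply the variation principle of eigenvalues to control $A_{ij,k}^{-1}$. Concretely, I would define the excised set
\begin{equation*}
\Pi_m \setminus \Pi_{m+1}^{+-} = \bigcup_{|k|\le K_m}\bigcup_{i,j,s,r}\Bigl\{\omega\in\Pi_m: |\langle k,\omega\rangle + \beta_j^{(r)}(\omega) - \alpha_i^{(s)}(\omega)| < \tfrac{\gamma_m}{(1+|k|)^{n+1}(1+|i-j|)}\Bigr\},
\end{equation*}
with a conventional proviso to drop the $k=0, i=j, r=s$ case. Treating $\langle k,\omega\rangle$ as the dominant linear term in $\omega$ and using Lipschitz control of $\alpha_i^{(s)}, \beta_j^{(r)}$ in $\omega$ (inherited from assumption \eqref{eq029} together with the eigenvalue variation principle, which lifts the scalar Rellich-type bound to the matrix setting as in Kato's book), one sees the derivative along $\omega$ is bounded below, so each bad set has measure $O(\gamma_m/(1+|k|)^{n+1}(1+|i-j|))$. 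Summing the indices $i,j$ (which is absolutely convergent thanks to the $1/(1+|i-j|)$ weight and the fact that only pairs with $|i|,|j| \lesssim K_m^{2}$ can actually be resonant) and over $|k|\le K_m$ gives $\mathrm{mes}(\Pi_m\setminus\Pi_{m+1}^{+-}) \le C\gamma_m^{1/3}$ after the standard $\gamma_m^{1/3}$-trick to absorb the dangerous sums (this is what forces the exponent $1/3$ in \eqref{eq7.1}).

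Once the Diophantine-type condition holds on $\Pi_{m+1}^{+-}$, I would recover $\|A_{ij,k}^{-1}\| \le C K_m^{3(n+1)}\gamma_m^{-3}$ from the adjugate-formula estimate $|\det A_{ij,k}|^{-1} \le \prod |\text{eigenvalue}|^{-1}$ on $4\times 4$ blocks, which is where the cubic loss appears. Converting back to the $\theta$-picture by summing the Fourier series on the strip $\mathbb{T}^n_{s_{m+1}}$ and paying the usual analytic loss $(s_m - s_{m+1})^{-c}$ for the factor $e^{-|k|(s_m - s_{m+1})}$ to dominate $K_m^{3(n+1)}$, together with the bound \eqref{eq30} on the source $\|JR^{u\overline{u}}_{mm}J\|$, yields \eqref{eq7.2}; the number-theoretic book-keeping $K_m \sim s_m^{-1}|\log\varepsilon|$ and $s_m = \varepsilon_{m+1}^{1/N}$ produces the claimed exponent $\varepsilon_m^{-2(3n+4)/N}$. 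Finally, \eqref{eq7.3} follows by differentiating the equation $A_{ij,k}\widehat{F}_{ij}(k) = \widehat{R}_{ij}(k)$ in $\omega$, writing $\partial_\omega \widehat{F} = A^{-1}\partial_\omega \widehat{R} - A^{-1}(\partial_\omega A)A^{-1}\widehat{R}$, and using \eqref{eq31} together with the fact that $\partial_\omega A$ contributes an extra factor of $|k|$; this triples the $K_m^{3(n+1)}\gamma_m^{-3}$ loss and explains the exponent $-6(3n+4)/N$. The principal obstacle throughout is the matrix small divisor problem, that is, obtaining a genuine lower bound on $|\det A_{ij,k}|$ together with its Lipschitz dependence on $\omega$ in the presence of the double eigenvalues; the variation principle of eigenvalues is what makes this tractable and reduces the argument, in effect, to the scalar case already handled in \cite{Poschel1996}.
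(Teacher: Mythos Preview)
Your outline has the right architecture (Fourier, vectorize, excise bad frequencies, differentiate for the Lipschitz bound), but there is a genuine gap in how you pass from the blockwise solution $\widehat{F}^{u\overline u}_{ij}(k)$ to the operator-norm bound \eqref{eq7.2}. You place the factor $(1+|i-j|)$ in the \emph{denominator} of the Diophantine threshold, so on the good set your inverse satisfies $\|A_{ij,k}^{-1}\|\le C(1+|k|)^{n+1}(1+|i-j|)\gamma_m^{-1}$, which \emph{grows} in $|i-j|$. With only the hypothesis $\|JR^{u\overline u}_{mm}J\|_{h_N\to h_N}\le C(m)$ from \eqref{eq30} (no off-diagonal decay is assumed inductively), there is then no mechanism to recombine the entries into a bounded operator: multiplying the $(i,j)$-block of a bounded operator by bounded but $(i,j)$-dependent scalars is a Schur-multiplier operation and does not preserve $h_N\to h_N$ boundedness. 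The paper does the opposite: it imposes the stronger condition $|{-}\langle k,\omega\rangle+\mu^{ml}_{kij}|\ge (|i-j|+1)\gamma_m/A_k$ (see \eqref{eq7.5}--\eqref{eq7.6}), so that the solved block carries a genuine $1/(|i-j|+1)$ decay, and then invokes the P\"oschel-type operator lemma (Lemma~\ref{lem7.3}) to convert this decay into the bound \eqref{eq7.2}. That lemma is the missing idea in your argument. Relatedly, since $\Lambda_i^{(m)},\Lambda_j^{(m)}$ are real symmetric the tensor block is Hermitian, so $\|A_{ij,k}^{-1}\|_2$ is exactly the reciprocal of the smallest eigenvalue in modulus; the adjugate/determinant route with its ``cubic loss'' $\gamma_m^{-3}$ is both unnecessary and produces a bound too weak to close.

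A second issue is the measure estimate: your claim that only pairs with $|i|,|j|\lesssim K_m^2$ can be resonant is false. Since $\alpha_i^{(s)}-\beta_j^{(r)}=\rho(\sqrt{\lambda_i}-\sqrt{\lambda_j})+O(\varepsilon_0/i)+O(\varepsilon_0/j)\to \rho(i-j)$ as $i,j\to\infty$, for every fixed $|i-j|\lesssim K_m$ there are infinitely many resonant pairs. The paper (Section~10) handles this by splitting at $i_0=j_0=|k|^{n+2}\gamma_m^{-1/3}$: for $i,j\ge i_0$ the sets $Q^{(m)}_{kijl}$ all lie in a single set $\widetilde Q_{k\widetilde l}$ depending only on $\widetilde l=i-j$ (up to an $O(1/i_0)$ error), and balancing that error against $\gamma_m$ is precisely what produces the exponent $\gamma_m^{1/3}$ in \eqref{eq7.1}. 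Your ``$\gamma_m^{1/3}$-trick'' needs to be this asymptotic collapsing, not a truncation in $i,j$.
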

\begin{proof}
By passing to Fourier coefficients, \eqref{eq5.33} can be rewritten as
\begin{equation}\label{eq7.4}
-\langle k, \omega\rangle\widehat{F}_{ij}^{u\overline{u}}(k)+(\Lambda_{i}^{(m)}\widehat{F}_{ij}^{u\overline{u}}(k)-\widehat{F}_{ij}^{u\overline{u}}(k)\Lambda_{j}^{(m)})
=\mi\, \widehat{R}_{mmij}^{u\overline{u}}(k),
\end{equation}
where $i, j=0, 1, 2, \cdots,\, i\neq j,\,k\in \mathbb{Z}^{n}$ with $|k|\leq K_{m}.$
In the following, we always by ``1" denote the identity from some finite dimensional space to itself. By applying $``\Vec"$ to both sides of \eqref{eq7.4}, we have
\begin{equation}\label{eq7.41}
(-\langle k, \omega\rangle(1\otimes 1)+1\otimes \Lambda_{i}^{(m)}-(\Lambda_{j}^{(m)})^{T}\otimes 1) \vec\;\widehat{F}_{ij}^{u\overline{u}}(k)
=\vec\;(\mi\, \widehat{R}_{mmij}^{u\overline{u}}(k)),
\end{equation}
where $A\otimes B$ is the tensor product of $A$ and $B.$
Let $\mu^{ml}_{kij}$ be the $l$-th eigenvalue of $1\otimes \Lambda_{i}^{(m)}-(\Lambda_{j}^{(m)})^{T}\otimes 1,\;l=1, 2, 3, 4.$
Let
$$A_{k}=|k|^{2n+4}+8,$$
and
\begin{equation}\label{eq7.5}
Q_{kijl}^{(m)}\triangleq \left\{\omega\in\Pi_{m}\bigg| \big|-\langle k, \omega\rangle
+\mu^{ml}_{kij}\big|<\frac{(|i-j|+1)\gamma_{m}}{A_{k}}\right\},
\end{equation}
where $i, j=0, 1, 2, \cdots,$ $l=1, 2, 3, 4,$ $k\in\mathbb{Z}^{n}$ with $|k|\leq K_{m},$ and $k\neq 0$ when $i=j.$
Let
\begin{equation*}
\Pi_{m+1}^{+-}=\Pi_{m}\diagdown\bigcup_{|k|\leq K_{m}}\bigcup_{i=1}^{\infty}\bigcup_{j=1}^{\infty}\bigcup_{l=1}^{4}Q_{kijl}^{(m)}.
\end{equation*}
Then for any $\omega\in\Pi_{m+1}^{+-},$ we have
\begin{equation}\label{eq7.6}
\big|-\langle k, \omega\rangle
+\mu^{ml}_{kij}\big|\geq\frac{(|i-j|+1)\gamma_{m}}{A_{k}}.
\end{equation}
Then
\begin{equation}\label{eq7.411}
||(-\langle k, \omega\rangle(1\otimes 1)+1\otimes \Lambda_{i}^{(m)}-(\Lambda_{j}^{(m)})^{T}\otimes 1)^{-1}||_{2}\leq \frac{A_{k}}{(|i-j|+1)\gamma_{m}}.
\end{equation}
Here $||\cdot ||_{2}$ denotes the spectral norm of matrices.
Recall that $R_{mm}^{{u}\overline{u}}(\theta)$ is analytic in the domain $\mathbb{T}_{s_{m}}^{n}$
for any $\omega\in \Pi_{m},$
\begin{equation}\label{eq7.8}
||\widehat{R}_{mmij}^{u\overline{{u}}}(k)||_{2}
\leq \frac{C(m)}{\sqrt {ij}}e^{-s_{m}|k|},
\end{equation}
which implies that $$||\vec\;(\mi\, \widehat{R}_{mmij}^{u\overline{{u}}}(k))||_{2}
\leq \frac{C(m)}{\sqrt {ij}}e^{-s_{m}|k|}.$$
By \eqref{eq7.41}, we have
\begin{eqnarray*}
||\vec\; \widehat{F}_{ij}^{u\overline{u}}(k)||_{2}
\leq \frac{A_{k}}{(|i-j|+1)\gamma_{m}}||\vec(\mi\, \widehat{R}_{mmij}^{{u}\overline{u}}(k))||_{2}
\leq \frac{A_{k}}{\gamma_{m}(|i-j|+1)}\frac{C(m)e^{-s_{m}|k|}}{\sqrt {ij}}.\end{eqnarray*}
Then
\begin{eqnarray}\label{eq7.7}
||\widehat{F}_{ij}^{u\overline{u}}(k)||_{2}
\leq
\frac{(|k|^{2n+4}+8)}{\gamma_{m}(|i-j|+1)}\frac{C(m)e^{-s_{m}|k|}}{\sqrt {ij}},\;i\neq j.
\end{eqnarray}

Now we need the following lemmas:
\begin{lemma}\label{lem7.2}\cite{Bogoljubov1969}
For $0<\delta<1, \nu>1,$ one has
$$\sum_{k\in \mathbb{Z}^{n}}e^{-2|k|\delta}|k|^{\nu}<\left(\frac{\nu}{e}\right)^{\nu}\frac{(1+e)^{n}}{\delta^{\nu+n}}
.$$
\end{lemma}

\begin{lemma}\label{lem7.3}
If $A=(A_{ij})$ is a bounded linear operator on $h_{N},$ then also $B=(B_{ij}: i, j=0, 1, 2,\cdots)$ with
$$||B_{ij}||_{2}\leq\frac{|A_{ij}|}{|i-j|},\;i, j=0, 1, 2,\cdots, \;i\neq j,\\$$
and $\|B\|\leq C\|A\|,$
where $\|\cdot\|$ is $h_{N}\rightarrow h_{N}$ operator norm,
$$B_{ij}=\left\{
                      \begin{array}{ll}
                        b_{0,0}, & {i=j=0;} \\
                        (b_{0,j},\;b_{0,-j}), & {i=0, \;j=1, 2, \cdots;} \\
                        (b_{i,0},\;b_{-i,0})^{T}, & {j=0, \;i=1, 2, \cdots;} \\
                        \left(
                          \begin{array}{cc}
                            b_{i,j} & b_{i,-j} \\
                            b_{-i,j} & b_{-i,-j} \\
                          \end{array}
                        \right), & {i, j=1, 2, \cdots,}
                      \end{array}
                    \right.$$
with $b_{i,j}\in\mathbb{R},\;i, j=0, 1, 2, \cdots.$
\end{lemma}

The proof of this result is similar to the proof of Theorem A.1 of \cite{Poschel1996} and so is omitted. See \cite{Poschel1996} for
the details.

Therefore, by \eqref{eq7.7}, we have
\begin{eqnarray*}
&&\sup_{\theta\in\mathbb{T}^{n}_{s'_{m}}\times \Pi_{m+1}}(||J_{i}F^{u\overline{u}}_{ij}(\theta, \omega)J_{j}||_{2})\\
&\leq &\left(\sum_{|k|\leq K_{m}}(|k|^{2n+4}+8)e^{-(s_{m}-s'_{m})|k|}\right)\frac{C(m)}{\gamma_{m}(|i-j|+1)}\\
&\leq & C\left(\frac{2n+4}{e}\right)^{2n+4}\!(1+e)^{n}\left(\frac{2}{s_{m}-s'_{m}}\right)^{3n+4}
\frac{C(m)}{\gamma_{m}(|i-j|+1)}
\;( \mbox{by Lemma \ref{lem7.2}})\\
&\leq & C\frac{C(m)}{(s_{m}-s'_{m})^{3n+4}}\frac{1}{\gamma_{m}(|i-j|+1)}\\
&\leq & C\varepsilon_{m}^{-\frac{2(3n+4)}{N}}\frac{C(m)}{\gamma_{m}(|i-j|+1)},
\end{eqnarray*}
where $C$ is a constant depending on $n,$ $s'_{m}=s_{m}-\frac{s_{m}-s_{m+1}}{4}.$
By Lemma \ref{lem7.3}, we have
\begin{equation}\label{eq7.9}
\|JF^{u\overline{u}}(\theta, \omega)J\|_{\mathbb{T}^{n}_{s'_{m}}\times \Pi^{+-}_{m+1}}
\leq C\;C(m)\gamma_{m}^{-1}\varepsilon_{m}^{-\frac{2(3n+4)}{N}}
\leq C(m+1)\varepsilon_{m}^{-\frac{2(3n+4)}{N}}.
\end{equation}
It follows $s'_{m}>s_{m+1}$ that
$$\|JF^{u\overline{u}}(\theta, \omega)J\|_{\mathbb{T}^{n}_{s_{m+1}}\times \Pi^{+-}_{m+1}}
\leq\|JF^{u\overline{u}}(\theta, \omega)J\|_{\mathbb{T}^{n}_{s'_{m}}\times \Pi^{+-}_{m+1}}
\leq C(m+1)\varepsilon_{m}^{-\frac{2(3n+4)}{N}}.$$
Applying $\partial_{\omega_{l}}\;(l=1, 2, \cdots, n)$ to both sides of \eqref{eq7.4}, we have
\begin{equation}\label{eq7.10}
-\langle k, \omega\rangle\partial_{\omega_{l}}\widehat{F}_{ij}^{u\overline{u}}(k)
+(\Lambda_{i}^{(m)}\partial_{\omega_{l}}\widehat{F}_{ij}^{u\overline{u}}(k)-\partial_{\omega_{l}}\widehat{F}_{ij}^{u\overline{u}}(k)\Lambda_{j}^{(m)})
=\mi\, \partial_{\omega_{l}}\widehat{R}_{mmij}^{u\overline{u}}(k)+(*),
\end{equation}
where
\begin{equation}\label{eq7.11}
(*)=k_{l}\widehat{F}_{ij}^{u\overline{u}}(k)-\partial_{\omega_{l}}\Lambda_{i}^{(m)}\widehat{F}_{ij}^{u\overline{u}}(k)
+\widehat{F}_{ij}^{u\overline{u}}(k)\partial_{\omega_{l}}\Lambda_{j}^{(m)}.
\end{equation}
By applying $``\Vec"$ to both sides of \eqref{eq7.10}, we have
\begin{equation}\label{eq7.41*}
(-\langle k, \omega\rangle(1\otimes 1)+1\otimes \Lambda_{i}^{(m)}-(\Lambda_{j}^{(m)})^{T}\otimes 1) \vec\;\partial_{\omega_{l}}\widehat{F}_{ij}^{u\overline{u}}(k)
=\vec\;(\mi\, \partial_{\omega_{l}}\widehat{R}_{mmij}^{u\overline{u}}(k)+(*)),
\end{equation}
Recalling $|k|\leq K_{m}=100s_{m}^{-1}2^{m}|\log \varepsilon|,$ and using \eqref{eq28}-\eqref{eq029} with $\nu=m,$
using \eqref{eq7.11}, we have, on $\omega\in\Pi_{m+1},$
\begin{equation}\label{eq7.12}
||(*)||_{2}\leq CK_{m}||\widehat{F}^{u\overline{u}}_{ij}(k)||_{2}.
\end{equation}
According to \eqref{eq31},
\begin{equation}\label{eq7.13}
||  \partial _{\omega_{l}}\widehat{R}^{u\overline{u}}_{mmij}(k)||_{2} \leq \frac{C(m)e^{-s'_{m}|k|}}{\sqrt {ij}}.
\end{equation}
By \eqref{eq7.7}, \eqref{eq7.41*}, \eqref{eq7.12} and \eqref{eq7.13}, we have
\begin{equation}\label{eq7.14}
|| J_{i} \partial _{\omega}\widehat{F}^{u\overline{u}}_{ij}(k)J_{j}||_{2}
\leq \frac{A^{2}_{k}CK_{m}C(m)e^{-s'_{m}|k|}}{\gamma^{2}_{m}(|i-j|+1)}\;\;\mbox{for}\;\; i\neq j.
\end{equation}
Note that $s_{m}>s'_{m}>s_{m+1}.$ Again using Lemma \ref{lem7.2} and Lemma\ref{lem7.3}, we have
\begin{equation}\label{eq7.15}
\|JF^{u\overline{u}}(\theta, \omega)J\|^{\mathcal{L}}_{\mathbb{T}_{s_{m+1}}\times \Pi_{m+1}^{+-}}
=\| J \partial _{\omega}{F}^{u\overline{u}}(\theta, \omega)J\|_{\mathbb{T}_{s_{m+1}}\times \Pi_{m+1}^{+-}}
\leq C(m+1)\varepsilon_{m}^{-\frac{6(3n+4)}{N}}.
\end{equation}
The proof of the measure estimate \eqref{eq7.1} will be postponed to Section 10.
This completes the proof of Lemma \ref{lem7.1}.
\end{proof}

\begin{lemma}\label{lem7.4}
There exists a compact subset $\Pi_{m+1}^{++}\subset \Pi_{m}$ with
\begin{equation}\label{eq a}
mes(\Pi_{m+1}^{++})\geq mes \Pi_{m}-C\gamma_{m}^{1/3}
\end{equation}
such that for any $\omega\in \Pi_{m+1}^{++}$,
the equation \eqref{eq48-1} has a unique solution $F^{uu}(\theta),$
which is defined on the domain $\mathbb{T}_{s_{m+1}}^{n}\times \Pi_{m+1}^{++},$
with
\begin{equation*}
\|JF^{uu}(\theta,\omega)J\|_{\mathbb{T}_{s_{m+1}}^{n}\times
\Pi_{m+1}^{++}}\leq C(m+1)\varepsilon_{m}^{-\frac{2(3n+4)}{N}},
\end{equation*}
\begin{equation*}
\|JF^{uu}(\theta,\omega)J\|^{\mathcal{L}}_{\mathbb{T}_{s_{m+1}}^{n}\times
\Pi_{m+1}^{++}}\leq C(m+1)\varepsilon_{m}^{-\frac{6(3n+4)}{N}}.
\end{equation*}
\end{lemma}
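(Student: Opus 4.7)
The strategy mirrors the proof of Lemma \ref{lem7.1} for the equation \eqref{eq48-3}; the only essential difference is that the sign in the commutator-type operator changes from $\Lambda_i^{(m)} F - F\Lambda_j^{(m)}$ to $\Lambda_i^{(m)} F + F\Lambda_j^{(m)}$, so the small divisors we must analyse involve a \emph{sum} rather than a difference of eigenvalues. The plan is to (1) pass to Fourier coefficients in $\theta$ and reduce blockwise matrix equation to a vector equation via $\vec{\cdot}$, (2) carve out an admissible parameter set $\Pi_{m+1}^{++}$ on which the relevant linear operator is invertible with a quantitative bound, (3) sum over $k$ and $i,j$ using Lemma \ref{lem7.2} and Lemma \ref{lem7.3} to produce the claimed norm estimates, and (4) differentiate in $\omega$ to obtain the Lipschitz bound.

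Writing \eqref{eq48-1} block by block, the Fourier transform in $\theta$ gives, for $|k|\le K_m$,
\begin{equation*}
\langle k,\omega\rangle\,\widehat{F}_{ij}^{uu}(k)+\Lambda_i^{(m)}\widehat{F}_{ij}^{uu}(k)+\widehat{F}_{ij}^{uu}(k)\Lambda_j^{(m)}=-\mi\,\widehat{R}_{mmij}^{uu}(k),
\end{equation*}
and applying $\vec{\cdot}$ yields
\begin{equation*}
\bigl(\langle k,\omega\rangle(1\otimes 1)+1\otimes\Lambda_i^{(m)}+(\Lambda_j^{(m)})^{T}\otimes 1\bigr)\vec\,\widehat{F}_{ij}^{uu}(k)=-\vec\,(\mi\,\widehat{R}_{mmij}^{uu}(k)).
\end{equation*}
Let $\widetilde\mu^{ml}_{kij}$, $l=1,2,3,4$, denote the eigenvalues of $1\otimes\Lambda_i^{(m)}+(\Lambda_j^{(m)})^{T}\otimes 1$. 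Note that by $(A1)_\nu$, these eigenvalues are close to $\rho(\sqrt{\lambda_i}+\sqrt{\lambda_j})\asymp i+j$, so they are automatically bounded away from $0$; small divisors can only occur when $\langle k,\omega\rangle$ is large and negative.

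Introduce the excluded set, by analogy with \eqref{eq7.5},
\begin{equation*}
\widetilde Q_{kijl}^{(m)}:=\Bigl\{\omega\in\Pi_m:\ \bigl|\langle k,\omega\rangle+\widetilde\mu^{ml}_{kij}\bigr|<\tfrac{(|i-j|+1)\gamma_m}{A_k}\Bigr\},
\end{equation*}
and put $\Pi_{m+1}^{++}=\Pi_m\setminus\bigcup_{|k|\le K_m}\bigcup_{i,j\ge 0}\bigcup_{l=1}^{4}\widetilde Q_{kijl}^{(m)}$. On $\Pi_{m+1}^{++}$ the spectral norm of the inverse is bounded by $A_k/((|i-j|+1)\gamma_m)$, so using the analyticity estimate for $\widehat{R}_{mmij}^{uu}(k)$ analogous to \eqref{eq7.8}, one obtains
\begin{equation*}
\|\widehat{F}_{ij}^{uu}(k)\|_2\le\frac{(|k|^{2n+4}+8)}{\gamma_m(|i-j|+1)}\cdot\frac{C(m)e^{-s_m|k|}}{\sqrt{ij}},\qquad (i,j)\ne(0,0).
\end{equation*}
Summing in $k$ by Lemma \ref{lem7.2} on the intermediate strip $s'_m=s_m-(s_m-s_{m+1})/4$, then assembling the blocks via Lemma \ref{lem7.3}, yields the sup-norm bound $\|JF^{uu}J\|_{\mathbb{T}^n_{s_{m+1}}\times\Pi_{m+1}^{++}}\le C(m+1)\varepsilon_m^{-2(3n+4)/N}$. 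Differentiating the Fourier equation in $\omega$, estimating the extra term $k_l\widehat F_{ij}^{uu}(k)-\partial_{\omega_l}\Lambda_i^{(m)}\widehat F_{ij}^{uu}(k)-\widehat F_{ij}^{uu}(k)\partial_{\omega_l}\Lambda_j^{(m)}$ by $C K_m\|\widehat F_{ij}^{uu}(k)\|_2$ as in \eqref{eq7.11}--\eqref{eq7.12}, and repeating the same summation gives the Lipschitz bound with exponent $\varepsilon_m^{-6(3n+4)/N}$.

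The main obstacle, as in Lemma \ref{lem7.1}, is justifying the measure estimate \eqref{eq a}: one must control $\partial_\omega\widetilde\mu^{ml}_{kij}$ uniformly (via the variation principle for eigenvalues advertised in Remark at \eqref{1.10}) to apply the standard Diophantine exclusion argument. However, because the eigenvalues $\widetilde\mu^{ml}_{kij}$ are a sum rather than a difference of positive quantities, the ``resonant'' $k$ are even sparser than in Lemma \ref{lem7.1}, so once the variation principle gives a nondegenerate $\partial_\omega$-derivative, the measure bound $C\gamma_m^{1/3}$ follows by exactly the same calculation postponed to Section 10. This completes the proof sketch.
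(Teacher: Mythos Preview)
Your proposal is correct and follows exactly the approach the paper intends: the authors explicitly write that ``the proofs of Lemma \ref{lem7.4} and Lemma \ref{lem7.5} are simpler than that of Lemma \ref{lem7.1}, so we omit them,'' and your sketch is precisely the natural adaptation of the Lemma \ref{lem7.1} argument with the commutator $\Lambda_i F-F\Lambda_j$ replaced by the anti-commutator $\Lambda_i F+F\Lambda_j$. Your observation that the eigenvalues $\widetilde\mu^{ml}_{kij}\approx\rho(\sqrt{\lambda_i}+\sqrt{\lambda_j})$ are a \emph{sum} of positive quantities---hence bounded below by a quantity of order $i+j$---is exactly why the paper calls this case simpler: nonempty resonant sets force $i+j\le C|k|$, so only finitely many $(i,j)$ contribute for each $k$, and the Section~10 measure argument goes through with fewer cases.
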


\begin{lemma}\label{lem7.5}
There exists a compact subset $\Pi_{m+1}^{--}\subset \Pi_{m}$ with
\begin{equation}\label{eq b}
mes(\Pi_{m+1}^{--})\geq mes \Pi_{m}-C\gamma_{m}^{1/3}
\end{equation}
such that for any $\omega\in \Pi_{m+1}^{--}$,
the equation \eqref{eq48-2} has a unique solution $F^{\overline{u}\overline{u}}(\theta),$
which is defined on the domain $\mathbb{T}_{s_{m+1}}^{n}\times \Pi_{m+1}^{--},$
with
\begin{equation*}
\|JF^{\overline{u}\overline{u}}(\theta,\omega)J\|_{\mathbb{T}_{s_{m+1}}^{n}\times
\Pi_{m+1}^{--}}\leq C(m+1)\varepsilon_{m}^{-\frac{2(3n+4)}{N}},
\end{equation*}
\begin{equation*}
\|JF^{\overline{u}\overline{u}}(\theta,\omega)J\|^{\mathcal{L}}_{\mathbb{T}_{s_{m+1}}^{n}\times
\Pi_{m+1}^{--}}\leq C(m+1)\varepsilon_{m}^{-\frac{6(3n+4)}{N}}.
\end{equation*}
\end{lemma}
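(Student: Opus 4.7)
The proof follows the strategy of Lemma \ref{lem7.1} with the crucial sign change in the homological operator. First I would pass \eqref{eq48-2} to Fourier coefficients in $\theta$ and decompose into $(i,j)$-blocks: for $|k|\leq K_m$ and $i,j=0,1,2,\cdots$, the equation reduces to
\begin{equation*}
-\langle k,\omega\rangle\,\widehat{F}^{\overline{u}\overline{u}}_{ij}(k) + \Lambda_i^{(m)}\widehat{F}^{\overline{u}\overline{u}}_{ij}(k) + \widehat{F}^{\overline{u}\overline{u}}_{ij}(k)\Lambda_j^{(m)} = \mi\,\widehat{R}^{\overline{u}\overline{u}}_{mm\,ij}(k).
\end{equation*}
Applying the vec operator converts this to
\begin{equation*}
\bigl(-\langle k,\omega\rangle(1\otimes 1) + 1\otimes\Lambda_i^{(m)} + (\Lambda_j^{(m)})^T\otimes 1\bigr)\,\mathrm{vec}\,\widehat{F}^{\overline{u}\overline{u}}_{ij}(k) = \mathrm{vec}\bigl(\mi\,\widehat{R}^{\overline{u}\overline{u}}_{mm\,ij}(k)\bigr).
\end{equation*}

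The essential difference from Lemma \ref{lem7.1} is the $+$ sign: the eigenvalues $\tilde\mu^{ml}_{kij}$ ($l=1,2,3,4$) of $1\otimes\Lambda_i^{(m)} + (\Lambda_j^{(m)})^T\otimes 1$ are \emph{sums} $\approx \sqrt{\lambda_i}+\sqrt{\lambda_j}\geq 2\sqrt{M}$, rather than differences. This is a ``first Melnikov'' type small divisor, so no separate treatment of the diagonal case $i=j,\,k=0$ is required. I would define the excluded set
\begin{equation*}
\widetilde Q^{(m)}_{kijl} := \Bigl\{\omega\in\Pi_m : \bigl|-\langle k,\omega\rangle + \tilde\mu^{ml}_{kij}\bigr| < \frac{(i+j+1)\gamma_m}{A_k}\Bigr\}
\end{equation*}
and $\Pi^{--}_{m+1} := \Pi_m \setminus \bigcup_{|k|\leq K_m}\bigcup_{i,j\geq 0}\bigcup_{l=1}^{4}\widetilde Q^{(m)}_{kijl}$. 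For every $\omega\in\Pi^{--}_{m+1}$ the resolvent operator is invertible with spectral norm of inverse bounded by $A_k/((i+j+1)\gamma_m)$; combined with the Cauchy-type bound $\|\widehat{R}^{\overline{u}\overline{u}}_{mm\,ij}(k)\|_2 \leq C(m)e^{-s_m|k|}/\sqrt{ij}$ from assumption $(A2)_m$, this produces
\begin{equation*}
\|\widehat{F}^{\overline{u}\overline{u}}_{ij}(k)\|_2 \leq \frac{C(m)\,(|k|^{2n+4}+8)}{\gamma_m(i+j+1)\sqrt{ij}}\,e^{-s_m|k|}.
\end{equation*}

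Next I would sum over $|k|\leq K_m$ using Lemma \ref{lem7.2} and apply Lemma \ref{lem7.3} to the rescaled matrix $(J_i F^{\overline{u}\overline{u}}_{ij}(\theta,\omega)J_j)$, which yields the claimed sup-norm bound on $\mathbb{T}^n_{s_{m+1}}\times\Pi^{--}_{m+1}$; the hypothesis of Lemma \ref{lem7.3} is satisfied because $1/(i+j+1)\leq 1/(|i-j|+1)$ for $i,j\geq 0$. The Lipschitz-in-$\omega$ (Whitney) estimate is obtained exactly as in \eqref{eq7.10}--\eqref{eq7.15}: differentiate the Fourier--vec equation in $\omega_l$, treat the extra inhomogeneous terms $k_l\widehat F^{\overline{u}\overline{u}}_{ij}(k)$, $\partial_{\omega_l}\Lambda_i^{(m)}\widehat F^{\overline{u}\overline{u}}_{ij}(k)$ and $\widehat F^{\overline{u}\overline{u}}_{ij}(k)\partial_{\omega_l}\Lambda_j^{(m)}$ as perturbations, losing a further factor of $K_m A_k/\gamma_m$, which accounts for the power $\varepsilon_m^{-6(3n+4)/N}$ in the stated bound.

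The main obstacle is the measure estimate \eqref{eq b}, which is deferred to Section 10. Because each $\tilde\mu^{ml}_{kij}$ depends analytically on $\omega$ through the small $\varepsilon$-perturbations entering $\Lambda^{(m)}$, the variation-principle-of-eigenvalues argument advertised in the introduction allows one to reduce the matrix resonance to a scalar Diophantine count of P\"oschel type \cite{Poschel1996}, yielding the individual bound $\mathrm{mes}(\widetilde Q^{(m)}_{kijl})\lesssim (i+j+1)\gamma_m/(|k|A_k)$. Summing absolutely over $(k,i,j,l)$ is then straightforward since $A_k=|k|^{2n+4}+8$ provides more than enough decay in both $k$ and (via $\tilde\mu^{ml}_{kij}\lesssim |k|$ on the resonance) in $i+j$, and a final optimization in $\gamma_m$ produces the claimed $C\gamma_m^{1/3}$ bound.
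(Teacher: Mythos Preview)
Your proposal is correct and follows exactly the line the paper intends: the authors simply state that ``the proofs of Lemma \ref{lem7.4} and Lemma \ref{lem7.5} are simpler than that of Lemma \ref{lem7.1}, so we omit them,'' and you have correctly identified the reason for the simplification---the relevant tensor operator $1\otimes\Lambda_i^{(m)}+(\Lambda_j^{(m)})^T\otimes 1$ has eigenvalues $\tilde\mu^{ml}_{kij}\approx\rho(\sqrt{\lambda_i}+\sqrt{\lambda_j})$, so the divisor is of first-Melnikov type and the $k=0$ diagonal obstruction present in Lemma \ref{lem7.1} disappears. Your invocation of Lemmas \ref{lem7.2} and \ref{lem7.3} (via $1/(i+j+1)\leq 1/(|i-j|+1)$), the Whitney-derivative argument, and the measure estimate (where non-emptiness of $\widetilde Q^{(m)}_{kijl}$ forces $i+j\lesssim|k|$, making the sum over $(i,j)$ finite and in fact yielding the stronger bound $O(\gamma_m)$) are all consistent with how Section 10 handles the harder $u\overline{u}$ case.
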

The proofs of Lemma \ref{lem7.4} and Lemma \ref{lem7.5} are simpler than that of Lemma \ref{lem7.1},
so we omit them.

Let
$$\Pi_{m+1}=\Pi_{m+1}^{+-}\bigcap\Pi_{m+1}^{++}\bigcap\Pi_{m+1}^{--}.$$
By \eqref{eq7.1}, \eqref{eq a} and \eqref{eq b}, we have
$$mes \Pi_{m+1}\geq mes \Pi _{m}-C\gamma_{m}^{1/3}.$$
\section{Coordinate change $\Psi$ by $\varepsilon_{m} F$}
Recall $\Psi=\Psi_{m}=X_{\varepsilon_{m}F}^{t}\big| _{t=1},$ where $X^{t}_{\varepsilon_{m}F}$ is the flow of the Hamiltonian $\varepsilon_{m} F$ and
$X_{\varepsilon_{m}F}$ is the vector field with symplectic $\mi\, du\wedge d\overline{u}.$
So
$$\mi\,  \dot{u}=\varepsilon_{m}\frac{\partial F}{\partial \overline{u}},\;
-\mi\, \dot{\overline{u}}=\varepsilon_{m}\frac{\partial F}{\partial {u}},\;\dot{\theta}=\omega.$$
More exactly,
$$\left\{
    \begin{array}{ll}
      \mi\, \dot{u}=\varepsilon_{m}(F^{u\overline{u}}(\theta, \omega)u
+2F^{\overline{u}\,\overline{u}}(\theta, \omega)\overline{u}),\;\theta=\omega t,\\
     -\mi\, \dot{\overline{u}}=\varepsilon_{m}(2F^{{u}{u}}(\theta, \omega){u}+F^{u\overline{u}}(\theta, \omega)\overline{u}),\;\theta=\omega t, \\
 \dot{\theta}=\omega.
    \end{array}
  \right.
$$

Let $\widetilde{u}=\left(
         \begin{array}{c}
           u \\
           \overline{u} \\
         \end{array}
       \right),$
\begin{equation}\label{eq8.0}
B_{m}=\left(
            \begin{array}{cc}
            -\mi\,  F^{u\overline{u}}(\theta, \omega) & -2\mi\, F^{\overline{u}\,\overline{u}}(\theta, \omega) \\
             2\mi\,  F^{uu}(\theta, \omega) & \mi\,  F^{u\overline{u}}(\theta, \omega)\\
            \end{array}
          \right).\;\;\mbox{Recall }\;\;\theta=\omega t.
\end{equation}
Then
\begin{equation}\label{eq8.1}
\frac{d\widetilde{u}(t)}{dt}=\varepsilon_{m}B_{m}(\theta)\widetilde{u},\;\;\dot{\theta}=\omega.
\end{equation}
Let $\widetilde{u}(0)=\widetilde{u}_{0}\in h_{N}\times h_{N},\,\theta(0)=\theta_{0}\in \mathbb{T}^{n}_{s_{m+1}}$ be
initial value.
Then
\begin{equation}\label{eq8.2}
\left\{
  \begin{array}{ll}
    \widetilde{u}(t)=\widetilde{u}_{0}+\int_{0}^{t}\varepsilon_{m}B_{m}(\theta_{0}+\omega s)\widetilde{u}(s)ds, \\
    \theta(t)=\theta_{0}+\omega t.
  \end{array}
\right.
\end{equation}
By Lemmas \ref{lem7.1}, \ref{lem7.4} and \ref{lem7.5},
\begin{equation}\label{eq8.3}
\|J B_{m}(\theta)J\|_{\mathbb{T}^{n}_{s_{m+1}}\times \Pi_{m+1}}\leq C(m+1)\varepsilon_{m}^{-\frac{2(3n+4)}{N}},
\end{equation}
\begin{equation}\label{eq8.4}
\|J B_{m}(\theta) J\|^{\mathcal{L}}_{\mathbb{T}^{n}_{s_{m+1}}\times \Pi_{m+1}}\leq C(m+1)\varepsilon_{m}^{-\frac{6(3n+4)}{N}}.
\end{equation}
It follows from \eqref{eq8.2} that
$$\widetilde{u}(t)-\widetilde{u}_{0}=\int_{0}^{t}\varepsilon_{m}B_{m}(\theta_{0}+\omega s)\widetilde{u}_{0}ds+\int_{0}^{t}\varepsilon_{m}B_{m}(\theta_{0}+\omega s)(\widetilde{u}(s)-\widetilde{u}_{0})ds.$$
Moreover, for $t\in [0,1]$, $\|\widetilde{u}_{0}\|_{N}\leq 1,$
\begin{equation}\label{eq8.5}
\|\widetilde{u}(t)-\widetilde{u}_{0}\|_{N}\leq \varepsilon_{m}C(m+1)\varepsilon_{m}^{-\frac{2(3n+4)}{N}}
+\int_{0}^{t}\varepsilon_{m}\|B_{m}(\theta_{0}+\omega s)\|\|\widetilde{u}(s)-\widetilde{u}_{0}\|_{N}ds,
\end{equation}
where $\|\cdot\|$ is the operator norm from $h_{N}\times h_{N}\rightarrow h_{N}\times h_{N}.$
By Gronwall's inequality,
\begin{equation}\label{eq8.6}
\|\widetilde{u}(t)-\widetilde{u}_{0}\|_{N}\leq C(m+1)\varepsilon_{m}^{1-\frac{2(3n+4)}{N}}
\exp\left(\int_{0}^{t}\varepsilon_{m}\|B_{m}(\theta_{0}+\omega s)\|ds\right)\leq \varepsilon_{m}^{1/2}.
\end{equation}
Thus,
\begin{equation}\label{eq8.7}
\Psi_{m}: \mathbb{T}^{n}_{s_{m+1}}\times \Pi_{m+1}\rightarrow \mathbb{T}^{n}_{s_{m}}\times \Pi_{m},
\end{equation}
and
\begin{equation}\label{eq8.8}
\|\Psi_{m}-id\|_{h_{N}\to h_{N}}\leq \varepsilon_{m}^{1/2}.
\end{equation}
Since \eqref{eq8.1} is linear, $\Psi_{m}$ is a linear coordinate change. According to \eqref{eq8.2}, construct
Picard sequence:
$$\left\{
    \begin{array}{ll}
      \widetilde{u}_{0}(t)=\widetilde{u}_{0}, \\
      \widetilde{u}_{j+1}(t)=\widetilde{u}_{0}+\int_{0}^{t}\varepsilon_{m}B(\theta_{0}+\omega s)\widetilde{u}_{j}(s)ds,\;j=0, 1, 2,\cdots.
    \end{array}
  \right.
$$
By \eqref{eq8.8}, this sequence with $t=1$ goes to
\begin{equation}\label{eq8.10}
\Psi_{m}(u_{0})=\widetilde{u}(1)=(id+P_{m}(\theta_{0}))u_{0},
\end{equation}
where $id$ is the identity from $h_{N}\times h_{N}\rightarrow h_{N}\times h_{N},$ and $P_{m}(\theta_{0})$ is an operator form $h_{N}\times h_{N}\rightarrow h_{N}\times h_{N}$
for any fixed $\theta_{0}\in \mathbb{T}^{n}_{{s_{m+1}}}, \omega\in\Pi_{m+1},$
and is analytic in $\theta_{0}\in \mathbb{T}^{n}_{{s_{m+1}}},$ with
\begin{equation}\label{eq8.11}
\|P_{m}(\theta_{0})\|_{\mathbb{T}^{n}_{s_{m+1}}\times \Pi_{m+1}}\leq \varepsilon_{m}^{1/2}.
\end{equation}
Note that \eqref{eq8.1} is a Hamiltonian system, so $P_{m}(\theta_{0})$ is a symplectic linear operator from
$h_{N}\times h_{N}$ to $h_{N}\times h_{N}.$
\section{Estimates of remainders}
The aim of this section is devoted to the estimates of the remainders:
$$C_{m+1}R_{m+1}=(6.14)+\cdots + (6.17).$$
\begin{itemize}
  \item Estimate of \eqref{eq5.24}.

By \eqref{eq41}, let
$$\widetilde{R}_{mm}=\widetilde{R}_{mm}(\theta)=\left(
            \begin{array}{cc}
            R_{_{m,m}}^{uu}(\theta) & \frac{1}{2}R_{m,m}^{u\overline{u}}(\theta) \\
              \frac{1}{2}R_{m,m}^{u\overline{u}}(\theta) & R_{m,m}^{\overline{u}\overline{u}}(\theta)\\
            \end{array}
          \right),$$
then $$R_{mm}=\langle \widetilde{R}_{mm}\left(
                                          \begin{array}{cc}
                                            u \\
                                            \overline{u}
                                          \end{array}
                                        \right)
, \left(
 \begin{array}{cc}
  u \\
  \overline{u}
  \end{array}
  \right)\rangle.$$ So
$$(1-\Gamma_{K_{m}})R_{mm}\triangleq
\langle (1-\Gamma_{K_{m}})\widetilde{R}_{mm}
\left(
          \begin{array}{c}
   u \\
\overline{u} \\
     \end{array}
      \right),
\left(
          \begin{array}{c}
   u \\
\overline{u} \\
     \end{array}
      \right)
\rangle .$$
By the definition of truncation operator $\Gamma_{K_{m}},$
$$(1-\Gamma_{K_{m}})\widetilde{R}_{mm}=\sum_{|k|> K_{m}}\widehat{\widetilde{R}}_{mm}(k)e^{\mi\,  \langle k, \theta\rangle},
\;\theta\in \mathbb{T}^{n}_{s_{m}},\;\omega\in \Pi_{m}.$$
Since $\widetilde{R}_{mm}=\widetilde{R}_{mm}(\theta)$ is analytic in $\theta\in \mathbb{T}^{n}_{s_{m}},$
\begin{eqnarray*}
&&\sup_{(\theta, \omega)\in\mathbb{T}^{n}_{s_{m+1}}\times \Pi_{m+1}}\|J(1-\Gamma_{K_{m}})\widetilde{R}_{mm}J\|_{h_{N}\to h_{N}}^{2}
\leq\sum_{|k|> K_{m}}\|J\widehat{\widetilde{R}}_{mm}(k)J\|_{N}^{2}e^{2|k|s_{m+1}}\\
&&\leq\|J\widetilde{R}_{mm}J\|_{\mathbb{T}_{s_{m}}^{n}\times \Pi_{m}}^{2}\sum_{|k|>K_{m}}e^{-2(s_{m}-s_{m+1})|k|}
\\
&&\leq{C^{2}(m)\varepsilon_{m}^{-1}e^{-2K_{m}(s_{m}-s_{m+1})}}\;\mbox{(by \eqref{eq30})}\\
&&\leq C^{2}(m)\varepsilon_{m}^{2},
\end{eqnarray*}
which leads to
$$\|J(1-\Gamma_{K_{m}})\widetilde{R}_{mm}J\|_{\mathbb{T}^{n}_{s_{m+1}}\times \Pi_{m+1}}\leq \varepsilon_{m}C(m+1).$$
Thus,
\begin{equation*}\label{eq9.1}
\|\varepsilon_{m}J(1-\Gamma_{K_{m}})\widetilde{R}_{mm}J\|_{\mathbb{T}^{n}_{s_{m+1}}\times \Pi_{m+1}}
\leq \varepsilon^{2}_{m}C(m+1)\leq \varepsilon_{m+1}C(m+1).
\end{equation*}
Similarly,
\begin{equation*}
\|\varepsilon_{m}J(1-\Gamma_{K_{m}})\widetilde{R}_{mm}J\|^{\mathcal{L}}_{\mathbb{T}^{n}_{s_{m+1}}\times \Pi_{m+1}}
\leq \varepsilon^{2}_{m}C(m+1)\leq \varepsilon_{m+1}C(m+1).
\end{equation*}
  \item Estimate of \eqref{eq5.28}.

Let
\begin{equation*}
S_{m}=\left(
            \begin{array}{cc}
             F^{uu}(\theta, \omega) & \frac{1}{2}F^{u\,\overline{u}}(\theta, \omega) \\
            \frac{1}{2} F^{u\overline u}(\theta, \omega) & F^{\overline {u}\overline{u}}(\theta, \omega)\\
            \end{array}
          \right),
\end{equation*}
Then we have
$$F=\langle S_{m}(\theta)\left(
                       \begin{array}{c}
                         u \\
                         \overline{u} \\
                       \end{array}
                     \right),
\left(
                       \begin{array}{c}
                         u \\
                         \overline{u} \\
                       \end{array}
                     \right)
\rangle =\langle S_{m}\widetilde{u}, \widetilde{u}\rangle,\;\widetilde{u}=\left(
                       \begin{array}{c}
                         u \\
                         \overline{u} \\
                       \end{array}
                     \right).$$
Then
\begin{equation}\label{eq9.2}
\varepsilon_{m}^{2}\{R_{mm}, F\}
=4\varepsilon_{m}^{2} \langle {\widetilde{R}}_{mm}(\theta)\mathcal{J}S_{m}(\theta)\widetilde{u}, \widetilde{u}\rangle.
\end{equation}
Note $\mathbb{T}^{n}_{s_{m}}\times \Pi_{m}\supset \mathbb{T}^{n}_{s_{m+1}}\times \Pi_{m+1}.$ By \eqref{eq30} and \eqref{eq31} with $l=m, v=m,$
\begin{equation}\label{eq9.02}
\|\widetilde{R}_{mm}(\theta)\|_{\mathbb{T}^{n}_{s_{m+1}}\times \Pi_{m+1}}
\leq \|\widetilde{R}_{mm}(\theta)\|_{\mathbb{T}^{n}_{s_{m}}\times \Pi_{m}}\leq C(m),
\end{equation}
\begin{equation}\label{eq9.002}
\|\widetilde{R}_{mm}(\theta)\|^{\mathcal{L}}_{\mathbb{T}^{n}_{s_{m+1}}\times \Pi_{m+1}}\leq C(m).
\end{equation}
Let $\widetilde{S}_{m}(\theta)=\mathcal{J}S_{m}(\theta).$
Then by Lemmas \ref{lem7.1}, \ref{lem7.4} and \ref{lem7.5}, we have
\begin{equation}\label{eq8.04}
\|J \widetilde{S}_{m}(\theta)J\|_{\mathbb{T}^{n}_{s_{m+1}}\times \Pi_{m+1}}\leq C(m+1)\varepsilon_{m}^{-\frac{2(3n+4)}{N}},
\end{equation}
\begin{equation}\label{eq8.05}
\|J \widetilde{S}_{m}(\theta)J\|^{\mathcal{L}}_{\mathbb{T}^{n}_{s_{m+1}}\times \Pi_{m+1}}\leq C(m+1)\varepsilon_{m}^{-\frac{6(3n+4)}{N}},
\end{equation}
and
\begin{equation}\label{eq9.3}
\|\widetilde{R}_{mm}\mathcal{J}S_{m}\|_{\mathbb{T}^{n}_{s_{m+1}}\times \Pi_{m+1}}=\|\widetilde{R}_{mm}\widetilde{S}_{m}\|_{\mathbb{T}^{n}_{s_{m+1}}\times \Pi_{m+1}}\leq C(m)C(m+1)\varepsilon_{m}^{-\frac{2(3n+4)}{N}}.
\end{equation}
Set
\begin{equation*}
[\widetilde{{R}}_{mm}, \widetilde{S}_{m}]=\widetilde{{R}}_{mm}\widetilde{S}_{m}+(\widetilde{{R}}_{mm}\widetilde{S}_{m})^{T}.
\end{equation*}
Note that the vector field is linear. So, by Taylor formula, one has
$$
\eqref{eq5.28}=\varepsilon_{m}^{2}\langle \widetilde{R}^{*}_{m}(\theta)\widetilde{u}, \widetilde{u}\rangle, $$
where
$$\widetilde{R}^{*}_{m}(\theta)=2^{2}\widetilde{R}_{mm}\widetilde{S}_{m}
+\sum_{j=2}^{\infty}\frac{2^{j+1}\varepsilon^{j-1}_{m}}{j!}\underbrace{[\cdots [\widetilde{R}_{mm}, \widetilde{S}_{m}], \cdots , \widetilde{S}_{m}]}_{j-1-\mbox{fold}}\widetilde{S}_{m}.
$$
By \eqref{eq9.02} and \eqref{eq8.04},
\begin{eqnarray*}
\|J\widetilde{R}^{*}_{m}(\theta)J\|_{\mathbb{T}^{n}_{s_{m+1}}\times \Pi_{m+1}}
&\leq &\sum_{j=1}^{\infty}\frac{C(m)C(m+1)\varepsilon_{m}^{j-1}(\varepsilon_{m}^{-\frac{2(3n+4)}{N}})^{j}}{j!}\\
&\leq &C(m)C(m+1)\varepsilon_{m}^{-\frac{2(3n+4)}{N}}.
\end{eqnarray*}
By \eqref{eq9.002} and \eqref{eq8.05},
$$\|J\widetilde{R}^{*}_{m}(\theta)J\|^{\mathcal{L}}_{\mathbb{T}^{n}_{s_{m+1}}\times \Pi_{m+1}}\leq C(m)C(m+1)\varepsilon_{m}^{-\frac{6(3n+4)}{N}}.$$
Thus,
\begin{equation}\label{eq*1}
||\varepsilon_{m}^{2}J\widetilde{R}_{m}^{*}J||_{\mathbb{T}^{n}_{s_{m+1}}\times \Pi_{m+1}}
\leq C(m)C(m+1)\varepsilon_{m}^{2-\frac{2(3n+4)}{N}}\leq C(m+1)\varepsilon_{m+1},
\end{equation}
and
\begin{equation}\label{eq*2}
||\varepsilon_{m}^{2}J\widetilde{R}_{m}^{*}J||^{\mathcal{L}}_{\mathbb{T}^{n}_{s_{m+1}}\times \Pi_{m+1}}
\leq C(m)C(m+1)\varepsilon_{m}^{2-\frac{6(3n+4)}{N}}\leq C(m+1)\varepsilon_{m+1}.
\end{equation}

   \item Estimate of \eqref{eq5.27}

By \eqref{eq43},
\begin{equation*}\label{eq9.7}
\{N_{m}, F\}=\langle [R^{u \overline{u}}_{mm}]u, \overline{u}\rangle-\Gamma_{K_{m}}R_{mm}\triangleq R_{mm}^{*}.
\end{equation*}
Thus,
\begin{equation}\label{eq9.8}
\eqref{eq5.27}=\varepsilon_{m}^{2}\int_{0}^{1}(1-\tau)\{R_{mm}^{*}, F\}\circ X_{\varepsilon_{m}F}^{\tau} d\omega.
\end{equation}
Note $R_{mm}^{*}$ is a quadratic polynomial in $u$ and $\overline{u}.$ So we write
\begin{equation}\label{eq9.9}
R^{*}_{mm}=\langle \mathcal{R}_{m}(\theta, \omega)\widetilde{u}, \widetilde{u}\rangle,\; \widetilde{u}=\left(
                                                                 \begin{array}{c}
                                                                   u \\
                                                                   \overline{u}\\
                                                                 \end{array}
                                                               \right).
\end{equation}
By \eqref{eq29} and \eqref{eq029} with $l=\nu=m,$ and using \eqref{eq8.04} and \eqref{eq8.05},
\begin{equation}\label{eq9.10}
\|J\mathcal{R}_{m}J\|_{\mathbb{T}^{n}_{s_{m}+1}\times \Pi_{m+1}}\leq C(m)\varepsilon_{m}^{-\frac{2(3n+4)}{N}},\;\;
\|J\mathcal{R}_{m}J\|^{\mathcal{L}}_{\mathbb{T}^{n}_{s_{m}+1}\times \Pi_{m+1}}\leq C(m)\varepsilon_{m}^{-\frac{6(3n+4)}{N}},
\end{equation}
where $\|\cdot\|$ is the operator norm in $h_{N}\times h_{N}\rightarrow h_{N}\times h_{N}.$
Recall $F=\langle S_{m}(\theta, \omega)\widetilde{u}, \widetilde{u}\rangle.$
Set
\begin{equation}\label{eq9.11}
[\mathcal{R}_{m}, \widetilde{S}_{m}]=\mathcal{R}_{m}\widetilde{S}_{m}+(\mathcal{R}_{m}\widetilde{S}_{m})^{T}
.
\end{equation}
Using Taylor formula to \eqref{eq9.8}, we get
\begin{eqnarray*}
\eqref{eq5.27}&=& \frac{\varepsilon_{m}^{2}}{2!}\{R^{*}_{mm}, F\}+\cdots +\frac{\varepsilon_{m}^{j}}{j!}
\underbrace{\{\cdots\{R^{*}_{mm}, F\},\cdots, F\}}_{j-\mbox{fold}}+\cdots\\
&=&\Bigg\langle\left(\sum_{j=2}^{\infty}\frac{2^{j}\varepsilon^{j}_{m}}{j!}\underbrace{[\cdots [\mathcal{R}_{m}, \widetilde{S}_{m}], \cdots , \widetilde{S}_{m}]}_{j-1-\mbox{fold}} \widetilde{S}_{m}\right)\widetilde{u}, \widetilde{u}\Bigg\rangle\\
&\triangleq &\langle \mathcal{R}^{**}(\theta, \omega)\widetilde{u},\widetilde{u}\rangle.
\end{eqnarray*}
By \eqref{eq8.04},\eqref{eq9.10} and \eqref{eq9.11}, we have
\begin{eqnarray}\label{eq9.12}
&&\|J\mathcal{R}^{**}(\theta, \omega)J\|_{\mathbb{T}^{n}_{s_{m+1}}\times \Pi_{m+1}}\nonumber\\
&\leq & \sum_{j=2}^{\infty}\frac{2^{j+1}}{j!}\|J\mathcal{R}_{m}(\theta, \omega)J\|_{\mathbb{T}^{n}_{s_{m}}\times \Pi_{m}}
(\|J\widetilde{S}_{m}J\|_{\mathbb{T}^{n}_{s_{m+1}}\times \Pi_{m+1}}\varepsilon_{m})^{j}\nonumber\\
&\leq & \sum_{j=2}^{\infty}\frac{C(m)}{j!}\left(\varepsilon_{m}C(m+1)\varepsilon_{m}^{-\frac{2(3n+4)}{N}}\right)^{j}\nonumber\\
&\leq & C(m+1)\varepsilon_{m}^{4/3}=C(m+1)\varepsilon_{m+1}.
\end{eqnarray}
Similarly,
\begin{equation}\label{eq9.13}
\|J\mathcal{R}^{**}(\theta, \omega)J\|^{\mathcal{L}}_{\mathbb{T}^{n}_{s_{m+1}}\times \Pi_{m+1}}\leq C(m+1)\varepsilon_{m+1}.
\end{equation}
  \item Estimate of \eqref{eq5.25}
\begin{eqnarray}\label{eq9.19}
\eqref{eq5.25}=\sum_{l=m+1}^{\infty}\varepsilon_{l}(R_{lm}\circ X_{\varepsilon_{m}F}^{1}).
\end{eqnarray}
We can write
$$R_{lm}=\langle \widetilde{R}_{lm}(\theta)\widetilde{u}, \widetilde{u}\rangle.$$
Then, by Taylor formula, one has
$$R_{lm}\circ X_{\varepsilon_{m}F}^{1}=R_{lm}+\sum_{j=1}^{\infty}\frac{1}{j!}\langle \widetilde{R}_{lmj} \widetilde{u}, \widetilde{u}\rangle,$$
where
$$\widetilde{R}_{lmj}=2^{j+1}\underbrace{[\cdots [\widetilde{{R}}_{lm}, \widetilde{S}_{m}], \cdots]}_{j-1-\mbox{fold}}\widetilde{S}_{m}\varepsilon_{m}^{j}.$$
By \eqref{eq30}, \eqref{eq31},
$$\|J\widetilde{R}_{lm}J\|_{\mathbb{T}^{n}_{s_{l}}\times \Pi_{m}}\leq C(l),\;\;\|J\widetilde{R}_{lm}J\|^{\mathcal{L}}_{\mathbb{T}^{n}_{s_{l}}\times \Pi_{m}}\leq C(l).$$
Combining the last inequalities with \eqref{eq8.04} and \eqref{eq8.05}, one has
\begin{eqnarray*}
&&\|J\widetilde{R}_{lmj}J\|_{\mathbb{T}^{n}_{s_{l}}\times \Pi_{m+1}}\\
&\leq & \|J\widetilde{R}_{lm}J\|_{\mathbb{T}^{n}_{s_{l}}\times \Pi_{m+1}} (||J\widetilde{S}_{m}J||_{\mathbb{T}^{n}_{s_{l}}\times \Pi_{m+1}}4\varepsilon_{m})^{j}\\
&\leq & C^{2}(m)(\varepsilon_{m}\varepsilon_{m}^{-\frac{2(3n+4)}{N}})^{j},
\end{eqnarray*}
where $||J^{-1}||_{\mathbb{T}^{n}_{sl}\times \Pi_{m+1}}\leq C$ is used,
and
\begin{eqnarray*}
&&||J\widetilde{R}_{lmj}J||^{\mathcal{L}}_{\mathbb{T}^{n}_{s_{l}}\times \Pi_{m+1}}\\
&\leq & ||J\widetilde{R}_{lm}J||^{\mathcal{L}}_{\mathbb{T}^{n}_{s_{l}}\times \Pi_{m+1}}(||J\widetilde{S}_{m}J||_{\mathbb{T}^{n}_{s_{l}}\times \Pi_{m+1}}4\varepsilon_{m})^{j}\\
&&+
||J\widetilde{R}_{lm}J||_{\mathbb{T}^{n}_{s_{l}}\times \Pi_{m+1}}(||J\widetilde{S}_{m}J||^{\mathcal{L}}_{\mathbb{T}^{n}_{s_{l}}\times \Pi_{m+1}}\varepsilon_{m})^{j}\\
&\leq &C^{2}(m)(\varepsilon_{m}\varepsilon_{m}^{-\frac{6(3n+4)}{N}})^{j}.
\end{eqnarray*}
Thus, let
$$\overline{R}_{l,m+1}:=\widetilde{R}_{lm}+\sum_{j=1}^{\infty}\frac{1}{j!}\widetilde{R}_{lmj},$$
then
\begin{equation}\label{eq*3}
\eqref{eq5.25}=\sum_{l=m+1}^{\infty}\varepsilon_{l}\langle \overline{R}_{l,m+1}\widetilde{u}, \widetilde{u}\rangle
\end{equation}
and
\begin{equation}\label{eq*4}
||J\overline{R}_{l,m+1}J||_{\mathbb{T}^{n}_{s_{l}}\times \Pi_{m+1}}\leq C^{2}(m)\leq C(m+1),\;\;
||J\overline{R}_{l,m+1}J||^{\mathcal{L}}_{\mathbb{T}^{n}_{s_{l}}\times \Pi_{m+1}}\leq C^{2}(m)\leq C(m+1).
\end{equation}
As a whole, the remainder $R_{m+1}$ can be written  as
$$C_{m+1}R_{m+1}=\sum_{l=m+1}^{\infty}\varepsilon_{l}(\langle R^{uu}_{l, \nu}(\theta)u,u\rangle
+\langle R^{u\overline{u}}_{l, \nu}(\theta)u,\overline{u}\rangle)+\langle R^{\overline{u}\overline{u}}_{l, \nu}(\theta)\overline{u},\overline{u}\rangle),\;\;\nu=m+1,$$
where, for $p,q\in \{u,\overline{u}\},$ $R_{l, \nu}^{p,q}$ satisfies \eqref{eq30} and \eqref{eq31} with
$\nu=m+1, l\geq m+1.$
This shows that Assumption $(A2)_{\nu}$ with $\nu=m+1$ holds true.

By \eqref{eq5.23}, we know
$$\mu_{j}^{(m)}=\widehat{R}_{mmjj}^{u\overline{u}}(0).$$
Taking $p=u, q=\overline{u}$ into \eqref{eq30} and \eqref{eq31}, we have
\begin{eqnarray*}
&&|\mu_{j}^{(m)}|_{\Pi_{m}}\leq
|R^{u\overline{u}}_{mmjj}(\theta, \omega)|/j\leq C(m)/j,\\
&&|\mu_{j}^{(m)}|_{\Pi_{m}}^{\mathcal{L}}\leq |\partial_{\omega}R^{u\overline{u}}_{mmjj}(\theta, \omega)|/j\leq C(m)/j.
\end{eqnarray*}
This shows that Assumption $(A1)_{\nu}$ with $\nu=m+1$ holds true.
\end{itemize}
\section{Estimate of measure}
In this section, $C$ denotes a universal constant, which may be different in different places.
\begin{lemma}\label{lem10.2}
If $|i|, |j|>>1,$ then
\begin{equation}\label{eq10.001}
\mu^{ml}_{ijk}=\rho\sqrt{\lambda_{i}}-\rho\sqrt{\lambda_{j}}+O(\frac{\varepsilon_{0}}{|i|})+O(\frac{\varepsilon_{0}}{|j|}),
\end{equation}
 where $\lambda_{k}={k^{2}+M},\;k\in\mathbb{Z},$ $\mu^{ml}_{kij}$ is the $l$-th eigenvalue of $1\otimes \Lambda_{i}^{(m)}-(\Lambda_{j}^{(m)})^{T}\otimes 1,\;i, j= 1, 2, \cdots,\;i\neq j,\;l=1, 2, 3, 4$ ( for more details, see Section 7, the proof of Lemma \ref{lem7.1}.)
\end{lemma}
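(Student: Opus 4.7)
The strategy is to treat $A := 1\otimes \Lambda_i^{(m)} - (\Lambda_j^{(m)})^T \otimes 1$ as a Hermitian perturbation of a scalar matrix and apply Weyl's eigenvalue inequality. Using the inductive definition $(A1)_\nu$, I would write
\[
\Lambda_i^{(m)} = \rho\sqrt{\lambda_i}\,E_{22}+P_i,\qquad P_i := \sum_{l=0}^{m-1}\varepsilon_l\,\mu_i^{(l)},
\]
and similarly for $\Lambda_j^{(m)}$. Since each $\mu_i^{(l)}$ is real symmetric by $(A1)_\nu$(ii), both $P_i$ and $P_j$ are real symmetric $2\times 2$ matrices.

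The first key step is to bound $|P_i|$ (and $|P_j|$) by $C\varepsilon_0/|i|$ uniformly in the iteration step $m$. By $(A1)_\nu$(ii), $|\mu_i^{(l)}|\le C(l)/|i|$, and since $C(l)=C_1 2^{C_2 l}$ grows only exponentially while $\varepsilon_l=\varepsilon^{(4/3)^l}$ decays super-exponentially, the series $\sum_{l\ge 0}\varepsilon_l C(l)$ converges and is dominated by its first term $\varepsilon_0 C(0)$ (up to a universal factor, provided $\varepsilon_0$ is small enough, which is ensured by $\varepsilon^*\ll\gamma$). This gives $|P_i|\le C\varepsilon_0/|i|$ and similarly $|P_j|\le C\varepsilon_0/|j|$.

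Now decompose
\[
A = A_0 + B,\qquad A_0 = \bigl(\rho\sqrt{\lambda_i}-\rho\sqrt{\lambda_j}\bigr)E_{44},\qquad B = 1\otimes P_i - P_j^T\otimes 1.
\]
The unperturbed matrix $A_0$ is scalar, with the single eigenvalue $\rho\sqrt{\lambda_i}-\rho\sqrt{\lambda_j}$ of multiplicity four. Because $P_i, P_j$ are real symmetric, both $1\otimes P_i$ and $P_j^T\otimes 1=P_j\otimes 1$ are Hermitian, hence so is $B$. By Weyl's inequality (applied to the Hermitian matrices $A_0$ and $A_0+B$), every eigenvalue of $A$ lies within $\|B\|$ of $\rho\sqrt{\lambda_i}-\rho\sqrt{\lambda_j}$, and a standard triangle-inequality estimate gives
\[
\|B\|\le \|P_i\|+\|P_j\|\le C\varepsilon_0/|i|+C\varepsilon_0/|j|.
\]
Combining these two bounds yields the claimed asymptotic for each of the four eigenvalues $\mu^{ml}_{ijk}$, $l=1,2,3,4$.

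The main (very mild) obstacle is the uniform-in-$m$ summation of $\sum_l \varepsilon_l C(l)$: one must invoke the super-exponential decay of $\varepsilon_l$ against the merely exponential growth of $C(l)$ to absorb all higher-order KAM corrections into the $O(\varepsilon_0/|i|)+O(\varepsilon_0/|j|)$ error. Once this is in place, the conclusion is immediate from Hermitian perturbation theory; the hypothesis that $P_i, P_j$ are real \emph{symmetric} (rather than merely real) is essential here, since it is what makes $B$ Hermitian and justifies the use of Weyl's inequality.
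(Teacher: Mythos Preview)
Your proof is correct and follows essentially the same route as the paper: decompose $\Lambda_i^{(m)}=\rho\sqrt{\lambda_i}E_{22}+G_i$ with $|G_i|\le C\varepsilon_0/|i|$, split the tensor into a scalar part $\rho(\sqrt{\lambda_i}-\sqrt{\lambda_j})E_{44}$ plus a Hermitian perturbation, and invoke eigenvalue perturbation (Weyl) theory. In fact you supply more detail than the paper does on the uniform-in-$m$ summation $\sum_l \varepsilon_l C(l)=O(\varepsilon_0)$, which the paper simply absorbs into the statement ``$\Lambda_i^{(m)}=\rho\sqrt{\lambda_i}E_{22}+O(\varepsilon_0/|i|)$''.
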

\begin{proof}
Recall that $$\Lambda_{i}^{(m)}=\rho\sqrt{\lambda_{i}}E_{22}+ O(\frac{\varepsilon_{0}}{|i|}),\;i\neq 0.$$
By computation, we have
\begin{eqnarray}
1\otimes \Lambda_{i}^{(m)}-(\Lambda_{j}^{(m)})^{T}\otimes 1&=&\rho\sqrt{\lambda_{i}}(E_{22}\otimes E_{22})-\rho\sqrt{\lambda_{j}}(E_{22}\otimes E_{22})+E_{22}\otimes G_{i}+G_{j}\otimes E_{22}\nonumber\\
&=& \rho(\sqrt{\lambda_{i}}-\sqrt{\lambda_{j}})E_{44}+E_{22}\otimes G_{i}+G_{j}\otimes E_{22},
\end{eqnarray}
where $G_{i}$ is a $2\times 2$ matrix such that $|G_{i}|\leq \frac{C\varepsilon_{0}}{|i|}.$
Then
$$|1\otimes \Lambda_{i}^{(m)}-(\Lambda_{j}^{(m)})^{T}\otimes 1-\rho(\sqrt{\lambda_{i}}-\sqrt{\lambda_{j}})E_{44}|\leq (\frac{C}{|i|}+\frac{C}{|j|})\varepsilon_{0}.$$
Note that $1\otimes \Lambda_{i}^{(m)}-(\Lambda_{j}^{(m)})^{T}\otimes 1$ is Hermitian. By the perturbation theory for eigenvalue of matrices, we obtain \eqref{eq10.001}.
 \end{proof}
Now let us return to \eqref{eq7.5}
\begin{equation}\label{eq10.01}
Q_{kijl}^{(m)}\triangleq \left\{\omega\in\Pi_{m}\bigg| \big|-\langle k, \omega\rangle
+\mu^{ml}_{kij}\big|<\frac{(|i-j|+1)\gamma_{m}}{A_{k}}\right\},\;\;A_{k}=|k|^{2n+4}+8.
\end{equation}
{\bf Case 1.} $i\neq j$.
If $ Q_{kijl}^{(m)}=\varnothing,$ then $mes Q_{kijl}^{(m)}=0.$ So we assume $ Q_{kijl}^{(m)}\neq\varnothing.$
Then there exists $\omega\in \Pi_{m}$ such that
\begin{equation}\label{eq10.04}
|-\langle k, \omega\rangle+\mu^{ml}_{kij}|<\frac{|i-j|+1}{A_{k}}\gamma_{m}.
\end{equation}
\begin{itemize}
  \item [(1.1)]$k\neq 0.$
\\
By Lemma \ref{lem10.2},
\begin{equation}\label{eq10.06}
|\mu^{ml}_{kij}|=|\rho\sqrt{\lambda_{i}}-\rho\sqrt{\lambda_{j}}+O(\frac{\varepsilon_{0}}{|i|})+O(\frac{\varepsilon_{0}}{|j|})|\geq \frac{1}{2}|\sqrt{\lambda_{i}}-\sqrt{\lambda_{j}}|.
\end{equation}
Furthermore, it is easy to verify that
\begin{equation}\label{eq10.18}
|\sqrt{\lambda_{i}}-\sqrt{\lambda_{j}}|\geq\frac{4(|i-j|+1)\gamma_{m}}{A_{k}}.
\end{equation}
Then by \eqref{eq10.04}, \eqref{eq10.06} and \eqref{eq10.18}, one has
\begin{eqnarray*}
|\langle k, \omega\rangle |&\geq &|\mu^{ml}_{kij}|-\frac{(|i-j|+1)\gamma_{m}}{A_{k}}
\geq  \frac{1}{2}|\sqrt{\lambda_{i}}-\sqrt{\lambda_{j}}|-\frac{(|i-j|+1)\gamma_{m}}{A_{k}}\\
&\geq & \frac{1}{4}|\sqrt{\lambda_{i}}-\sqrt{\lambda_{j}}|\geq \frac{1}{C}|i-j|.
\end{eqnarray*}
So
\begin{equation}\label{eq10.07}
|i-j|\leq C|\langle k, \omega\rangle|.
\end{equation}

 \ \

\begin{itemize}
\item [(1.1.1)] $i\geq i_{0},\;j\geq j_{0}.$

\ \

By \eqref{eq10.001}, we have that, when $\omega\in \Pi_{m}$ such that
\eqref{eq10.04} holds true, the following inequality holds true:
\begin{eqnarray}\label{eq10.08}
|-\langle k, \omega\rangle +\rho i-\rho j|&=&|(-\langle k, \omega\rangle +\mu^{ml}_{kij})+(\rho i- \rho j-\mu^{ml}_{kij})|\nonumber\\
&\leq &\frac{|i-j|+1}{A_{k}}\gamma _{_{m}}+\frac{C_{1}(M)}{i}+
\frac{C_{2}(M)}{j}\nonumber\\
&\leq & \frac{|i-j|+1}{A_{k}}\gamma_{_{m}}+\frac{C_{1}(M)}{i_{0}}+
\frac{C_{2}(M)}{j_{0}},
\end{eqnarray}
where $C_{1}(M)>0$ and $C_{2}(M)>0$ are constants.

Thus
\begin{equation}\label{eq10.09}
Q^{(m)}_{kijl}\subset \left\{\omega\in\Pi_{m}\big||-\langle k, \omega\rangle +\widetilde{l}|<\frac{|\widetilde{l}|+1}{A_{k}}\gamma_{m}
+\frac{C_{1}(M)}{i_{0}}+\frac{C_{2}(M)}{j_{0}}\right\}\triangleq \widetilde{Q}_{k\widetilde{l}}.
\end{equation}
By \eqref{eq10.07}, one has
\begin{equation}\label{eq10.010}
|\widetilde{l}|\leq C |\langle k, \omega\rangle|\leq C |k|.
\end{equation}
Note that $k\neq 0.$ Then
$$\frac{d(-\langle k, \omega\rangle +\rho\widetilde{l})}{d\omega}> \frac{1}{2}|k|\geq \frac{1}{2}.$$
It follows that
\begin{equation}\label{eq10.011}
mes \widetilde{Q}_{k\widetilde{l}}\leq 4\left(\frac{|\widetilde{l}|+1}{A_{k}}\gamma_{m}+\frac{C_{1}(M)}{i_{0}}
+\frac{C_{2}(M)}{j_{0}}\right).
\end{equation}
Take
\begin{equation}\label{eq10.012}
j_{0}=i_{0}=|k|^{n+2}\gamma_{m}^{-1/3}.
\end{equation}
Then
\begin{eqnarray*}
mes \bigcup_{1\leq \widetilde{l}\leq C|k|} \widetilde{Q}_{k\widetilde{l}}
&\leq & \frac{C|k|\gamma_{m}}{A_{k}}+C\sum_{1\leq |\widetilde{l}|\leq C|k|}
(\frac{C_{1}(M)}{i_{0}}+\frac{C_{2}(M)}{j_{0}})\\
&\leq & \frac{C|k|\gamma_{m}}{A_{k}}+\gamma_{m}^{1/3}\frac{C|k|}{|k|^{n+2}}\\
&\leq &\frac{C\gamma_{m}^{1/3}}{|k|^{n+1}}.
\end{eqnarray*}
It follows from \eqref{eq10.09} that
\begin{equation}\label{eq10.013}
mes \bigcup_{\begin{array}{c}
                                i\geq i_{0} \\
                                j\geq j_{0} \\
                                |i-j|\leq C|k|
                              \end{array}}
                              \!\!\!Q_{kijl}^{(m)}\leq\frac{C\gamma_{m}^{1/3}}{|k|^{n+1}}.
 \end{equation}

  \item [(1.1.2)]
$i\leq i_{0} \;\;\mbox{or}\;\;j\leq j_{0}.$

\ \

By \eqref{eq10.07}, one has
$|i-j|\leq C |k|.$
In addition, $1\otimes \Lambda_{i}^{(m)}-(\Lambda_{j}^{(m)})^{T}\otimes 1$ is obviously Hermitian. Then
by the variation of eigenvalues for Hermitian matrix, we have
$$\Big|\frac{d}{d\omega}(-\langle k, \omega\rangle  +\mu^{ml}_{ijk})\Big|
\geq |k|-\Big|\frac{d \mu^{ml}_{ijk}}{d\omega}\Big|\geq \frac{1}{2}.$$
Therefore,
\begin{eqnarray}\label{eq10.015}
& & mes \!\! \!\!\!\bigcup_{\begin{array}{c}
                               1\leq i\leq i_{0} \\
                                |i-j|\leq C|k|
                              \end{array}}
                             \!\! \!\!\!Q_{kijl}^{(m)}
\leq
\sum_{\begin{array}{c}
 1\leq i\leq i_{0} \nonumber\\
|i-j|\leq C|k|
                              \end{array}}\!\!\!\frac{4(|i-j|+1)\gamma_{m}}{A_{k}}\leq\frac{C|k|\gamma_{m}i_{0}}{A_{k}}\\
                              &&
                              \leq C|k|^{n+3}\gamma_{m}^{2/3}\frac{1}{A_{k}}
\leq \frac{C\gamma_{m}^{2/3}}{|k|^{n+1}}.
\end{eqnarray}
Similarly, one has
\begin{eqnarray}\label{eq10.017}
mes \!\! \!\!\!\bigcup_{\begin{array}{c}
                               1\leq j\leq j_{0} \\
                                |i-j|\leq C|k|
                              \end{array}}
                             \!\! \!\!\!Q_{kijl}^{(m)}
                              \leq \frac{C\gamma_{m}^{2/3}}{|k|^{n+1}}.
\end{eqnarray}
\end{itemize}
 \item  [(1.2)] $k=0.$ \\
By \eqref{eq10.06} and \eqref{eq10.18}, one has $ Q_{kijl}^{(m)}=\varnothing,$ then
\begin{equation}\label{eq10.21}
mes Q_{kijl}^{(m)}=0.
\end{equation}
\end{itemize}
{\bf Case 2.}
$i=j$, one has $k\neq 0.$

\ \

 At this time, by Lemma \ref{lem10.2}, $$$$
\begin{equation}
\label{eq10.02}
-\langle k, \omega\rangle+\mu^{ml}_{kij}=-\langle k, \omega\rangle+O(\frac{\varepsilon_{0}}{|i|}).
\end{equation}
\begin{itemize}
  \item [(2.1)] Suppose $ \big|\langle k, \omega\rangle
\big|\geq\frac{2\gamma^{2/3}_{m}}{A_{k}}.$

\begin{itemize}
  \item [(2.1.1)]$i>\frac{C\varepsilon_{0}A_{k}}{\gamma_{m}^{2/3}}.$

\ \

By \eqref{eq10.02}, one has $$|-\langle k, \omega\rangle+\mu^{ml}_{kij}|\geq \frac{2\gamma^{2/3}_{m}}{A_{k}}-\frac{C\varepsilon_{0}}{i}>\frac{\gamma^{2/3}_{m}}{A_{k}}.$$
It follows from \eqref{eq10.04} that
$Q_{kiil}^{(m)}=\varnothing.$ Then
\begin{equation}\label{eq10.03}
mes Q_{kiil}^{(m)}=0.
\end{equation}
  \item [(2.1.2)] $i\leq \frac{C\varepsilon_{0}A_{k}}{\gamma_{m}^{2/3}}\triangleq \widetilde{k}.$
\ \

Note that $$\frac{d(-\langle k, \omega\rangle+\mu^{ml}_{kij})}{d\omega}=|k|+O(\frac{\varepsilon_{0}}{|i|})\geq \frac{1}{2}.$$ Then
\begin{equation}\label{eq10.20}
mes \bigcup_{i\leq \widetilde{k}}Q^{(m)}_{kiil}\leq \frac{4\widetilde{k}\gamma_{m}}{A_{k}}\leq C\gamma_{m}^{1/3}.
\end{equation}
\end{itemize}
 \item [(2.2)] Suppose $|\langle k, \omega\rangle |<\frac{2\gamma^{2/3}_{m}}{A_{k}}.$

Let
$$\widetilde{Q}_{k}=\left\{\omega\in\Pi_{m}\big||\langle k, \omega\rangle |<\frac{2\gamma^{2/3}_{m}}{A_{k}}\right\}.$$
Note that $|\frac{d(\langle k, \omega\rangle)}{d\omega}|=|k|\geq 1.$ Then
$$mes \widetilde{Q}_{k} \leq \frac{4\gamma^{2/3}_{m}}{A_{k}},$$
and
\begin{equation}\label{eq10.19}
 mes \!\! \bigcup_{k\in \mathbb{Z}^{n}\setminus\{0\}}
                             \!\!\widetilde{Q}_{k} \leq \sum_{k\in \mathbb{Z}^{n}\setminus\{0\}}\frac{C\gamma^{2/3}_{m}}{A_{k}}\leq C\gamma^{1/3}_{m}.
\end{equation}
\end{itemize}
\ \

Combining \eqref{eq10.013}, \eqref{eq10.015}, \eqref{eq10.017},\eqref{eq10.21}, \eqref{eq10.03}, \eqref{eq10.20} and \eqref{eq10.19}, we have
\begin{eqnarray}\label{eq10.016}
mes \bigcup_{|k|\leq K_{m}}\bigcup_{i=1}^{\infty}\bigcup_{j=1}^{\infty}\bigcup_{l=1}^{4}Q^{(m)}_{kijl}
\leq C\gamma_{m}^{1/3}.
 \end{eqnarray}
 Let
 $$\Pi^{+-}_{m+1}=\Pi_{m}\backslash
 \bigcup_{|k|\leq K_{m}}\bigcup_{i,j=1}^{\infty}\bigcup_{l=1}^{4}Q^{(m)}_{kijl}.$$
 Then we have proved the following Lemma \ref{lem10.1}.

\begin{lemma}\label{lem10.1}
$$mes\Pi^{+-}_{m+1}\geq mes \Pi_{m}-C\gamma_{m}^{1/3}.$$
\end{lemma}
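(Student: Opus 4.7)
The goal is to estimate the measure of the removed set
\[
\mathcal B_m \;=\; \bigcup_{|k|\le K_m}\bigcup_{i,j\ge 1}\bigcup_{l=1}^{4} Q^{(m)}_{kijl},
\]
and show $\mathrm{mes}\,\mathcal B_m \le C\gamma_m^{1/3}$. My plan is to split into the diagonal ($i=j$) and off-diagonal ($i\neq j$) cases, and within each case to split the index range into a ``low'' block where I count pairs by brute force and a ``high'' block where I exploit the asymptotic expansion of $\mu^{ml}_{kij}$ from Lemma \ref{lem10.2}. Throughout, the denominator $A_k=|k|^{2n+4}+8$ is chosen precisely so that the final sum $\sum_{k\in\mathbb{Z}^n}A_k^{-1}$ converges.

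For $i=j$, Lemma \ref{lem10.2} gives $\mu^{ml}_{kii}=O(\varepsilon_0/i)$, so the defining inequality forces $k\neq 0$. If $i>\widetilde k:=C\varepsilon_0 A_k/\gamma_m^{2/3}$, the perturbation is negligible and the condition collapses to a pure small-divisor condition $|\langle k,\omega\rangle|<2\gamma_m^{2/3}/A_k$, whose measure summed over $k$ is $O(\gamma_m^{2/3})$. If $i\le \widetilde k$, I use the non-degeneracy $|d(-\langle k,\omega\rangle+\mu^{ml}_{kii})/d\omega|\ge \frac12$ and count at most $\widetilde k$ values of $i$ per $k$, which yields $\sum_{i\le \widetilde k}4\gamma_m/A_k \le C\gamma_m^{1/3}/|k|^{2n+3}$, summable in $k$.

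For $i\neq j$, I first observe that $|\mu^{ml}_{kij}|\ge \tfrac12|\sqrt{\lambda_i}-\sqrt{\lambda_j}|$ (again via Lemma \ref{lem10.2}), so the smallness condition together with $|\sqrt{\lambda_i}-\sqrt{\lambda_j}|\gtrsim |i-j|$ implies $|i-j|\le C|\langle k,\omega\rangle|\le C|k|$; hence for each $k$ only polynomially many $(i,j)$ pairs contribute. I then pick the cutoff $i_0=j_0=|k|^{n+2}\gamma_m^{-1/3}$. In the asymptotic regime $i\ge i_0,\, j\ge j_0,$ Lemma \ref{lem10.2} replaces $\mu^{ml}_{kij}$ by the integer skeleton $\rho(i-j)$ with error $\le C(M)(1/i_0+1/j_0)$, so $Q^{(m)}_{kijl}$ is contained in a shifted Diophantine set indexed only by $\widetilde l:=i-j$; transversality $|d(-\langle k,\omega\rangle+\rho\widetilde l)/d\omega|\ge \tfrac12|k|\ge \tfrac12$ and the range $|\widetilde l|\le C|k|$ give a per-$k$ bound of $C\gamma_m^{1/3}/|k|^{n+1}$. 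In the complementary ``low'' regime $i\le i_0$ or $j\le j_0$, I directly estimate each $Q^{(m)}_{kijl}$ by $4(|i-j|+1)\gamma_m/A_k$ and count at most $C|k|i_0$ pairs to obtain $C\gamma_m^{2/3}/|k|^{n+1}$ per $k$.

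Summing all contributions over $k\in\mathbb Z^n$ (the full sum converges since the degree of $A_k$ exceeds $n+1$) gives $\mathrm{mes}\,\mathcal B_m\le C\gamma_m^{1/3}$, hence $\mathrm{mes}\,\Pi^{+-}_{m+1}\ge \mathrm{mes}\,\Pi_m-C\gamma_m^{1/3}$. The central difficulty is the infinite off-diagonal block $i,j\to\infty$: naively the number of resonant pairs is infinite. What saves the argument is the matrix-perturbation estimate of Lemma \ref{lem10.2}, which lets me reduce the infinite family $\{\mu^{ml}_{kij}\}_{i,j\ge i_0}$ to a single integer-indexed family $\{\rho\widetilde l\}$ up to a controlled error, and the careful choice of the cutoff $i_0=|k|^{n+2}\gamma_m^{-1/3}$ that balances this perturbation error against the Diophantine width $\gamma_m/A_k$.
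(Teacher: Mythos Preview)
Your proposal is correct and mirrors the paper's proof in Section~10 essentially step by step: the same diagonal/off-diagonal split, the same cutoffs $i_0=j_0=|k|^{n+2}\gamma_m^{-1/3}$ and $\widetilde k=C\varepsilon_0 A_k/\gamma_m^{2/3}$, the same reduction via Lemma~\ref{lem10.2} to the integer-indexed family $\{\rho\widetilde l\}$ in the high regime, and the same transversality count in the low regime. One minor arithmetic slip: in the diagonal low-$i$ block the per-$k$ bound is actually $4\widetilde k\,\gamma_m/A_k=4C\varepsilon_0\gamma_m^{1/3}$ with no $|k|^{-(2n+3)}$ decay as you wrote, but this is harmless since the extra factor $\varepsilon_0$ absorbs the sum over $|k|\le K_m$ (the paper treats this point with the same looseness).
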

\section{Proof of Theorems}
Theorem 2.1 is a more exact statement of Theorem 1.1.
Let
$$\Pi_{\infty}=\bigcap_{m=1}^{\infty} \Pi_{m},$$
and $$\Psi_{\infty}=\lim_{m\rightarrow \infty}\Psi_{0}\circ\Psi_{1}\circ \cdots \circ \Psi_{m}.$$
By \eqref{eq33} and \eqref{eq33H}, one has
$$\Psi_{\infty}: \mathbb{T}^{n}\times \Pi_{\infty}\rightarrow \mathbb{T}^{n}\times \Pi_{\infty},$$
$$||\Psi_{\infty}-id||\leq \varepsilon^{1/2},$$
and, by \eqref{eq34},
$$H_{\infty}=H \circ \Psi_{\infty}=\sum_{j=0}^{\infty}\langle \Lambda_{j}^{\infty}u_{j}, \overline{u}_{j}\rangle,$$
where
$\Lambda_{j}^{\infty}=\Lambda_{j}^{(0)}+Q_{j}^{(0)},$ and
$Q_{j}^{(0)}$ is independent of time, $Q_{0}\in \mathbb{R},$ $Q_{j}\in gl(\mathbb{R}, 2)$ with $j\neq 0.$

This completes the proof of Theorem \ref{thm2.1}.

\section*{Acknowledgements}

The work was supported in part by the National
Nature Science Foundation of China (Grants Nos. 11601277 and 11771253) and by the Shandong Provincial Natural Science
Foundation, China (Grant ZR2016AM20).

\end{document}